\let\NAT@parse\undefined
\theoremstyle{plain}
\newtheorem{thm}{Theorem}
\newtheorem{lem}{Lemma}
\newtheorem{prop}{Proposition}
\newtheorem{coro}{Corollary}
\theoremstyle{definition}
\newtheorem{defn}{Definition}
\begin{document}
	\title{The supercritical deformed Hermitian Yang--Mills equation on compact projective manifolds}
	\author{Aashirwad Ballal}
	\email{aashirwadb@iisc.ac.in}
	\maketitle
	\begin{abstract}
		In this paper, we extend a result of \cite{gao} regarding the solvability of the twisted deformed Hermitian Yang-Mills equations on compact K\"ahler manifolds to allow for the twisting function to be non-constant and slightly negative in all dimensions. Using this result along with the methods in \cite{datpin20}, we prove that the twisted dHYM equation on compact, projective manifolds can be solved provided certain numerical conditions are satisfied. As a corollary, we obtain a new proof in the projective case of a recent theorem of \cite{taka} addressing a conjecture of \cite{cjy}.
	\end{abstract}
	 
	\section{Introduction}
	
	Let $ M  $ be a compact, connected K\"ahler manifold of complex dimension $ n $ with a K\"ahler form  $  \chi  $ and let $  \omega_0 $ be a smooth, real $ (1,1)$-form on $ M $. Any other such $  \omega  $ in the same cohomology class as $  \omega_0 $ can be written as $ \omega =  \omega_0  +   \sqrt{-1} \partial \bar{\partial} \phi $ for some smooth function $ \phi $ and we will denote by $ \lambda_1(x), \dots, \lambda_n(x) $ the eigenvalues of $  \omega  $ with respect to the metric $ \chi $ at a point $ x \in M $. The deformed Hermitian Yang-Mills equation can then be written as 
	\begin{equation} \label{eq:dhyme}
		\sum_{i = 1}^{n} \text{arccot}(\lambda_i(x)) = \theta
	\end{equation}
	where $ \theta $ is a real constant called the ``phase angle" and the range of $ \text{arccot} $ is chosen to be $ (0, \pi) $. The connections of the dHYM equation to string theory and mirror symmetry are discussed in \cite{cxy}. Necessarily, we must have $ \theta \in (0, n \pi) $. When $ \theta \in (0, \pi) $ the phase is said to be supercritical and this is the case we consider in this paper. \\

	When the phase is supercritical, it can be shown that (see the next section) solving the dHYM equation is equivalent to finding an $ \omega \in [\omega_0] $ such that 
	\begin{equation}
	 \label{leveln}
	 \Re(\omega +  \sqrt{-1} \chi )^n - \cot(\theta) \Im(\omega +  \sqrt{-1} \chi)^{n} = 0
	\end{equation}and
	\begin{equation}
	\label{levelsubn}
	 \Re(\omega +  \sqrt{-1} \chi )^k - \cot(\theta) \Im(\omega +  \sqrt{-1} \chi)^k > 0
	\end{equation}where $ k = 0, 1, \dots, n-1 $. In fact, the positivity condition \ref{levelsubn} alone is sufficient, assuming the integral of the left side of equation \ref{leveln} is zero (\cite{cjy}). More generally, \cite{gao} proved the following result:
	
	\begin{prop}{\cite[Proposition~5.5]{gao}}
		\label{prop:gctwisted}
		Let $ (M, \chi) $ be a compact K\"ahler manifold, $ 0 < \theta < \Theta < \pi $ be constants, and $ f $ a smooth function on $ M $. Assume that there exists a smooth $  \omega_0 $ satisfying $ \omega_0 \in \Gamma_{\chi, \theta, \Theta} $ (for this notation, see the next section). Suppose that
		\begin{equation*}
			\int_M \Re(\omega_0 +  \sqrt{-1} \chi )^n - \cot(\theta) \Im(\omega_0 +  \sqrt{-1} \chi)^{n} = \int_M f \chi^{n} \geq 0
		\end{equation*}
		Then there exists a constant $ \epsilon > 0 $ depending only on $ n, \theta, \Theta $ such that if 
		\begin{equation*}
			\begin{split}
			 n \geq 4  \text{ and }  f > -\epsilon , \text{ or} \\
			 n \leq 3  \text{ and }  f \geq 0  \text{ is constant} \\
			\end{split}
		\end{equation*} there exists a smooth $ \omega \in [\omega_0] $ with $ \omega \in \Gamma_{\chi, \theta, \Theta} $ and 
		\begin{equation*}
		 \Re(\omega +  \sqrt{-1} \chi )^n - \cot(\theta) \Im(\omega +  \sqrt{-1} \chi)^{n} = f \chi^{n}
		\end{equation*}
	\end{prop}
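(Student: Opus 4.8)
The plan is to solve the equation by the method of continuity. Writing $\bar f := \big(\int_M \chi^n\big)^{-1}\int_M f\,\chi^n \ge 0$ for the average, I would connect the target twist $f_1 = f$ to the constant twist $f_0 = \bar f$ along $f_t := (1-t)\bar f + t f$. This path keeps the cohomological identity $\int_M\big(\Re(\omega+\sqrt{-1}\chi)^n - \cot(\theta)\,\Im(\omega+\sqrt{-1}\chi)^n\big) = \int_M f_t\,\chi^n$ valid for every $t$ (the left side being a topological invariant of $[\omega_0]$), and, since $\bar f \ge 0$, it keeps $f_t > -\epsilon$ whenever $f > -\epsilon$, while $f_t \equiv \bar f$ whenever $f$ is constant. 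Let $S \subseteq [0,1]$ be the set of $t$ for which there is a smooth $\phi_t$ with $\omega_t := \omega_0 + \sqrt{-1}\partial\bar\partial\phi_t \in \Gamma_{\chi,\theta,\Theta}$ solving the equation with right-hand side $f_t\,\chi^n$; the goal is $1 \in S$. To see $0 \in S$ one must solve the constant-twist equation $\Re(\omega+\sqrt{-1}\chi)^n - \cot(\theta)\,\Im(\omega+\sqrt{-1}\chi)^n = \bar f\,\chi^n$ inside $[\omega_0]\cap\Gamma_{\chi,\theta,\Theta}$: when $n \le 3$ this is the whole assertion (the path is constant) and I would invoke the low-dimensional techniques of \cite{cjy}; when $n \ge 4$ it is the constant-twist case, reached by a preliminary continuity argument off the untwisted ($\bar f = 0$) dHYM solution using the a priori estimates below.

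Openness of $S$ is routine. At a solution $\omega_t \in \Gamma_{\chi,\theta,\Theta}$ the linearization of the operator $\omega \mapsto \Re(\omega+\sqrt{-1}\chi)^n - \cot(\theta)\,\Im(\omega+\sqrt{-1}\chi)^n$ is a second-order elliptic operator with no zeroth-order term — ellipticity and concavity hold precisely because $\omega_t$ lies in the cone — so its kernel is the constants and its cokernel is one-dimensional, matched by the fixed integral constraint. The implicit function theorem in $C^{2,\alpha}$ then produces solutions for $t'$ near $t$, and these stay in $\Gamma_{\chi,\theta,\Theta}$ because that cone is open.

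The heart of the matter is closedness, i.e. uniform a priori bounds for solutions $\omega_t \in \Gamma_{\chi,\theta,\Theta}$, from which $C^\infty$ control follows. In order: (i) a $C^0$ estimate for $\phi_t$, normalized by $\sup_M \phi_t = 0$, by Moser iteration or an Alexandrov--Bakelman--Pucci argument — this step works cleanly for $f \ge 0$, survives a perturbation to $f > -\epsilon$, and the admissible constant $\epsilon = \epsilon(n,\theta,\Theta) > 0$, together with the dimensional dichotomy (oscillation of $f$ can be absorbed for $n \ge 4$, but one is forced to constant $f$ for $n \le 3$), is produced exactly here; (ii) a uniform second-order estimate $\sup_M |\sqrt{-1}\partial\bar\partial\phi_t|_\chi \le C$, via the maximum-principle argument of Sz\'ekelyhidi and Hou--Ma--Wu using concavity of the operator and the $\mathcal{C}$-subsolution $\omega_0$ (which one checks remains a subsolution for the slightly twisted equation), and which simultaneously confines the eigenvalues to the region cut out by $\Gamma_{\chi,\theta,\Theta}$ so that the limiting form lies in the closed cone — and in the open cone because $\theta < \Theta$ is strict; (iii) the Evans--Krylov $C^{2,\alpha}$ estimate and Schauder bootstrapping to $C^\infty$. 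With $0 \in S$, openness, and closedness in place, $S = [0,1]$, which yields the claimed $\omega \in [\omega_0] \cap \Gamma_{\chi,\theta,\Theta}$.

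The main obstacle I anticipate is step (i) in quantitative form: extracting an honest, dimension-dependent slack $\epsilon$ in the sign hypothesis on $f$ out of the zeroth-order estimate, and, hand in hand with it, establishing solvability of the base constant-twist problem in dimensions $n \ge 4$. The split between $n \ge 4$ and $n \le 3$ in the statement is precisely the signature of how far this estimate can be pushed, and it is this threshold that the present paper's extension aims to move.
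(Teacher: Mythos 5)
This proposition is not proved in the paper at all: it is quoted verbatim from \cite[Proposition~5.5]{gao}, and the closest in-house analogue is Theorem \ref{mainpdethm} together with its proof in Section 3, which extends precisely the $n\le 3$ case. Measured against either Chen's proof or that section, your sketch has two genuine gaps. First, your continuity path in the twist alone, $f_t=(1-t)\bar f+tf$, presupposes solvability at $t=0$ of the constant-twist equation in $[\omega_0]\cap\Gamma_{\chi,\theta,\Theta}$. You propose to reach this ``off the untwisted dHYM solution'', but under the stated hypotheses no untwisted solution need exist (the integral is only $\ge 0$, not $0$), and even when the integral vanishes, producing a dHYM solution from the pointwise condition $\omega_0\in\Gamma_{\chi,\theta,\Theta}$ is itself the main existence theorem of \cite{cjy}/\cite{gao}, established by exactly the kind of continuity-plus-estimates argument you are trying to assemble; so your base case is circular, or at best deferred. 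The device actually used (both in \cite{gao} and in the family \eqref{maincont} here) is to deform through $tf+d_t$ with $d_t$ a constant fixed by the cohomological normalization, so that at $t=0$ the twist is a nonnegative \emph{constant} and the known constant-twist results can be invoked there.

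Second, you locate the production of $\epsilon(n,\theta,\Theta)$ and the $n\ge4$ versus $n\le3$ dichotomy in the $C^0$ estimate, and then outsource the second-order bound to the standard $\mathcal{C}$-subsolution machinery of Sz\'ekelyhidi and Hou--Ma--Wu. That is where the proposal would actually fail. With $f$ nonconstant and somewhere negative, $G^n_\theta(\omega,\chi)=f\chi^n$ is not a prescribed-phase equation with the usual concavity structure on the relevant set (where $f<0$ the solution leaves the branch $\sum_i\operatorname{arccot}\lambda_i\le\theta$), and differentiating the equation produces third-order terms weighted by $df$ that the off-the-shelf subsolution argument does not absorb; controlling them with constants depending only on $(n,\theta,\Theta)$ is exactly the delicate part --- compare Proposition \ref{Lapbound} and Lemma \ref{discpos}, where a convexity/discriminant analysis of a quadratic in the third derivatives is required and where the admissible negativity is tied to quantities like $\csc\theta-|\cot\theta|$ and the cone inequalities, not to the zeroth-order bound. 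The $C^0$ estimate, by contrast, goes through by the Blocki--Pingali argument essentially independently of the sign of $f$ (Proposition \ref{C0bound}). Likewise, your final claim that the limit stays in $\Gamma_{\chi,\theta,\Theta}$ ``because $\theta<\Theta$ is strict'' needs the quantitative sine argument given at the end of Section 3, which is where $\epsilon$ is again constrained. So while your overall architecture (continuity, openness by ellipticity, closedness by a priori estimates, Evans--Krylov) matches the shape of the true proof, the two steps carrying the real content --- the base point of the deformation and the second-order estimate with quantitative slack in $f$ --- are missing.
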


	The ``twisted" dHYM equation is 
	\begin{equation*}
		\Re(\omega +  \sqrt{-1} \chi )^n - \cot(\theta) \Im(\omega +  \sqrt{-1} \chi)^{n} = f \chi^{n} 
	\end{equation*}When the dimension $ n $ of the K\"ahler manifold is greater than $ 3 $, the above proposition gives conditions for the solvability of the twisted dHYM equation when $ f $ is allowed to be ``slightly negative" whereas in dimensions less than $ 4 $, $ f $ is assumed to be non-negative and constant. The first main result of this paper is to extend this theorem to allow a slightly negative and non-constant $ f $ when $ n < 4 $, and in particular, for $ n = 3 $.
	
	\begin{thm}[Twisted dHYM in dimensions $ < $ 4]
		\label{mainpdethm}
		Let $ (M, \chi)$ be a compact K\"ahler manifold of dimension $ n $, $ 0 < \theta < \pi $ a constant, $ f $ a smooth function on $ M $, and $ \omega_0 $ a smooth form in $ \Gamma_{\chi, \theta} $. There exists a constant $ \epsilon > 0 $ depending only on $ n, \theta $ such that if the following conditions hold:
		\begin{equation*}
		\begin{split}
		\int_M \Re(\omega_0 +  \sqrt{-1} \chi )^n - \cot(\theta) \Im(\omega_0 +  \sqrt{-1} \chi)^{n} = \int_M f \chi^{n} \geq 0 \\
		f > -\epsilon
		\end{split}		
		\end{equation*}
		then there exists a smooth $ \omega \in [\omega_0] $ with $ \omega \in \Gamma_{\chi, \theta} $ and 
		\begin{equation*}
		\Re(\omega +  \sqrt{-1} \chi )^n - \cot(\theta) \Im(\omega +  \sqrt{-1} \chi)^{n} = f \chi^{n}
		\end{equation*}
		Further, if $\Theta \in (\theta,  \pi) $ is another constant then $ \epsilon > 0 $ can be chosen (depending also on $ \Theta $ now) such that if the above conditions are satisfied, then the solution $ \omega $ is also in $ \Gamma_{\chi, \theta, \Theta} $.
	\end{thm}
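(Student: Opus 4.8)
The plan is to solve the equation by the continuity method, following the broad strategy of \cite{gao} but arranging the argument so that the restriction $n\ge 4$ plays no role and the subsolution hypothesis is only $\omega_0\in\Gamma_{\chi,\theta}$. Put $\hat f:=\big(\int_M f\,\chi^n\big)\big/\big(\int_M\chi^n\big)$, a nonnegative constant by assumption, and for $t\in[0,1]$ consider
\begin{equation*}
 \Re(\omega_t+\sqrt{-1}\chi)^n-\cot(\theta)\,\Im(\omega_t+\sqrt{-1}\chi)^n=f_t\,\chi^n,\qquad f_t:=(1-t)\hat f+t f .
\end{equation*}
Every $f_t$ has the same integral as $f$, so the normalisation in the statement is preserved along the path, and $f_t\ge\min(0,\inf_M f)>-\epsilon$. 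At $t=0$ the right hand side is the nonnegative constant $\hat f$, and a solution $\omega_{(0)}\in\Gamma_{\chi,\theta}$ exists: $\omega_0\in\Gamma_{\chi,\theta}$ unwinds (see the next section, or inequality \eqref{levelsubn} at $k=n-1$) to the $\mathcal C$-subsolution condition $\sum_{j\ne i}\text{arccot}(\lambda_j(x))<\theta$ for all $i$ and $x$, which is exactly the hypothesis under which the existence theory of \cite{cjy} applies — it is literally their theorem when $\hat f=0$ (the case $\theta=\arg\int_M(\omega_0+\sqrt{-1}\chi)^n$, which is what the integral identity then says), and a routine modification of the same methods when $\hat f>0$. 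Let $S$ be the set of $t\in[0,1]$ for which the $t$-th equation admits a solution $\omega_t\in\Gamma_{\chi,\theta}$; then $0\in S$ and it remains to prove $S$ is open and closed.

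For openness I would differentiate. Writing the left hand side as $G(\omega)\,\chi^n$, in an eigenbasis of $\omega$ with respect to $\chi$ one has $G=r\,\sin(\theta-\Theta_\omega)/\sin\theta$ with $r=\prod_i\sqrt{1+\lambda_i^2}$ and $\Theta_\omega=\sum_i\text{arccot}(\lambda_i)$, and $\partial G/\partial\lambda_i=\big(r_i/\sin\theta\big)\sin(\theta-\sigma_i)$ where $r_i=\prod_{j\ne i}\sqrt{1+\lambda_j^2}\ge 1$ and $\sigma_i=\sum_{j\ne i}\text{arccot}(\lambda_j)$; this is strictly positive precisely because $\omega_t\in\Gamma_{\chi,\theta}$, the operator carries no zeroth order term, so its linearisation is (degenerate) elliptic and invertible modulo constants, and the implicit function theorem in Hölder spaces applies. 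For closedness I would invoke the a priori estimate machinery for the supercritical dHYM: a $C^0$ bound, then the interior second order estimate for equations of this type in the $\mathcal C$-subsolution formalism (\cite{cjy}) driven by the \emph{fixed} subsolution $\omega_0$, then $C^{2,\alpha}$ by Evans--Krylov, then bootstrapping, all with dimension-independent constants. The structural point is that $\omega_0\in\Gamma_{\chi,\theta}$ is a $\mathcal C$-subsolution for \emph{every} member of the family, whatever $f_t$ is (sending $\lambda_i\to+\infty$ drives $G\to+\infty$ once $\sigma_i<\theta$), so the estimates are uniform in $t$; they moreover confine the eigenvalues of $\omega_t$ to a fixed compact subset of $\{\sigma_i<\theta\ \forall i\}$, i.e. keep $\omega_t\in\Gamma_{\chi,\theta}$. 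Finally the equation controls the phase from above: from $r\,\sin(\theta-\Theta_{\omega_t})=f_t\sin\theta$ and $r\ge 1$, the bound $f_t>-\epsilon$ gives $\Theta_{\omega_t}<\theta+C(n,\theta)\,\epsilon$ pointwise; taking $\epsilon$ small enough that $\theta+C\epsilon<\pi$ keeps us on the correct branch throughout, and taking $\epsilon$ small compared with $\Theta-\theta$ gives $\Theta_{\omega_t}<\Theta$, hence $\omega_t\in\Gamma_{\chi,\theta,\Theta}$, which is the last sentence of the theorem.

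The step I expect to be the main obstacle is the uniform second order estimate for a \emph{non-constant, slightly negative} twist in dimension $n\le 3$: one must absorb the extra terms in $\nabla f_t$ and $\nabla^2 f_t$ and, more delicately, check that the concavity of the level sets $\{G=f_t\}$ used in both the $C^2$ estimate and the Evans--Krylov step persists under the perturbation — the quantitative version of this is what pins down $\epsilon$ in terms of $n$ and $\theta$ (and of $\Theta$ for the refined statement), and it is exactly here that the hypothesis $n\ge 4$ of \cite[Proposition~5.5]{gao} turns out to be superfluous once the argument is routed through the $\mathcal C$-subsolution estimates rather than through the auxiliary construction used there. For completeness, $n=1$ is elementary — the equation reduces to $\lambda_1=f+\cot(\theta)$, solved by $\omega=(f+\cot(\theta))\chi$, whose solvability is precisely the integral identity in the hypothesis — so the genuinely new content is the cases $n=2,3$, and in particular $n=3$.
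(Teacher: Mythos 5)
There is a genuine gap, and it sits exactly where the theorem's actual content lies. Your continuity path $f_t=(1-t)\hat f+tf$ and your openness/endpoint discussion are fine in outline (the $t=0$ endpoint with constant $\hat f\ge 0$ is the $n\le 3$ case of Proposition \ref{prop:gctwisted}, i.e.\ \cite{gao}, rather than literally \cite{cjy}, but that is minor, and the paper's own path \eqref{maincont} is of the same nature). The problem is closedness: you assert that the uniform second order estimate and the $C^{2,\alpha}$ estimate follow from "the interior second order estimate \ldots in the $\mathcal{C}$-subsolution formalism (\cite{cjy}) \ldots then $C^{2,\alpha}$ by Evans--Krylov, all with dimension-independent constants." That machinery does not apply off the shelf here. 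The CJY/Székelyhidi estimates for supercritical dHYM rest on the concavity of the operator $\sum_i\mathrm{arccot}(\lambda_i)$ (equivalently, convexity of the relevant level/sub-level sets), and the twisted equation with a \emph{non-constant, somewhere negative} $f$ is precisely the regime where this structure fails: at points with $f<0$ one has $\sum_i\theta_i>\theta$, the polynomial operator $G^n_\theta$ is not concave on $\Gamma_{\chi,\theta}$, and its negative level sets need not be convex. You yourself flag this ("the step I expect to be the main obstacle \ldots the quantitative version of this is what pins down $\epsilon$"), but that flagged step \emph{is} the theorem; asserting that routing through $\mathcal{C}$-subsolutions makes Chen's $n\ge 4$ hypothesis "superfluous" is unsupported, because the $n\ge4$ hypothesis in \cite{gao} is not an artifact of an auxiliary construction but of exactly this loss of concavity in low dimensions. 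In particular you never produce the constant $\epsilon(n,\theta)$, whereas any correct argument must exhibit how the smallness of $|f|$ restores the needed structure.

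For comparison, the paper's $n=3$ proof does this by hand: it differentiates $F(z_t(x),A(x))=\sin^2\theta$ twice, organizes the third-order terms coming from $\nabla f$ into a quadratic in $v_1,v_2$ (and $w_1,w_2$) after eliminating $v_3,w_3$ via the once-differentiated equation, and proves that quadratic is convex and bounded below via the discriminant positivity of Lemma \ref{discpos} --- which uses precisely $|z_t|<\csc\theta$, i.e.\ $\epsilon=\csc\theta-|\cot\theta|$; this yields Proposition \ref{Lapbound}. Moreover the $C^{2,\alpha}$ step is \emph{not} Evans--Krylov (no concavity is available): it is a Liouville-type blow-up argument (Proposition \ref{holder}, using Lemma 4.1 of \cite{pin3}), whose applicability again has to be checked against the perturbed structure. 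Your closing upgrade to $\Gamma_{\chi,\theta,\Theta}$ is morally the paper's argument (with $\epsilon=\sin(\tfrac{\Theta-\theta}{2})/\sin\theta$), though the passage from $\sin(\theta-\Theta_{\omega})>-\epsilon\sin\theta$ to an angle bound needs the branch control ($\theta-\Theta_\omega>-\pi/2$, with a separate $n=2$, $\theta>\pi/2$ case) that the paper spells out. So: right skeleton, but the a priori estimates that constitute the new content are missing, and the cited general theory cannot supply them.
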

	
	Taking $ f = 0 $ in Proposition \ref{prop:gctwisted} above shows the equivalence of solving the dHYM equation with the existence of a smooth form $ \omega_1 \in [\omega_0]  $ satisfying certain point-wise positivity conditions. However, one can do better, and in fact, it was also shown in \cite{gao} that these point-wise positivity conditions are also equivalent to certain seemingly weaker ``numerical" conditions similar to uniform versions of the ones in \cite[Theorem~4.2]{dempaun}. Recently, these numerical conditions were further weakened to their non-uniform versions in \cite{taka} using the methods in \cite{song} and \cite{gao} and consequently \cite[Conjecture~1.4]{cjy} was proved in the projective case. In this paper, we will provide a proof of an analogue of this conjecture in the projective case for the twisted dHYM equation using the methods of \cite{datpin20} and \cite{gao}. To do this, we first prove 
	\begin{thm}
		\label{maindhymthm}
		Suppose $ M $, $ \chi $, $ \omega_0 $ are as above. Assume that $ M $ is projective, $ \theta \in (0, \pi) $,
		\begin{equation*}
			\int_M \Re(\omega_0 +  \sqrt{-1} \chi )^n - \cot(\theta) \Im(\omega_0 +  \sqrt{-1} \chi)^{n} \geq 0
		\end{equation*} and
		\begin{equation*}
			\int_V \Re(\omega_0 +  \sqrt{-1} \chi )^k - \cot(\theta) \Im(\omega_0 +  \sqrt{-1} \chi)^{k} > 0
		\end{equation*}for every $ k $-dimensional subvariety $ V $ of $ M $ where $ k = 0, 1, \dots, n - 1 $. Then there exists a smooth $ \omega \in [\omega_0] $ with $ \omega \in \Gamma_{\chi, \theta} $.
	\end{thm}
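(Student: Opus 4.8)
The plan is to reduce the numerical hypotheses to the pointwise positivity conditions \ref{levelsubn} by an induction on dimension, in the spirit of the mass-concentration / resolution argument of \cite{datpin20} (which treats the analogous problem for the $J$-equation and the Monge–Ampère equation), and then invoke Proposition \ref{prop:gctwisted} with $f = 0$ (equivalently Theorem \ref{mainpdethm}) to produce the honest solution $\omega \in \Gamma_{\chi,\theta}$. Concretely, set $\Omega_0 = \omega_0 + \sqrt{-1}\chi$ and for a real $(1,1)$-class (or form) $\alpha$ write $P_k(\alpha) = \Re(\alpha + \sqrt{-1}\chi)^k - \cot(\theta)\Im(\alpha + \sqrt{-1}\chi)^k$; the hypotheses say $\int_M P_n([\omega_0]) \ge 0$ and $\int_V P_k([\omega_0]) > 0$ for every irreducible $k$-dimensional subvariety $V$, $0 \le k \le n-1$. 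What we must produce is a representative $\omega \in [\omega_0]$ with $P_k(\omega) > 0$ pointwise for $k = 0, \dots, n-1$ (the case $k=0$ being vacuous, $k=1$ being $\sin\theta \cdot \omega + \text{(lower order)} > 0$ appropriately normalized), since by Theorem \ref{mainpdethm} this suffices.

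The key steps, in order: (1) \emph{Reformulate in terms of a cone/positivity condition.} Observe that $P_k(\omega) > 0$ pointwise for all $k \le n-1$ is exactly the condition $\omega \in \Gamma_{\chi,\theta}$; so the theorem is the statement that the numerical conditions imply the $\Gamma_{\chi,\theta}$ cone (at the level of a representative) is nonempty. (2) \emph{Set up the induction on $n$.} For $n = 1$ there is nothing to prove beyond the phase constraint. Assume the result in all dimensions $< n$. (3) \emph{Use projectivity to run a Demailly–Păun / mass-concentration argument.} Following \cite{datpin20}, consider the supremum of $t$ such that the class $[\omega_0] - t\{\text{ample}\}$ (or the relevant deformation keeping track of the twist by $\chi$) still satisfies the numerical inequalities; if the pointwise positivity failed one could concentrate mass of a closed positive current on a proper subvariety $V$, contradicting the strict numerical inequality $\int_V P_k > 0$ together with the inductive hypothesis applied to $V$ (after restricting $\chi$ and $[\omega_0]$ to $V$, using a resolution of singularities of $V$ to stay on a smooth projective manifold). (4) \emph{Produce the smooth representative and close up.} Once pointwise positivity of a representative is established, apply Theorem \ref{mainpdethm} with $f \equiv 0$ (whose integral-zero hypothesis is exactly $\int_M P_n([\omega_0]) \ge 0$, and indeed here $= 0$ after the equation is solved) to obtain a smooth $\omega \in [\omega_0] \cap \Gamma_{\chi,\theta}$ solving the dHYM equation, which is stronger than what is asserted.

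The main obstacle I expect is step (3): making the mass-concentration argument of \cite{datpin20} go through for the combination $P_k(\omega) = \Re\Omega^k - \cot\theta\, \Im\Omega^k$ rather than a single wedge power. This quantity is not a power of a positive form, so one needs to check that it behaves well under the cone operations — in particular that positivity of $P_k$ defines a genuinely open convex cone in the space of forms (this is where the supercritical hypothesis $\theta \in (0,\pi)$, and implicitly the structure of $\Gamma_{\chi,\theta}$ from the next section, is essential) and that it is preserved under the weak limits and restrictions used in the induction. A secondary technical point is the singular-variety restriction: one must pull $\chi$ and $\omega_0$ back under a resolution $\tilde V \to V$ and verify the hypotheses are inherited (the integrals are birational invariants, but $\chi$ only pulls back to a semipositive big class, so one perturbs by a small ample class and takes a limit, absorbing the perturbation using the strictness of the inequalities). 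Apart from these, the argument is a fairly mechanical adaptation of \cite{datpin20} combined with Theorem \ref{mainpdethm}.
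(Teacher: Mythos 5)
Your overall skeleton (induction on dimension, projectivity, a mass-concentration argument in the style of \cite{datpin20}, resolution of singularities, and an appeal to the PDE results) matches the paper's strategy, but the step that carries all the weight -- your step (3) -- is essentially a placeholder, and the sketch you give of it points in the wrong direction. The paper does not argue by contradiction (``if pointwise positivity failed one could concentrate mass of a current on a subvariety, contradicting $\int_V P_k>0$''). It argues constructively: for each $t>0$ the class $[\omega_0]+t[\eta]$, with $\eta$ the curvature of an ample bundle, satisfies the \emph{uniform} numerical conditions of \cite[Proposition~5.2]{gao}, which already yields smooth metrics satisfying the cone condition in the perturbed classes; then one runs a concentration-of-mass family of \emph{twisted} dHYM equations $P^n_\theta(\Omega_t,\chi)=\chi_t^n-\chi^n+A_t\chi^n$, with $\chi_t$ concentrating along a divisor $Y\in|L|$, to produce (Theorem \ref{concofmass}) a positive current $\Theta\in[\Omega_0]$ with $\Theta\ge 2\beta[Y]$ that satisfies the cone condition only in the weak sense of Definition \ref{def:conecurrent}; and finally one upgrades this to a smooth representative via the regularization-and-gluing Lemma \ref{regandglue}, which requires a smooth cone-condition metric on a neighborhood of $E_c(T)\cup Y$, supplied by Proposition \ref{mainnbdcone} through induction on the dimension of the subvariety, resolution of singularities, and the perturbation results (Lemma \ref{lem:perturb1}, Proposition \ref{degentonon}). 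None of this regularization/gluing layer appears in your outline, yet it is where most of the work -- and the dimension induction you invoke -- actually happens: the weak limit of the twisted solutions only gives a current, not a representative with pointwise positivity, and passing from the former to the latter is the heart of the proof.

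You also misplace the role of the PDE input. You plan to use Proposition \ref{prop:gctwisted} or Theorem \ref{mainpdethm} only at the very end with $f\equiv 0$, but (i) with $f\equiv 0$ the compatibility condition would force $\int_M G^n_\theta(\omega_0,\chi)=0$, whereas the hypothesis only gives $\ge 0$, and (ii) the conclusion of Theorem \ref{maindhymthm} is just the existence of $\omega\in\Gamma_{\chi,\theta}$, so no equation needs to be solved at that stage. Where the twisted equation is genuinely needed is \emph{inside} the concentration-of-mass step: there the right-hand side $f_t\chi^n=(\chi_t^n/\chi^n-1+A_t)\chi^n$ is non-constant and only bounded below by $-\varepsilon$, which is exactly why the paper proves Theorem \ref{mainpdethm} (non-constant, slightly negative $f$ in dimensions $\le 3$) before attacking this theorem. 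So, as written, the proposal has a genuine gap at its central step and a misapplication of the PDE results at its final step.
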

	
	From Theorem \ref{maindhymthm},  Theorem \ref{mainpdethm} (for $ n < 4 $) and \cite[Proposition~5.2]{gao} (for $ n \geq 4 $), we immediately see that the twisted dHYM equation on projective manifolds has a solution provided the numerical conditions in Theorem \ref{maindhymthm} are satisfied:
	
	\begin{coro}
		Let $ (M, \chi)$ be a compact projective manifold of dimension $ n $ and let $ 0 < \theta < \Theta < \pi $ be a constants. There exists a constant $ \epsilon > 0 $ depending only on $ n, \theta, \Theta $ such that if $ f $ is a smooth function on $ M $ with $ f > -\epsilon $, and $ \omega_0 $ is a smooth, real $ (1, 1) $-form on $ M $ satisfying
		\begin{equation*}
		\int_M \Re(\omega_0 +  \sqrt{-1} \chi )^n - \cot(\theta) \Im(\omega_0 +  \sqrt{-1} \chi)^{n} = \int_M f \chi^{n} \geq 0
		\end{equation*}
		\begin{equation*}
			\int_V \Re(\omega_0 +  \sqrt{-1} \chi )^k - \cot(\theta) \Im(\omega_0 +  \sqrt{-1} \chi)^{k} > 0
		\end{equation*}for every $ k $-dimensional subvariety $ V $ of $ M $ where $ k = 0, 1, \dots, n - 1 $ \\
		then there exists a smooth $ \omega \in [\omega_0] $ with $ \omega \in \Gamma_{\chi, \theta, \Theta} $ and 
		\begin{equation*}
		\Re(\omega +  \sqrt{-1} \chi )^n - \cot(\theta) \Im(\omega +  \sqrt{-1} \chi)^{n} = f \chi^{n}.
		\end{equation*}
	\end{coro}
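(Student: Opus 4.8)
The plan is to obtain the corollary by composing two solvability statements already at hand: Theorem~\ref{maindhymthm}, which converts the numerical positivity hypotheses into the existence of a genuine form in the cone $\Gamma_{\chi,\theta}$, and the twisted dHYM solvability \emph{in} that cone, namely Theorem~\ref{mainpdethm} when $n<4$ and \cite[Proposition~5.2]{gao} when $n\geq 4$. Thus the corollary is the projective analogue of the way one deduces a Positivstellensatz-type statement from a PDE existence theorem: first produce a background subsolution from the intersection-number conditions, then perturb it to an actual solution of the twisted equation.

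First I would apply Theorem~\ref{maindhymthm}. Its hypotheses are exactly the two displayed numerical conditions on $\omega_0$ — in fact it uses only the inequality $\int_M \Re(\omega_0+\sqrt{-1}\chi)^n-\cot(\theta)\Im(\omega_0+\sqrt{-1}\chi)^n\geq 0$, which follows from the assumed equality $\int_M \Re(\omega_0+\sqrt{-1}\chi)^n-\cot(\theta)\Im(\omega_0+\sqrt{-1}\chi)^n=\int_M f\chi^n\geq 0$ — so it produces a smooth $\omega_1\in[\omega_0]$ with $\omega_1\in\Gamma_{\chi,\theta}$. Next I would run the twisted PDE theorem in the relevant dimension with $\omega_1$ as the background form, after two routine checks. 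The integral normalization is cohomological, since $\int_M \Re(\eta+\sqrt{-1}\chi)^n-\cot(\theta)\Im(\eta+\sqrt{-1}\chi)^n$ depends only on $[\eta]$ and $[\chi]$; hence, as $\omega_1\in[\omega_0]$, this integral is the same for $\omega_1$ as for $\omega_0$, namely $\int_M f\chi^n\geq 0$, so the normalization required with background $\omega_1$ holds. And for the constant: take $\epsilon$ to be the one furnished by the ``further'' clause of Theorem~\ref{mainpdethm} when $n<4$ and by \cite[Proposition~5.2]{gao} when $n\geq 4$; both depend only on $n,\theta,\Theta$, so the stated uniformity is preserved. Applying the relevant theorem to the background $\omega_1\in\Gamma_{\chi,\theta}$, the function $f>-\epsilon$, and the target angle $\Theta$, yields a smooth $\omega\in[\omega_1]=[\omega_0]$ with $\omega\in\Gamma_{\chi,\theta,\Theta}$ and $\Re(\omega+\sqrt{-1}\chi)^n-\cot(\theta)\Im(\omega+\sqrt{-1}\chi)^n=f\chi^n$, which is the assertion.

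Since this is a composition of established results, I do not expect a substantive obstacle; the one point to watch is that Theorem~\ref{maindhymthm} delivers $\omega_1$ only in the one-sided cone $\Gamma_{\chi,\theta}$, whereas the conclusion asks for a solution in the two-sided cone $\Gamma_{\chi,\theta,\Theta}$. This is handled because both Theorem~\ref{mainpdethm} (through its ``further'' clause) and \cite[Proposition~5.2]{gao} accept one-sided input $\Gamma_{\chi,\theta}$ and return a solution in $\Gamma_{\chi,\theta,\Theta}$ once $\epsilon$ is shrunk in terms of $\Theta$; no intermediate upgrade of $\omega_1$ is needed, and, crucially, $\epsilon$ stays independent of the particular $\omega_0$ and $f$.
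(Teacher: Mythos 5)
Your proposal is correct and is essentially the paper's own argument: the paper treats the corollary as an immediate composition of Theorem~\ref{maindhymthm} (producing a background form in $\Gamma_{\chi,\theta}$ from the numerical hypotheses) with the twisted PDE solvability — Theorem~\ref{mainpdethm} for $n<4$ and the cited result of \cite{gao} for $n\geq 4$ — exactly as you describe, including the cohomological transfer of the integral normalization and the choice of $\epsilon$ depending only on $n,\theta,\Theta$.
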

	
	We now sketch an outline of the approaches to the above theorems. \\
	
	The $n = 1 $ case of Theorem \ref{mainpdethm} follows immediately from the fact that $ H^{(1,1)}(M, \mathbb{R})  = \mathbb{R}$. For $ n = 2 $, it again follows almost immediately from Yau's solution \cite{yaucalab} of the Calabi conjecture. The case of most interest to us here is hence $ n = 3 $. Here, we use a method of continuity as in \cite{pin3}. The new complications in the twisted dHYM equation as opposed to the non-twisted one will mainly show up while proving the Laplacian estimates. 
	\\
	
	To prove Theorem \ref{maindhymthm}, we proceed as in \cite{datpin20}. Using \cite[Proposition~5.2]{gao} and the PDE results for the twisted dHYM equation (Proposition \ref{prop:gctwisted} and Theorem \ref{mainpdethm}), we prove by a concentration of mass technique that there exists a positive $(1, 1)$-current $ \Theta \geq 2\beta [Y], \Theta \in [\Omega_0] $ satisfying the cone condition with respect to $ \chi $. Here, $ Y $ is the zero set of a section of a very ample line bundle on $ M $, so there exists a K\"ahler metric $ \chi_Y $ in the same cohomology class as $ [Y] $. Hence, we can add the exact current $ \displaystyle \beta \chi_Y - \beta [Y] $ to $ \Theta $ to obtain a K\"ahler current $ T \geq \beta[Y] $ satisfying the cone condition on $ M \cap Y^{c} $. We then produce a smooth metric in a neighborhood of $ Y $ using induction, a degenerate concentration of mass and a few successive regularization arguments. This smooth metric is then glued together with the regularizations of $ T $ to get a smooth metric $ \Omega \in [\Omega_0] $ satisfying the cone condition on $ M $.  \\
	
	\textbf{Acknowledgments}. The author would like to thank Dr. Vamsi Pingali for suggesting this problem, for various helpful discussions and also for several improvements to the first draft of this paper. This work was supported by a scholarship from the Indian Institute of Science.

\section{The cone conditions}

In this section we prove and quote some results regarding the dHYM equation which will be used throughout the paper. As before, let $ (M, \chi) $ be a K\"ahler manifold of dimension $ n $. A smooth, real $ (1, 1) $-form $ \omega $ is said to satisfy the deformed Hermitian Yang-Mills equation on $ (M, \chi) $ with phase angle $ \theta $ if 
\begin{equation}
	\label{eq:eigendhym}
	\sum_{i = 1}^n \text{arccot}(\lambda_i) = \theta
\end{equation}where $ (\lambda_i)_i $ are the eigenvalues of $ \omega $ with respect to the K\"ahler form $ \chi $. \\

Throughout the paper, we assume that the phase angle is supercritical i.e. $ \theta \in (0, \pi) $. In this case, equation $ \ref{eq:eigendhym} $ is equivalent to the following:
\begin{equation}
	\label{eq:formdhym}
	\Re(\omega +  \sqrt{-1} \chi)^n - \cot(\theta) \Im(\omega +  \sqrt{-1} \chi)^n = 0 
\end{equation}\begin{align*}
	\Re(\omega +  \sqrt{-1} \chi)^k - \cot(\theta) \Im(\omega +  \sqrt{-1} \chi)^k > 0 && k = 1, \dots, n - 1
\end{align*}
(the positivity of forms above is as defined in \cite{dembook}. The notions of strong positivity and positivity in \cite{dembook} coincide when we deal with polynomial expressions of two real $ (1, 1) $-forms, of which is K\"ahler, as in \ref{eq:formdhym}). \\

To see the equivalence of \ref{eq:eigendhym} and \ref{eq:formdhym} (when the phase is supercritical), choose a point $ x \in M $ and a basis $ (\xi_i)_i $ of $ \bigwedge^{(1, 0)} T^*_x $ such that $ \chi(x) =  \sqrt{-1} \sum_{ i } \xi_i \wedge \bar{\xi_i} $ and $ \omega(x) = \sqrt{-1} \sum_{ i } \lambda_i(x) \xi_i \wedge \bar{\xi_i} $. In this basis, 
\begin{align}
\label{eq:diagonalized}
 \Re((\omega +  \sqrt{-1} \chi)^k) - \cot(\theta) \Im((\omega +  \sqrt{-1} \chi)^k)(x) = && \text{}  \\  \sum_{K \subset \{ 1, \dots, n \}, |K|=k} k!\sqrt{-1}^{k} \prod_{i \in K}\sqrt{1 + \lambda_i(x)^2} \frac{\sin(\theta - \sum_{i \in K} \theta_i(x))}{\sin(\theta)} \bigwedge_{i \in K} \xi_i \wedge \bar{\xi_i} 
\end{align}
  where $ \theta_i(x) := \text{arccot}(\lambda_i(x)) $. So the required equivalence follows immediately from the following elementary lemma:
\begin{lem}
	\label{sinelem}
	Let $ \theta, \theta_1, \dots, \theta_k  \in (0, \pi)$, then $ \displaystyle \sum_{i = 1}^k \theta_i < \theta $ if and only if $ \displaystyle \sin(\theta - \sum_{ i \in I}\theta_i) > 0 $ for all subsets $ I $ of $ \{1, \dots, k\} $. 
\end{lem}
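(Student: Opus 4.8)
The plan is to prove both directions separately, with the forward direction being essentially immediate and the reverse direction requiring a small observation about the extremal subset. First, suppose $\sum_{i=1}^k \theta_i < \theta$. For any subset $I \subseteq \{1, \dots, k\}$, since each $\theta_i > 0$ we have $0 < \theta - \sum_{i \in I} \theta_i \leq \theta - \sum_{i \in \emptyset} \theta_i = \theta < \pi$ when $I = \emptyset$ gives $\theta$ itself, but more to the point $\theta - \sum_{i \in I}\theta_i \geq \theta - \sum_{i=1}^k \theta_i > 0$. Combining these, $\theta - \sum_{i \in I}\theta_i \in (0, \pi)$, and since $\sin$ is strictly positive on $(0,\pi)$, we get $\sin(\theta - \sum_{i \in I}\theta_i) > 0$. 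This handles the "only if" direction with no real work.

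For the converse, suppose that $\sin(\theta - \sum_{i \in I}\theta_i) > 0$ for every subset $I \subseteq \{1,\dots,k\}$. I would like to conclude $\sum_{i=1}^k \theta_i < \theta$ by taking $I = \{1, \dots, k\}$, but the positivity of $\sin$ at a single point only tells us $\theta - \sum_{i=1}^k \theta_i$ lies in some interval $(2m\pi, (2m+1)\pi)$, not necessarily $m = 0$. The key point is to use the hypothesis for \emph{all} subsets simultaneously, together with a connectedness/continuity argument: consider the function $g(t) = \theta - t\sum_{i=1}^k \theta_i$ for $t \in [0,1]$. At $t = 0$ we have $g(0) = \theta \in (0,\pi)$. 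If $\sum_{i=1}^k \theta_i \geq \theta$, then as $t$ increases from $0$ the value $g(t)$ decreases from $\theta$ and reaches $0$ at some $t_0 = \theta/\sum \theta_i \in (0,1]$. However, this continuous-parameter argument does not directly interface with the finite collection of subsets, so instead I would argue discretely: order matters here. Actually the cleanest route is to sort the $\theta_i$ and build up the sum one index at a time, using at each stage that $\theta - (\theta_{i_1} + \dots + \theta_{i_j})$ is a value of $\sin$-positivity, hence lies in $(0,\pi)$ once we know it started in $(0,\pi)$ and each increment $\theta_{i_{j+1}} < \pi$; the point is that a number in $(0,\pi)$ decreased by something in $(0,\pi)$ lands in $(-\pi, \pi)$, and $\sin$-positivity then forces it into $(0,\pi)$. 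Formally: set $S_0 = \theta \in (0,\pi)$ and $S_{j+1} = S_j - \theta_{i_{j+1}}$; inductively $S_j \in (0,\pi)$, so $S_{j+1} \in (S_j - \pi, S_j) \subseteq (-\pi, \pi)$, and by hypothesis $\sin(S_{j+1}) = \sin(\theta - \sum_{l \leq j+1}\theta_{i_l}) > 0$, which on $(-\pi,\pi)$ forces $S_{j+1} \in (0,\pi)$. After $k$ steps, $S_k = \theta - \sum_{i=1}^k \theta_i \in (0,\pi)$, in particular $S_k > 0$, i.e. $\sum_{i=1}^k \theta_i < \theta$.

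The main (minor) obstacle is exactly the wraparound issue in the converse: positivity of $\sin$ at one point is not enough, and one must exploit that the intermediate partial sums are \emph{also} covered by the hypothesis to inductively trap every partial sum inside $(0,\pi)$. Once this is recognized, the proof is a short induction on $k$; no estimates or geometry are involved, consistent with the lemma being called "elementary."
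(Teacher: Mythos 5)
Your proof is correct and follows essentially the same route as the paper: the forward direction is the same immediate observation, and your step-by-step trapping of the partial sums $S_j$ in $(0,\pi)$ is just the unrolled form of the paper's induction on $k$, which likewise uses the induction hypothesis to place $\theta-\sum_{i=1}^{k-1}\theta_i$ in $(0,\theta)$ and then combines the range bound $(-\pi,\pi)$ with sine positivity to conclude.
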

\begin{proof}
	If $ \displaystyle \sum_{i = 1}^k \theta_i < \theta $, then certainly we must have $ 0 < \displaystyle \sum_{i \in I} \theta_i < \theta $ for all non-empty subsets $ I $ of $\{1, \dots, k\}$ and consequently $ \displaystyle \sin(\theta - \sum_{ i \in I}\theta_i) > 0 $. If $ I $ is empty, then $ \sin(\theta) > 0 $ as $ \theta \in (0, \pi) $ \\
	
	For the converse, we use induction on $ k $. If $ k = 0 $, there is nothing to prove. Assume the lemma has been proved for $ k - 1  $. By the induction hypothesis, we must have $ \displaystyle 0 < \sum_{i = 1}^{k-1} \theta_i < \theta $. Hence, $ \displaystyle -\pi < \theta - \sum_{i = 1}^{k} \theta_i < \theta $. As we have $ \displaystyle \sin(\theta - \sum_{i = 1}^{k} \theta_i) > 0 $, it follows that $ \displaystyle \theta > \sum_{i = 1}^{k} \theta_i $.
\end{proof}

For any two real $ (1, 1) $-forms $ \alpha $, $ \beta $, we define the polynomial $ G_\theta^k(\alpha, \beta) $ as $$ G_\theta^k(\alpha, \beta) = \Re(\alpha +  \sqrt{-1} \beta)^k - \cot(\theta) \Im(\alpha +  \sqrt{-1} \beta)^k  $$From the discussion above, we see that for a real $ (1, 1) $-form $ \omega $ on $ M $ and $ \theta \in (0, \pi) $, the following conditions are equivalent
\begin{align}
\label{coneeq}
	\sup_{I \subset \{1, \dots, n\}, |I| = m} \sum_{i \in I} \text{arccot}(\lambda_i) < \theta & \iff \\
	G^k_\theta(\omega, \chi) > 0 \text{ for  } k = 1, \dots, m
\end{align}
For some small values of $ k $, the polynomials $ G^k_\theta $ are as below:
\begin{align*}
	G_\theta^1(\alpha, \beta) &= \alpha - \cot(\theta) \beta \\
 G_\theta^2(\alpha, \beta) &= \alpha^2 - 2\cot(\theta)\alpha \beta  - \beta^2 \\
 G_\theta^3(\alpha, \beta) &= \alpha^3 - 3\cot(\theta) \alpha^2 \chi -  3 \alpha \chi^2 + \cot(\theta) \chi^3
\end{align*}

\begin{defn}[$ \Gamma^m_{\chi, \theta}, \Gamma_{\chi, \theta}, \Gamma_{\chi, \theta, \Theta} $]
	\label{def:gamcon}
	Let $ \chi $ be a strictly positive $ (1, 1) $-form and $ \omega $ a real $ (1, 1) $-form. When any of the equivalent conditions \ref{coneeq} or  hold, we say that $ \omega \in \Gamma^m_{\chi, \theta} $. If $ m = n - 1 $, we omit the superscript in $ \Gamma_{\chi, \theta}^{n-1} $ and write $ \omega \in \Gamma_{\chi, \theta} $. If $ \Theta \in (0, \pi) $ and $ \omega \in \Gamma_{\chi, \Theta}^n \cap \Gamma_{\chi, \theta}^{n-1} $, we write $ \omega \in \Gamma_{\chi, \theta, \Theta} $
\end{defn}
Of course, these definitions also make sense if $ \omega $ is replaced by a Hermitian matrix and $ \chi $ by a positive definite matrix, so they can also be considered point-wise. \\

We see that if $ n > 1 $ and $ \omega  \in \Gamma_{\chi, \theta} $, then $ \Omega_{\theta} := G^{1}_{\theta}(\omega, \chi) = \omega - \cot(\theta) \chi > 0 $. In this paper, it will be convenient to work with the pair $ (\Omega_{\theta}, \chi) $ instead of $ (\omega, \chi) $, so we also define the polynomials $ P^k_{\theta}(\alpha, \beta) $ for real $ (1, 1) $-forms $ \alpha, \beta $ as follows:

\begin{equation*}
	P_\theta^k(\alpha, \beta) := G_\theta^k(\alpha + \cot(\theta)\beta, \beta)
\end{equation*}
In particular, $ \displaystyle P_\theta^k(\Omega_\theta, \chi) = G_\theta^k(\omega, \chi) $. For some small values of $ k $, the polynomials $ P_\theta^k $ are as below
\begin{align*}
	P_\theta^1(\alpha, \beta) &= \alpha \\
	P_\theta^2(\alpha, \beta) &= \alpha^2 - \csc^{2}(\theta) \beta^2 \\
	P_\theta^3(\alpha, \beta) &= \alpha^3 - 3 \csc^2(\theta) \alpha \beta^2 - 2 \csc^2(\theta) \cot(\theta) \beta^3  
\end{align*}
We also note the following useful equations for any real $ (1,1) $-forms $ \alpha, \beta, \delta $
\begin{align}
\label{binomsum}
	G^k_\theta(\alpha + \delta, \beta) &= \sum_{r } \binom{k}{r} G_\theta^r(\alpha, \beta) \delta^{k - r} \\
	P^k_\theta(\alpha + \delta, \beta) &= \sum_{r } \binom{k}{r} P_\theta^r(\alpha, \beta) \delta^{k - r}
\end{align}
Here, $ G^0_{\theta}, P^0_{\theta} $ denote the constant polynomial $ 1 $.
Analogous to definition $ \ref{def:gamcon} $, we make the following definition
\begin{defn}[Cone condition for smooth forms]
	\label{def:coneOmega}
	Let $ \Omega $ be a real $ (1, 1) $-form and $ \chi $ a strictly positive $ (1, 1) $-form. For $ 1 \leq m \leq n $, we say $ \Omega \in C^m_{\chi, \theta} $ if $ \displaystyle P^k_\theta(\Omega, \chi) > 0 $ for $ k = 1, \dots, m $. We also denote this as $ (\Omega, \chi) \in C^m_{\theta}$ and say that the pair $ (\Omega, \chi) $ satisfies the cone condition $  C^m_\theta $. If $ m = n - 1 $, we write $ C_{\chi, \theta} $ in place of $ C_{\chi, \theta}^{n-1} $.
\end{defn}
It is known that the sets $ \Gamma^m_{\chi, \theta} $ are convex (\cite{yuyuan}, \cite{gao}). From this it immediately follows that the sets $ C^m_{\chi, \theta}  $ are also convex. If $ (\Omega, \chi) \in C^m_\theta  $, then in particular, $ \Omega > 0 $, so if $ \chi_0 $ is another strictly positive form such that $ \chi_0 \leq \chi $, then by the min-max principle, the eigenvalues of $ \Omega $ with respect to $ \chi_0 $ are greater (not necessarily strictly) than the eigenvalues of $ \Omega $ with respect to $ \chi $ (precisely, the i-th largest eigenvalue with respect to $ \chi_0 $ is larger than the i-th largest eigenvalue with respect to $ \chi $). It is then not hard to see using the relations \ref{binomsum} that $ (\Omega, \chi_0) \in C^m_\theta $. \\

As in \cite{gao} we also define the cone condition for a current. Fix a non-negative smooth function $ \displaystyle \rho $ on $ \mathbb{R}_{\geq 0} $ supported in $ [0, 1] $ such that $ \displaystyle \int_{\mathbb{R}_{\geq 0}} \rho(t) t^{2n-1} |\mathbb{S}^{2n-1}| dt = 1 $. For any $ \delta > 0 $ and $ L^1_{loc} $ function $ \phi $ on $ \mathbb{C}^n $, we define the $ \delta $-mollification of $ \phi $ as $ \phi_{,\delta}(x) = \int_{\mathbb{C}^{n}} \delta^{-2n} \phi(x - y) \rho(\frac{|y|}{\delta}) dy $. 
\begin{defn}[Cone condition for positive currents]
	\label{def:conecurrent}
	Let $ (M, \chi) $ be a K\"ahler manifold. If $ T $ is a positive $ (1, 1) $-current, we say that the pair $ (T, \chi) $ satisfies the cone condition $ \displaystyle \overline{C_{\chi, \theta}} $ on an open set $ O $ if for every coordinate chart $ U \subset O $ where $ T|_U =  \sqrt{-1} \partial \bar{\partial} \phi_U $ and every $ \delta > 0 $, on $ \displaystyle U_\delta := {x \in U: \overline{B_\delta(x)} \subset U } $, we have $ P_\theta^k( \sqrt{-1} \partial \bar{\partial} \phi_{U, \delta}, \chi_0) \geq 0 $ for $ k = 1, \dots, m $ and $ \chi_0 $ any K\"ahler form on $ U $ with constant coefficients satisfying $ \chi_0 \leq \chi $ on $ B_\delta(x) $ (the coordinate ball of radius $ \delta $ around $ x $).
\end{defn}
This cone condition has the important property of being closed under weak limits and it can be seen that for a smooth form $ \Omega $, $ \Omega \in \overline{C^{m}_{\chi, \theta}} $ iff $ \Omega(x) $ is in the closure of $ {C^{m}_{\chi(x), \theta}} $ (hence the notation). \\

We now prove some useful results to be used later in the paper. The following lemma shows that one can ensure that one can perturb the background positive-definite matrix and still maintain the cone condition by assuming a little more positivity.
\begin{lem}
	\label{lem:perturb1}
	If $ \chi_0 $ is a positive-definite matrix, $ \omega $ is Hermitian with respect to $ \chi_0 $, $ \omega \in \Gamma_{\chi_0, \theta} $ and $ \epsilon_1 > 0 $,  then there exists an $ \epsilon_2 > 0$ depending only on $ \epsilon_1, \theta, n $ such that for all $ \chi_0 \leq \chi \leq \chi_0(1 + \epsilon_2) $, we have $ \omega + 2\epsilon_1 \chi \in \Gamma_{\chi, \theta} $
\end{lem}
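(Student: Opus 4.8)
I would work directly with the pointwise statement in which the lemma is phrased. The case $n=1$ is trivial because $\Gamma_{\chi,\theta}=\Gamma^0_{\chi,\theta}$ imposes no condition, so assume $n\ge 2$. First I would diagonalize $\omega$ and $\chi_0$ simultaneously: choose coordinates with $\chi_0=I$ and $\omega=\mathrm{diag}(\lambda_1,\dots,\lambda_n)$, $\lambda_1\ge\cdots\ge\lambda_n$, and set $\phi_i=\mathrm{arccot}(\lambda_i)$, so $\phi_1\le\cdots\le\phi_n$. By the equivalence \ref{coneeq}, $\omega\in\Gamma_{\chi_0,\theta}$ says exactly that the largest sum of $n-1$ of the $\phi_i$, namely $\sum_{i=2}^n\phi_i$, is $<\theta$; since each $\phi_i>0$ this forces $\phi_i\in(0,\theta)$ and $\lambda_i>\cot\theta$ for every $i$. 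Fix any $\chi$ with $\chi_0\le\chi\le(1+\epsilon_2)\chi_0$; let $\nu_1\ge\cdots\ge\nu_n$ be the eigenvalues of $\omega$ with respect to $\chi$, put $\mu_i=\nu_i+2\epsilon_1$ (these are the eigenvalues of $\omega+2\epsilon_1\chi$ with respect to $\chi$, since $(\omega+2\epsilon_1\chi)v=\mu\chi v\iff\omega v=(\mu-2\epsilon_1)\chi v$), and $\psi_i=\mathrm{arccot}(\mu_i)$. By \ref{coneeq} again, what has to be shown is $\sum_{i=2}^n\psi_i<\theta$.

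The one computational input I would isolate is a one-sided comparison between $\nu_i$ and $\lambda_i$. Using the Courant--Fischer formula $\nu_i=\max_{\dim V=i}\min_{0\ne v\in V}\langle\omega v,v\rangle/\langle\chi v,v\rangle$ with the test space $V=\mathrm{span}(e_1,\dots,e_i)$, on which $\langle\omega v,v\rangle\ge\lambda_i|v|^2$, together with $|v|^2\le\langle\chi v,v\rangle\le(1+\epsilon_2)|v|^2$, one obtains $\nu_i\ge\lambda_i/(1+\epsilon_2)$ when $\lambda_i\ge 0$ and $\nu_i\ge\lambda_i$ when $\lambda_i<0$. Combining this with the elementary inequality $\mathrm{arccot}(y/(1+\epsilon_2))-\mathrm{arccot}(y)\le\epsilon_2/2$ for $y\ge 0$ (immediate from $\mathrm{arccot}'(x)=-1/(1+x^2)$ and $x/(1+x^2)\le\tfrac12$) and the trivial bound $\mu_i\ge\lambda_i+2\epsilon_1>\lambda_i$ when $\lambda_i<0$, one gets the universal estimate $\psi_i\le\phi_i+\epsilon_2/2$ for every $i$.

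The universal estimate alone is not enough, and this is the main obstacle: $\sum_{i=2}^n\phi_i$ may be arbitrarily close to $\theta$, and then the accumulated loss $(n-1)\epsilon_2/2$ overshoots $\theta$ no matter how small $\epsilon_2$ is. The fix is a dichotomy that extracts a uniform (i.e. $\omega$-independent) saving from the additive term $2\epsilon_1$ precisely when the arccot-sum is near $\theta$. If $\sum_{i=2}^n\phi_i<\theta/2$, then $\sum_{i=2}^n\psi_i<\theta/2+(n-1)\epsilon_2/2<\theta$ as soon as $\epsilon_2<\theta/(n-1)$.

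If instead $\sum_{i=2}^n\phi_i\ge\theta/2$, then $\phi_n\ge\theta/(2(n-1))$, so $\lambda_n$ lies in the fixed compact interval $(\cot\theta,\Lambda_0]$ with $\Lambda_0:=\cot(\theta/(2(n-1)))>0$; choosing $\epsilon_2$ small enough that $\epsilon_2\Lambda_0\le\epsilon_1$ gives $\mu_n=\nu_n+2\epsilon_1\ge\lambda_n+\epsilon_1$, hence $\psi_n\le\mathrm{arccot}(\lambda_n+\epsilon_1)=\phi_n-\int_{\lambda_n}^{\lambda_n+\epsilon_1}\tfrac{dx}{1+x^2}\le\phi_n-\delta_0$ with $\delta_0:=\epsilon_1/(1+M^2)>0$ and $M:=\max(|\cot\theta|,\Lambda_0+\epsilon_1)$. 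Using $\psi_i\le\phi_i+\epsilon_2/2$ for $i=2,\dots,n-1$ then gives $\sum_{i=2}^n\psi_i\le\sum_{i=2}^n\phi_i-\delta_0+(n-2)\epsilon_2/2<\theta-\delta_0+(n-2)\epsilon_2/2<\theta$ provided $\epsilon_2<2\delta_0/(n-2)$ (vacuous for $n=2$). Taking $\epsilon_2$ to be the smallest of the finitely many thresholds encountered — all functions of $\epsilon_1,\theta,n$ only, the crucial point being that $\Lambda_0$ and $\delta_0$ do not depend on $\omega$ or $\chi_0$, which is exactly what the bounds $\cot\theta<\lambda_n\le\Lambda_0$ guarantee — finishes the proof.
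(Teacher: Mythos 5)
Your argument is correct; I checked the individual ingredients and they all hold: the Courant--Fischer comparison $\nu_i\ge\lambda_i/(1+\epsilon_2)$ when $\lambda_i\ge 0$ and $\nu_i\ge\lambda_i$ when $\lambda_i<0$, the elementary bound $\mathrm{arccot}(y/(1+\epsilon_2))-\mathrm{arccot}(y)\le\epsilon_2/2$ for $y\ge 0$, and the dichotomy on $\sum_{i\ge2}\phi_i$ do combine to give thresholds for $\epsilon_2$ depending only on $\epsilon_1,\theta,n$. The underlying mechanism is the same as the paper's --- eigenvalue comparison under the change of background metric, monotonicity of $\mathrm{arccot}$, and a uniform angular gain from the additive $\epsilon_1$ term obtained by trapping the smallest eigenvalue in a compact interval of the form $(\cot\theta,\cot(c(\theta,n))]$ --- but your decomposition is genuinely different. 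The paper writes $2\epsilon_1\chi=\epsilon_1\chi+\epsilon_1\chi$ and factors the proof through the intermediate cone $\Gamma_{\chi_0,\theta-\epsilon_3}$: first the fixed-background statement that $\omega+\epsilon_1\chi\in\Gamma_{\chi_0,\theta-\epsilon_3}$ for every $\chi\ge\chi_0$, then a transfer step showing that passing from $\chi_0$ to $\chi\le(1+\epsilon_2)\chi_0$ while adding the second copy of $\epsilon_1\chi$ costs less than the margin $\epsilon_3$, which requires a three-way split on the size of each $\lambda_i$ relative to $\cot(\theta/M)$ and a separate discussion of the $n=2$ case with both eigenvalues non-positive. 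You instead argue in a single pass directly with respect to $\chi$, charging a uniform loss of $\epsilon_2/2$ to each index and extracting a definite gain $\delta_0$ only at the index carrying the largest angle; your per-index sign dichotomy inside the min-max step absorbs the paper's special $n=2$ case automatically, and your constants are explicit. What the paper's route buys is a reusable intermediate statement about the smaller cone $\Gamma_{\chi_0,\theta-\epsilon_3}$; what yours buys is a shorter, more self-contained proof of exactly the stated lemma.
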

\begin{proof}
	We first show that for all $ \chi \geq \chi_0 $, $ \omega + \epsilon_1 \chi \in \Gamma_{\chi_0, \theta - \epsilon_3} $  some $ \epsilon_3 $ depending only on $ \epsilon_1, \theta, n $. Let $ (\lambda_i)_{i = 1, \dots, n} $ denote the eigenvalues of $ \omega $ with respect to $ \chi_0 $ ordered so that $ \lambda_1 \leq \dots \leq \lambda_n $ and $ (\gamma_i)_{i = 1, \dots, n} $ denote the eigenvalues of $ \omega + \epsilon_1 \chi $ with respect to $ \chi_0 $ ordered similarly. It is not hard to see that we must have $ \gamma_i \geq \lambda_i + \epsilon_1 $. 
	\\
	
	Let $ I $ be any subset of $ \{1, \dots, n\} $ of $ n - 1 $ elements. If $ \displaystyle \lambda_i \geq \cot \bigg(\frac{\theta}{2(1 + n^2)} \bigg) $ $ \forall i $, then $ \displaystyle \sum_{ i \in I} \text{arccot}(\gamma_i) < \sum_{i \in I} \text{arccot}(\lambda_i) < \frac{n\theta}{2(1 + n^2)} < \frac{\theta}{2}  $. If not, then $ \displaystyle \lambda_1 \in \bigg[\cot(\theta), \cot\bigg(\frac{\theta}{2(1 + n^2)}\bigg) \bigg] $ and by the continuity and monotonicity of $ \text{arccot} $ on this compact interval, there exists an $ \epsilon_4 > 0 $ such that $ \text{arccot}(\lambda + \epsilon_1) < \text{arccot}(\lambda) - \epsilon_4 $ for all $ \lambda \in \bigg[\cot(\theta), \cot\bigg(\frac{\theta}{2(1 + n^2)}\bigg) \bigg]  $, so we must have $ \displaystyle \sum_{i \in I} \text{arccot}(\gamma_i) \leq \sum_{i \in I} \text{arccot}(\lambda_i + \epsilon) \leq \sum_{i \in I} \text{arccot}(\lambda_i) - \epsilon_4  $. Hence, we can take $ \displaystyle \epsilon_3 = \min\bigg(\epsilon_4, \frac{\theta}{2} \bigg) $. 
	\\
	
	To prove the lemma, it is now sufficient to show that $ \exists \epsilon_2 > 0 $ such that if $ \chi_0 \leq \chi \leq \chi_0(1 + \epsilon_2) $ and $ \omega \in \Gamma_{\chi_0, \theta - \epsilon_3} $, then $ \omega + \epsilon_1 \chi \in  \Gamma_{\chi, \theta} $. Let $ M $ be so large that $ \displaystyle \cot\bigg(\frac{\theta}{M} \bigg) > \epsilon_1 $ and also $ \displaystyle M > \frac{2 n \theta}{\epsilon_3} $. We define $$ \displaystyle \epsilon_2 = \min \bigg( \frac{\epsilon_1}{\cot(\frac{\theta}{M} )}, \frac{\cot(\frac{\theta}{M} )}{\cot(\frac{\theta}{M} ) - \epsilon_1} - 1 \bigg) $$ As before, let  $ (\lambda_i)_{i = 1, \dots, n} $ denote the eigenvalues of $ \omega $ with respect to $ \chi_0 $ ordered so that $ \lambda_1 \leq \dots \leq \lambda_n $ but now let $ (\gamma_i)_{i = 1, \dots, n} $ denote the eigenvalues of $ \omega $ with respect to $ \chi $ ordered similarly. The eigenvalues of $ \omega + \epsilon_1 \chi $ with respect to $ \chi $ are then $ (\gamma_i + \epsilon_1)_{i = 1, \dots, n}  $. \\
	
	If $ n = 2 $ and both $ \lambda_1 $ and $ \lambda_2 $ are non-positive, then $ \omega $ is negative semi-definite, so $ \gamma_i \geq \lambda_i $ as $ \chi \geq \chi_0 $ and the lemma follows immediately. If $ n = 2 $ and at most $ \lambda_1 \leq 0 $, then the following argument for $ n > 2 $ applies without change. If $ n > 2 $, at most $ \lambda_1 $ can be non-positive (because if not, then $ \text{arccot}(\lambda_{1}) + \text{arccot}(\lambda_2) \geq \pi > \theta $), so $ \lambda_i > 0  $ for $ i = 2, \dots, n $. By the min-max principle, it can be seen that $$ \gamma_i \geq \frac{\lambda_i}{1 + \epsilon_3} $$ for $ i = 2, \dots, n $. Also by the min-max principle, if $ \lambda_1 < 0 $, then $ \gamma_1 \geq \lambda_1 $ and if $ \lambda_1 \geq 0 $, then $  \gamma_1 \geq \frac{\lambda_1}{1 + \epsilon_3} $. 
	\\
	
	As before, let $ I $ be any subset of $ \{1, \dots, n\} $ of $ n - 1 $ elements. Now $$ \sum_{i \in I} \text{arccot}(\gamma_i + \epsilon_1) = \sum_{i \in I: \lambda_i < 0} \text{arccot}(\gamma_i + \epsilon_1) + \sum_{i \in I: 0 \leq \lambda_i \leq \cot(\frac{\theta}{M})} \text{arccot}(\gamma_i + \epsilon_1) + \sum_{i \in I: \lambda_i > \cot(\frac{\theta}{M})} \text{arccot}(\gamma_i + \epsilon_1) $$The first sum on the right can have at most one summand and as seen above, in that case we would have $ \text{arccot}(\gamma_1 + \epsilon_1) < \text{arccot}(\gamma_1) \leq \text{arccot}(\lambda_1) $ so that $$ \sum_{i \in I: \lambda_i < 0} \text{arccot}(\gamma_i + \epsilon_1) < \sum_{i \in I: \lambda_i < 0} \text{arccot}(\lambda_i) .$$
	If $ 0 \leq \lambda_i \leq \cot(\frac{\theta}{M}) $ for some $ i $, then $$ \lambda_i = \frac{\epsilon_2 \lambda_i}{1 + \epsilon_2} + \frac{\lambda_i}{1 + \epsilon_2} \leq \lambda_i \epsilon_2 + \frac{\lambda_i}{1 + \epsilon_2} \leq \epsilon_1 + \frac{\lambda_i}{1 + \epsilon_2} \leq \epsilon_1 + \gamma_i $$by the choice of $ M, \epsilon_2$. Hence, $$ \sum_{i \in I: 0 \leq \lambda_i \leq \cot(\frac{\theta}{M})} \text{arccot}(\gamma_i + \epsilon_1) \leq \sum_{i \in I: 0 \leq \lambda_i \leq \cot(\frac{\theta}{M})} \text{arccot}(\lambda_i) .$$Finally, if $ \lambda_i > \cot(\frac{\theta}{M}) $ for some $ i $, then $$ \epsilon_1 + \gamma_i \geq \epsilon_1 + \frac{\lambda_i}{1 + \epsilon_2} > \cot(\frac{\theta}{M}) $$again by the choice of $ M  $ and $ \epsilon_2 $ so that $$ \sum_{i \in I: \lambda_i > \cot(\frac{\theta}{M})} \text{arccot}(\gamma_i + \epsilon_1) \leq \frac{n \theta}{M} < \frac{\epsilon_3}{2} $$by the choice of $ M $. Hence, we get $$ \sum_{i \in I} \text{arccot}(\gamma_i + \epsilon_1) < \sum_{i \in I: \lambda_i \leq \cot(\frac{\theta}{M})} \text{arccot}(\lambda_i) + \frac{\epsilon_3}{2} < \theta - \frac{\epsilon_3}{2} $$and $ \omega + \epsilon_1 \chi \in \Gamma_{\chi, \theta} $ as required.
\end{proof}	
	
	The next lemma will be used mainly while proving the concentration of mass result.
\begin{lem}
	\label{lem:bound}
		\textbf{a}. If $ \omega \in \Gamma_{\chi, \theta} $, then there exists a $ C $ depending only on $ \theta, n $ such 
		that $ C( \Omega_{\theta} + \chi )^{n} = C(\omega - \cot(\theta) \chi + \chi)^{n} \geq G_\theta^{n}(\omega, \chi) $. \\
		\textbf{b}. If $ \omega \in \Gamma_{\chi, \theta - \delta} $ for some $ \delta \in (0, \theta) $, then letting $ \Omega = \omega - \chi \cot(\theta) $, we have $ P_\theta^{k}(\Omega, \chi) \geq \epsilon \cdot \Omega^{k} $ for some $ \epsilon > 0 $ depending only on $ \theta, n, \delta $ and $ k = 1, \dots, n - 1 $.	
	
\end{lem}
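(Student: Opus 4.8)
\section*{Proof proposal}

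The plan is to reduce everything to scalar inequalities by working pointwise. Fix $x\in M$ and pick a basis $(\xi_i)$ of $\bigwedge^{(1,0)}T^*_xM$ simultaneously diagonalizing $\chi$ and $\omega$, as in the derivation of \eqref{eq:diagonalized}, and write $\theta_i=\text{arccot}(\lambda_i)\in(0,\pi)$. In this basis $G^k_\theta(\omega,\chi)$, $(\Omega_\theta+\chi)^k$ and $\Omega^k$ are all diagonal, and the coefficient of $\bigwedge_{i\in K}\xi_i\wedge\bar{\xi_i}$ (with $|K|=k$, after dividing by $k!\sqrt{-1}^{k}$) equals, respectively, $\prod_{i\in K}\sqrt{1+\lambda_i^2}\,\frac{\sin(\theta-\sum_{i\in K}\theta_i)}{\sin\theta}$, $\prod_{i\in K}(\lambda_i-\cot\theta+1)$ and $\prod_{i\in K}(\lambda_i-\cot\theta)$. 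The elementary identities $\sqrt{1+\lambda_i^2}=\csc\theta_i$, $\ \lambda_i-\cot\theta=\frac{\sin(\theta-\theta_i)}{\sin\theta_i\sin\theta}$ and $\sin\theta_i\,(\lambda_i-\cot\theta+1)=\frac{\sin(\theta-\theta_i)}{\sin\theta}+\sin\theta_i$ turn the two claimed comparisons into inequalities purely among sines; the actual content is that the cone hypotheses pin the relevant angles into a compact subinterval of $(0,\pi)$ away from the zeros of these sines, which produces the uniform constants.

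For part \textbf{a}, the case $n=1$ holds with $C=1$, so assume $n\ge 2$; then $\Omega_\theta=\omega-\cot(\theta)\chi>0$, hence $(\Omega_\theta+\chi)^n>0$ and there is nothing to prove when $G^n_\theta(\omega,\chi)\le 0$. If $G^n_\theta(\omega,\chi)>0$ then, since in its diagonalized coefficient every factor other than $\sin(\theta-\sum_i\theta_i)$ is positive, we get $\sin(\theta-\sum_i\theta_i)>0$ and so the coefficient is at most $\frac{1}{\sin\theta}\prod_i\csc\theta_i$. Comparing with the coefficient $\prod_i(\lambda_i-\cot\theta+1)$ of $(\Omega_\theta+\chi)^n$, the claim reduces to bounding $\sin\theta_i(\lambda_i-\cot\theta+1)=\frac{\sin(\theta-\theta_i)}{\sin\theta}+\sin\theta_i$ below by a positive constant depending only on $\theta$. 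On $[0,\theta]\subset[0,\pi)$ the two summands are nonnegative and cannot both vanish (if $\sin\theta_i=0$ then $\theta_i=0$ and the first summand equals $1$), so this continuous function has a positive minimum $c(\theta)$; moreover $\theta_i\le\theta$ because $\omega\in\Gamma_{\chi,\theta}$ makes each $\theta_i$ a summand of an $(n-1)$-fold sum that is $<\theta$. Then $C=\bigl(\sin\theta\,c(\theta)^n\bigr)^{-1}$ works.

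For part \textbf{b} (vacuous when $n=1$), use $P^k_\theta(\Omega,\chi)=G^k_\theta(\omega,\chi)$. Since $\omega\in\Gamma_{\chi,\theta-\delta}$ forces $\theta_i<\theta-\delta<\theta$ for every $i$, each $\lambda_i-\cot\theta=\frac{\sin(\theta-\theta_i)}{\sin\theta_i\sin\theta}>0$, so the coefficients of $\Omega^k$ are positive, and for $|K|=k$ the inequality ``coefficient of $G^k_\theta$ $\ \ge\ \epsilon\cdot$ coefficient of $\Omega^k$'' becomes, after cancelling the common factor $\prod_{i\in K}\csc\theta_i$,
\begin{equation*}
\sin\Bigl(\theta-\sum_{i\in K}\theta_i\Bigr)\ \ge\ \epsilon\,\frac{\prod_{i\in K}\sin(\theta-\theta_i)}{\sin^{k-1}\theta}.
\end{equation*}
For $k\le n-1$ one has $0<\sum_{i\in K}\theta_i<\theta-\delta$, so $\theta-\sum_{i\in K}\theta_i\in[\delta,\theta]\subset(0,\pi)$ and, by concavity of $\sin$ on $[0,\pi]$, the left side is at least $\min(\sin\delta,\sin\theta)$, while the numerator on the right is at most $1$. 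Hence any $\epsilon\le\min(\sin\delta,\sin\theta)\cdot\sin^{n-2}\theta$ works simultaneously for all $k=1,\dots,n-1$ (and such $\epsilon\le 1$, which is exactly what the $k=1$ case needs). Given all these coefficientwise inequalities, $G^k_\theta(\omega,\chi)-\epsilon\,\Omega^k$ is a nonnegative combination of the forms $\sqrt{-1}^{k}\bigwedge_{i\in K}\xi_i\wedge\bar{\xi_i}$, each a wedge of positive $(1,1)$-forms and hence strongly positive, so $G^k_\theta(\omega,\chi)-\epsilon\,\Omega^k$ is positive, i.e. $P^k_\theta(\Omega,\chi)\ge\epsilon\,\Omega^k$.

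The only delicate point — used repeatedly above — is that the cone hypotheses localize the angles $\theta_i$ (in \textbf{a}) and the partial sums $\sum_{i\in K}\theta_i$ (in \textbf{b}) into a compact subinterval of $(0,\pi)$ bounded away from the zeros of the sines appearing in \eqref{eq:diagonalized}; once this is arranged, the uniform $C$ and $\epsilon$ drop out of elementary compactness and concavity. The passage from coefficientwise inequalities to inequalities of forms is legitimate because a diagonal $(k,k)$-form with nonnegative coefficients in such a basis is a nonnegative combination of products of positive $(1,1)$-forms, for which positivity and strong positivity coincide, as noted after \eqref{eq:formdhym}.
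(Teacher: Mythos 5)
Your proof is correct and follows essentially the same route as the paper: reduce to pointwise scalar inequalities via the diagonalized expression \eqref{eq:diagonalized} and bound the resulting factors uniformly using the cone hypotheses. The only cosmetic difference is that you prove the scalar bounds through the angle substitution $\lambda_i=\cot\theta_i$ and compactness in $[0,\theta]$, whereas the paper states the equivalent inequalities $C(\lambda-\cot\theta+1)\ge\sqrt{1+\lambda^2}$ and $\sqrt{1+\lambda^2}\ge C(\lambda-\cot\theta)$ directly in $\lambda$.
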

\begin{proof}
	a. If $ \lambda - \cot(\theta) > 0 $, then there exists a $ C > 0 $ independent of $ \lambda $ such that $ C(\lambda - \cot(\theta) + 1) \geq \sqrt{1 + \lambda^{2}} $ from this and the expression for $ G^n_{\theta} $, part $ a $ follows. \\
	b. There exists a $ C > 0 $ such that $ \sqrt{1 + \lambda^2} \geq C(\lambda - \cot(\theta)) $ for any $ \lambda $ and by the expression for $ G^k_{\theta} $ for $ k < n $, part $ b $ follows.
\end{proof}

	\section{The twisted dHYM equation and concentration of mass} 
	
	In this section, we will prove Theorem \ref{mainpdethm}. As before, let $ (M, \chi) $ be a K\"ahler manifold of dimension $ n \leq 3 $ and let $ \omega $ be a smooth, closed, real $ (1, 1) $-form on $ M $. Let $ f $ be a smooth function on $ M $ which satisfies $ \displaystyle \int_M \Re(\omega +  \sqrt{-1} \chi )^n - \cot(\theta) \Im(\omega +  \sqrt{-1} \chi)^{n} = \int_M f \chi^{n} \geq 0 $ for some $ \theta \in (0, \pi) $. Suppose also that $ \omega \in \Gamma_{\chi, \theta} $ with $ 0 < \theta  < \pi $. We will now solve the differential equation 
	$ \displaystyle \Re(\omega_\phi +  \sqrt{-1} \chi )^n - \cot(\theta) \Im(\omega_\phi +  \sqrt{-1} \chi)^{n} = f \chi^{n} $ for an $ \omega_\phi = \omega +  \sqrt{-1} \partial \bar{\partial} \phi $ $ \in \Gamma_{\chi, \theta} $ assuming some conditions on $ f $. In fact, we will first show that there exists an $ \epsilon > 0 $ depending only on $ \dim(M), \theta $ such that if $ f > -\epsilon $, the above equation can be solved for an $ \omega_\phi \in \Gamma_{\chi, \theta} $.
	\\
	
	If $ n = \dim(M) = 1 $, the theorem follows immediately from the fact that $ H^{(1,1)}(M, \mathbb{R}) = \mathbb{R} $: as $ \int_M \omega - \cot(\theta) \chi = \int_M f \chi $, there exists a smooth $ \phi $ such that $ \omega_\phi  - \cot(\theta) \chi = f \chi $. The condition that $ \omega_\phi \in \Gamma_{\chi, \theta} $ is vacuous in this case. \\
	
	If $ n = 2 $, then we have $ \Omega := \omega - \cot(\theta) \chi > 0 $ and we are reduced to solving $ \displaystyle \Omega_\phi^2  = (\csc^2(\theta) + f) \chi^2 $ for an $ \Omega_\phi = \Omega +  \sqrt{-1} \partial \bar{\partial} \phi > 0 $. If $ f > -\csc^{2}(\theta) $, then the right side of the equation is positive, so by \cite{yaucalab}, we can find the required $ \Omega_\phi $. \\
	
	We now come to the main case: $ n = 3 $. Let $ \Omega = \omega - \cot(\theta) \chi $ as before. The condition $ \omega \in \Gamma_{\chi, \theta} $ can be written as 
	\begin{equation}
	\begin{split}
	\label{suffice}
	\Omega &> 0 \\
	\Omega^2 - \csc^{2}(\theta) \chi ^{2} &> 0 
	\end{split} 
	\end{equation}
	
	For simplicity of notation, we will replace $ f $ by $ 2 \csc^2(\theta) f $ so that the differential equation we need to solve becomes 
	\begin{equation}
	\label{maineq}
	\Omega_\phi^3 = 3 \csc^2(\theta)  \chi^2 \Omega_\phi + 2 \csc^2(\theta) (f + \cot(\theta))  \chi ^3
	\end{equation}
	
	The result we will prove is 
	\begin{prop}
		\label{mainthm}
		Define $ \epsilon := \csc(\theta) - |\cot(\theta)| $. If $ f > -\epsilon  $ and $ \int_M f \chi ^{3} \geq 0  $, Equation \ref{maineq} has a solution $ \Omega_\phi \in  C_{\chi, \theta}  $.
	\end{prop}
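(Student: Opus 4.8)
The plan is to solve \eqref{maineq} by the method of continuity, in the spirit of \cite{pin3}. Throughout set $\Omega := \omega - \cot(\theta)\chi$; the hypothesis $\omega \in \Gamma_{\chi,\theta}$ means precisely that $\Omega \in C_{\chi,\theta}$, and in particular $\Omega > 0$. Rewriting \eqref{maineq} as $P_\theta^3(\Omega_{\phi},\chi) = 2\csc^2(\theta) f\chi^3$, I would interpolate between this and the equation trivially solved by $\phi = 0$: for $t \in [0,1]$,
\begin{equation*}
	P_\theta^3(\Omega_{\phi_t},\chi) = t\,2\csc^2(\theta)\,f\chi^3 + (1-t)\,P_\theta^3(\Omega,\chi), \qquad \Omega_{\phi_t} := \Omega + \sqrt{-1}\partial\bar\partial\phi_t \in C_{\chi,\theta}.
\end{equation*}
Using the identity $\int_M P_\theta^3(\Omega,\chi) = \int_M G_\theta^3(\omega,\chi) = 2\csc^2(\theta)\int_M f\chi^3$ (the integral hypothesis), the right-hand side integrates over $M$ to $2\csc^2(\theta)\int_M f\chi^3$ for every $t$, matching the cohomological value of the left-hand side, so the necessary normalization is preserved along the path. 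Writing $\mu_1,\mu_2,\mu_3$ for the eigenvalues of $\Omega_{\phi_t}$ with respect to $\chi$, the condition $\Omega_{\phi_t} \in C_{\chi,\theta}$ reads $\mu_i > 0$ and $\mu_i\mu_j > \csc^2(\theta)$ for $i \neq j$, and on this cone the $\mu_i$-derivative of $P_\theta^3(\cdot,\chi)/\chi^3 = \mu_1\mu_2\mu_3 - \csc^2(\theta)(\mu_1+\mu_2+\mu_3) - 2\csc^2(\theta)\cot(\theta)$ equals $\mu_j\mu_k - \csc^2(\theta) > 0$; the equation is therefore elliptic there, uniformly once the $\mu_i$ are controlled. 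Let $S \subseteq [0,1]$ be the set of $t$ admitting a smooth solution with $\Omega_{\phi_t} \in C_{\chi,\theta}$; then $0 \in S$, and it remains to show $S$ is open and closed.

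Openness is standard: the linearization at a solution is a second-order elliptic operator $L_t\psi = \mathcal{F}^{i\bar j}\psi_{i\bar j}$ with $(\mathcal{F}^{i\bar j})$ positive definite and no zeroth-order term, so on the compact $M$, after incorporating an auxiliary constant to account for the one-dimensional cokernel of $L_t$, it is an isomorphism of the relevant Hölder spaces; the implicit function theorem then gives openness (the auxiliary constant vanishes because the path preserves the integral).

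Closedness requires a priori estimates, uniform in $t \in S$, for the solutions normalized by $\sup_M \phi_t = 0$. A $C^0$ estimate follows because the cone condition forces $\Delta_\chi \phi_t \geq -\operatorname{tr}_\chi \Omega$ while $\int_M \Omega_{\phi_t}\wedge\chi^2$ is cohomologically fixed, so $\phi_t$ is $\chi$-quasi-plurisubharmonic with bounded mass and the Green's-function argument applies; a gradient estimate follows from this by a maximum-principle (or blow-up) argument. The crux is the Laplacian estimate $\sup_M|\Delta_\chi\phi_t| \leq C$, obtained by applying the maximum principle to a quantity such as $\log\lambda_{\max}(\Omega_{\phi_t}) - A\phi_t$ and differentiating \eqref{maineq} twice: the dangerous second-order terms are absorbed using the ellipticity on the cone together with the concavity of the deformed Hermitian Yang--Mills operator in the supercritical regime, equivalently the convexity of $C^m_{\chi,\theta}$ (\cite{yuyuan}, \cite{gao}), which also furnishes the Evans--Krylov $C^{2,\alpha}$ estimate once uniform ellipticity is secured, after which Schauder theory closes the bootstrap. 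The genuinely new terms relative to \cite{pin3} are the first and second covariant derivatives of $f$ produced by differentiating the right-hand side of \eqref{maineq}, and it is exactly their control that forces the hypothesis $f > -\epsilon$. One must also verify that the solutions remain in the open cone $C_{\chi,\theta}$, not merely in its closure; this is a clean pointwise consequence of \eqref{maineq}. Indeed, if $\Omega_{\phi_t}$ touched $\partial C_{\chi,\theta}$ at a point $x_0$, say with $\mu_1\mu_2 = \csc^2(\theta)$ there after ordering, then \eqref{maineq} at $x_0$ forces $\mu_1 + \mu_2 = -2(f(x_0)+\cot(\theta))$, whereas $\mu_1 + \mu_2 \geq 2\sqrt{\mu_1\mu_2} = 2\csc(\theta)$; hence $f(x_0) \leq -\csc(\theta) - \cot(\theta) \leq -(\csc(\theta) - |\cot(\theta)|) = -\epsilon$, contradicting $f > -\epsilon$, and compactness turns this into the required uniform separation from $\partial C_{\chi,\theta}$.

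Combining these, $S = [0,1]$ and \eqref{maineq} has a solution $\Omega_\phi \in C_{\chi,\theta}$, as claimed. I expect the Laplacian estimate to be the main obstacle: the derivative-of-$f$ terms couple to the already delicate second-order structure of the dHYM operator, and it is this coupling that pins down the constant $\epsilon = \csc(\theta) - |\cot(\theta)|$; by comparison, the pointwise argument keeping the solution inside $C_{\chi,\theta}$ is elementary once \eqref{maineq} is in hand.
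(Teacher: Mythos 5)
Your overall skeleton --- method of continuity, openness by the implicit function theorem, closedness via $C^0$/gradient/Laplacian/$C^{2,\alpha}$ estimates, and a pointwise argument that $f>-\epsilon$ keeps solutions strictly inside the cone --- coincides with the paper's. The concrete problem is your choice of continuity path. Writing the right-hand side of your equation at time $t$ as $2\csc^2(\theta)g_t\chi^3$ with $g_t = tf + (1-t)P_\theta^3(\Omega,\chi)/(2\csc^2(\theta)\chi^3)$, every estimate you invoke (and your own cone-preservation argument) needs the effective twist $g_t$ to satisfy $g_t>-\epsilon$, equivalently $z_t:=g_t+\cot(\theta)>-\csc(\theta)$: this is what makes $\lambda_i+\lambda_j+2z_t$ uniformly positive (the analogue of \eqref{sumbound}), what makes the discriminant quantity of Lemma \ref{discpos} positive, and what produces your contradiction ``$f(x_0)\leq-\epsilon$'' at a boundary touching point. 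But $P_\theta^3(\Omega,\chi)/\chi^3$ is \emph{not} bounded below by $-2\csc^2(\theta)\epsilon$ on the cone $C_{\chi,\theta}$: taking eigenvalues $\nu_1=\nu_2=R$ and $\nu_3=\csc^2(\theta)/R+R^{-3}$ gives $\nu_i>0$ and $\nu_i\nu_j>\csc^2(\theta)$ for all $i\neq j$, yet $\nu_1\nu_2\nu_3-\csc^2(\theta)\sum_i\nu_i=-R\csc^2(\theta)+O(R^{-1})\to-\infty$. So for a general admissible $\Omega$ your intermediate equations carry an effective twist far below $-\epsilon$ (indeed below $-\csc(\theta)-\cot(\theta)$), and neither the Laplacian estimate nor the preservation of the cone condition goes through for $0<t<1$. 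The paper instead uses the path \eqref{maincont}, whose right-hand side is $2\csc^2(\theta)(tf+\cot(\theta)+d_t)\chi^3$ with $d_t\geq0$ a cohomologically determined constant, so the effective twist is $tf+d_t>-\epsilon$ for all $t$; the price is that $t=0$ is no longer solved by $\phi=0$ but is the constant-nonnegative-twist equation, whose solvability is supplied by \cite{gao}.

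A secondary remark: you defer the Laplacian estimate, which is where essentially all of the work lies. After differentiating the equation once and twice, one must eliminate $v_3,w_3$ and show that a specific quadratic in the remaining third-order quantities is convex with minimum value bounded below by $-C\lambda_1^8/\mathrm{den}$ with $\mathrm{den}\gtrsim\lambda_1^8$; this rests on Lemma \ref{discpos} and on the uniform positivity of $\lambda_2+\lambda_3+2z_t$, not on a general concavity of the operator $F$ (nor is Evans--Krylov used --- the $C^{2,\alpha}$ bound comes from a Liouville-type blow-up as in \cite{pin3}). As written, the proposal therefore has a genuine gap at the level of the continuity path and leaves the central estimate unproved.
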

	
	Consider the following family of PDE for smooth real functions $ \phi_t $, for $ t \in [0, 1] $:
	\begin{equation}
	\label{maincont}
	\Omega_{\phi_t}^3 = 3 \csc^2(\theta)  \chi^2 \Omega_{\phi_t} + 2 \csc^2(\theta) (tf + \cot(\theta) + d_t)  \chi ^3
	\end{equation} 
	where $ d_t $ is a constant chosen so that the integral of both sides (which depends only on [$ \Omega $], [$  \chi  $], $ t $, and $ \theta $) are equal. It can be seen that
	\begin{equation}
		d_t \int_M \chi^{3} = (1 - t) \int_M f \chi^{3} \geq 0.
	\end{equation}
	 We also write $ z_t = tf + d_t + \cot(\theta) $ from now on. Let $ S $ be the set of all $ t $ in $ [0, 1] $ such that Equation \ref{maincont} has a smooth solution $ \phi_t $ such that 
	\begin{equation}
	\label{subsol}
	\begin{split}
	\Omega_{\phi_t} > 0 \\
	\Omega_{\phi_t}^2 - \csc^{2}(\theta)\chi^2 > 0
	\end{split}	
	\end{equation}
	We will show that $ S = [0, 1] $ by showing that $ S $ is non-empty, open, and closed in $ [0, 1] $. \\
	
	Let $ \phi $ be a solution of Equation \eqref{maineq} at some $ t $ and suppose that a point $ p $ on $ M $, holomorphic normal coordinates for $ \chi $ have been chosen so that $ \Omega_{\phi}(p)  = \sqrt{-1} \sum_i \lambda_{i} dz^{i} \wedge d\bar{z}^{i} $ in these coordinates. Assume that $ \lambda_1 \geq \lambda_2 \geq \lambda_3 $. At $ p $, equation \ref{maincont} and conditions \ref{subsol} can be written as 
	\begin{equation}
	\label{eigenform}
	\begin{split}
	\lambda_{1} \lambda_{2} \lambda_{3} &= \csc^{2}(\theta)(\lambda_1 + \lambda_2 + \lambda_3 + 2z_t) \\
	\lambda_i &> 0, \quad \forall i \\
	\lambda_i \lambda_j &>  \csc^{2}(\theta), \quad \forall i \neq j
	\end{split}
	\end{equation}
	\\
	
	As $ \lambda_1^{2} \geq \lambda_{1} \lambda_{2} >  \csc^{2}(\theta)   $, we see that $ \lambda_1 $ is bounded below independent of $ t $ and similarly for $ \lambda_{2} $. \\
	
	Since $ M $ is compact and $ f > -\epsilon $, we have $ f > -\epsilon + \delta $ for some $ \delta > 0 $ which we can assume to be less than $  \epsilon $ so 
	\begin{equation}
	\label{sumbound}
	\begin{split}
	\lambda_i + \lambda_j + 2z_t &\geq 2(\sqrt{\lambda_{i} \lambda_{j}} + z_t) \\
	&> 2(\csc(\theta) + z_t) \\
	&\geq 2(\csc(\theta) + \cot(\theta) + tf + d_t ) \\
	& > 2(\epsilon + t(\delta - \epsilon)) \\
	& > 2 \delta.
	\end{split}
	\end{equation}
	
	We now proceed with the proof of Theorem \ref{mainthm}.
	\begin{prop}
		\label{open}
		$ S $ is non-empty and open.
	\end{prop}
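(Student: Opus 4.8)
The plan is the usual continuity-method structure: $S$ is non-empty because $0 \in S$, and $S$ is open because near a solution the inverse function theorem applies in Hölder spaces, the linearization being elliptic precisely because of the subsolution inequalities \eqref{subsol}.

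\emph{Non-emptiness.} At $t = 0$ Equation \eqref{maincont} reads $\Omega_{\phi_0}^3 = 3\csc^2(\theta)\chi^2\Omega_{\phi_0} + 2\csc^2(\theta)(\cot\theta + d_0)\chi^3$. Undoing the substitutions $\Omega = \omega - \cot(\theta)\chi$ and $f \mapsto 2\csc^2(\theta)f$, this is exactly the twisted dHYM equation $G_\theta^3(\omega_{\phi_0},\chi) = \bigl(2\csc^2(\theta)\,d_0\bigr)\chi^3$, whose twisting function $2\csc^2(\theta)\,d_0$ is a \emph{non-negative constant}, since $d_0\int_M\chi^3 = \int_M f\chi^3 \geq 0$. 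Hence Proposition \ref{prop:gctwisted}, in the case $n = 3$ with constant non-negative twisting, produces a smooth $\omega_{\phi_0} \in [\omega]$ with $\omega_{\phi_0}\in\Gamma_{\chi,\theta}$ solving it. Because $P_\theta^1(\Omega,\chi) = \Omega$ and $P_\theta^2(\Omega,\chi) = \Omega^2 - \csc^2(\theta)\chi^2$, the membership $\omega_{\phi_0}\in\Gamma_{\chi,\theta}$ is precisely the pair of inequalities \eqref{subsol} for $\Omega_{\phi_0}$, so $0\in S$.

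\emph{Openness.} Fix an integer $k\geq 2$ and $\alpha\in(0,1)$, and let $C_0^{k,\alpha}(M)$ denote the space of $C^{k,\alpha}$ functions of $\chi^n$-average zero. Let $\mathcal{U}\subset\mathbb{R}\times C_0^{k+2,\alpha}(M)$ be the open set of pairs $(t,\psi)$ for which $\Omega_\psi := \Omega + \sqrt{-1}\partial\bar{\partial}\psi$ satisfies \eqref{subsol}, and on $\mathcal{U}$ set
\[
\mathcal{F}(t,\psi) \;=\; \frac{\Omega_\psi^3 - 3\csc^2(\theta)\chi^2\Omega_\psi - 2\csc^2(\theta)(tf + \cot\theta)\chi^3}{\chi^3} \;-\; \Bigl(\text{its }\chi^n\text{-average}\Bigr) \;\in\; C_0^{k,\alpha}(M).
\]
This $\mathcal{F}$ is a smooth map of Banach spaces. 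If $\mathcal{F}(t,\psi) = 0$, then the left side of \eqref{maincont} equals $2\csc^2(\theta)(tf + \cot\theta)\chi^3$ plus a constant multiple of $\chi^3$, and comparing cohomology classes forces that constant to be $2\csc^2(\theta)\,d_t$; so $\psi$ solves \eqref{maincont}, and since on $\mathcal{U}$ the equation is nonlinear elliptic in $\psi$ (its linearization is the operator $L$ below, which is elliptic wherever \eqref{subsol} holds), elliptic bootstrapping makes $\psi$ smooth. Conversely a smooth solution of \eqref{maincont}\,--\,\eqref{subsol}, normalized to average zero, is a zero of $\mathcal{F}(t,\cdot)$. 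Thus $t\in S$ if and only if $\mathcal{F}(t,\cdot)$ has a zero in $\{\psi : (t,\psi)\in\mathcal{U}\}$. Now suppose $t_0\in S$ with corresponding solution $\phi_{t_0}\in C_0^{k+2,\alpha}(M)$, so $\mathcal{F}(t_0,\phi_{t_0}) = 0$; differentiating in $\psi$ gives
\[
L(\eta) \;=\; \frac{3\bigl(\Omega_{\phi_{t_0}}^2 - \csc^2(\theta)\chi^2\bigr)\wedge\sqrt{-1}\partial\bar{\partial}\eta}{\chi^3},
\]
the average-subtraction contributing nothing, because the $(2,2)$-form $\Omega_{\phi_{t_0}}^2 - \csc^2(\theta)\chi^2$ is closed, so $\bigl(\Omega_{\phi_{t_0}}^2 - \csc^2(\theta)\chi^2\bigr)\wedge\sqrt{-1}\partial\bar{\partial}\eta$ is $\sqrt{-1}\partial\bar{\partial}$ of a form and integrates to zero over $M$. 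By \eqref{subsol} this form is strictly positive, so $L$ is second-order elliptic with no zeroth- or first-order term; its kernel on $C^{k+2,\alpha}(M)$ is the constants (strong maximum principle), it is Fredholm of index $0$, and its image lies in $C_0^{k,\alpha}(M)$ by the vanishing just noted, hence equals $C_0^{k,\alpha}(M)$ on dimension grounds. Therefore $L\colon C_0^{k+2,\alpha}(M)\to C_0^{k,\alpha}(M)$ is an isomorphism, and the inverse function theorem gives a family $\psi_t\in C_0^{k+2,\alpha}(M)$ with $\mathcal{F}(t,\psi_t) = 0$ and $\psi_t\to\phi_{t_0}$ for $t$ near $t_0$; by continuity $(t,\psi_t)\in\mathcal{U}$ for such $t$, so \eqref{subsol} persists and a relative neighborhood of $t_0$ in $[0,1]$ lies in $S$.

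\emph{Where the difficulty lies.} The substantive point in openness is that the linearization $L$ is elliptic, which is exactly where the second inequality in \eqref{subsol}, $\Omega_{\phi_{t_0}}^2 - \csc^2(\theta)\chi^2 > 0$, is used, together with bookkeeping that ties a zero of $\mathcal{F}$ to \eqref{maincont} with the correct constant $d_t$. In non-emptiness, the point to be careful about is that invoking Proposition \ref{prop:gctwisted} needs a representative of $[\omega]$ lying in $\Gamma_{\chi,\theta,\Theta}$, not merely in $\Gamma_{\chi,\theta}$; one must check this is available. Neither of these is a deep obstacle — the genuinely hard analysis, namely the Laplacian estimate in the presence of the twisting term $f$, is what is needed for the closedness of $S$, not for this proposition.
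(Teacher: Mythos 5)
Your proof is correct and follows essentially the same route as the paper: non-emptiness via the constant non-negative-twist case of Proposition \ref{prop:gctwisted} at $t=0$ (the paper simply cites \cite{gao}), and openness via ellipticity of the linearization $Lu = 3\left(\Omega_{\phi_t}^2 - \csc^2(\theta)\chi^2\right)\wedge\sqrt{-1}\partial\bar{\partial}u$, which is exactly where \eqref{subsol} enters, followed by the implicit function theorem — you merely spell out the H\"older-space, Fredholm, and $d_t$ bookkeeping that the paper leaves implicit. The hypothesis mismatch you flag ($\Gamma_{\chi,\theta,\Theta}$ versus the available $\Gamma_{\chi,\theta}$ when invoking the $t=0$ existence result) is likewise left unaddressed by the paper's one-line citation of \cite{gao}, so it does not distinguish your argument from theirs.
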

	\begin{proof}
		At $ t = 0 $, Equation \eqref{maincont} reduces to $ \Omega_{\phi}^3 - 3 \csc^2(\theta)  \chi^2 \Omega_{\phi} - 2 \csc^{2}(\theta) \cot(\theta) \chi^{3} = 2 \csc^2(\theta) d_0 \chi^{3}  $. As $ d_0 \geq 0 $ is a constant, $ \Omega > 0$, $ \Omega^2 - \csc^2(\theta)  \chi ^2 > 0 $, by \cite{gao} the equation has a solution $ \phi $ at $ t = 0 $ with $ \Omega_{\phi} > 0  $ and $ \Omega_{\phi}^{2} - \csc^{2}(\theta) \chi^2 > 0 $. \\
		
		The linearization of Equation \ref{maincont} at time $ t \in S $ is $ Lu = (3\Omega_{\phi_t}^2 - 3  \csc^2(\theta)  \chi ^2) \sqrt{-1} \partial \bar{\partial} u $. By the definition of $ S $, this linear operator is strongly elliptic and hence there exist solutions $ \phi_s $ for all $ s $ close to $ t $ which depend smoothly on $ s $, so the conditions continue to hold for these solutions. Thus, $ S $ is open.  
	\end{proof}
	
	To prove that $ S $ is closed, we will need to find various bounds on the derivatives of the solutions as is usual in the method of continuity. 
	\begin{prop}[$C_0$ bounds]
		\label{C0bound}
		If $ \phi_t $ is a solution of Equation \eqref{maincont} for some $ t $ with $ \sup \phi_t = 0 $, then $ \inf \phi_t \geq -C $, where the constant $ C > 0 $ is independent $ t $ and $ \phi_t $.
	\end{prop}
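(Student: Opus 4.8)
The plan is to prove a uniform estimate of the form $-\inf_M \phi_t \le C\big(1+\|\phi_t\|_{L^1}\big)$ via a Moser iteration and then supply an a priori $L^1$-bound, following the strategy of \cite{pin3}. Since $n=3$ and $\omega\in\Gamma_{\chi,\theta}$, the background form $\Omega := \omega-\cot(\theta)\chi = P^1_\theta(\Omega,\chi)$ is a genuine K\"ahler form on $M$; as every solution satisfies $\Omega_{\phi_t}=\Omega+\sqrt{-1}\partial\bar\partial\phi_t>0$, each $\phi_t$ is $\Omega$-plurisubharmonic, so after the normalization $\sup_M\phi_t=0$ the standard compactness of $\Omega$-psh functions yields $\|\phi_t\|_{L^1(\chi^3)}\le C$ uniformly in $t$. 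It therefore suffices to establish the Moser-type estimate.

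To that end I would first rewrite \eqref{maincont}. Expanding $P^3_\theta(\Omega+\sqrt{-1}\partial\bar\partial\phi_t,\chi)$ by the binomial identity \eqref{binomsum} and using $P^1_\theta(\Omega,\chi)=\Omega$, $P^2_\theta(\Omega,\chi)=\Omega^2-\csc^2(\theta)\chi^2$ gives
\[
P^3_\theta(\Omega_{\phi_t},\chi)=P^3_\theta(\Omega,\chi)+\sqrt{-1}\partial\bar\partial\phi_t\wedge W_t,\qquad W_t:=\Omega_{\phi_t}^2+\Omega_{\phi_t}\wedge\Omega+\Omega^2-3\csc^2(\theta)\chi^2 .
\]
Since $P^3_\theta(\Omega_{\phi_t},\chi)$ equals the bounded form $2\csc^2(\theta)(tf+d_t)\chi^3$ and $P^3_\theta(\Omega,\chi)$ is fixed, this reads $\sqrt{-1}\partial\bar\partial\phi_t\wedge W_t=\Psi_t$ with $|\Psi_t|\le C\chi^3$ uniformly in $t$, and $W_t$ is $d$-closed because $\Omega_{\phi_t},\Omega,\chi$ all are. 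The crucial point — and, I expect, the main obstacle — is the uniform positivity $W_t\ge \csc^2(\theta)\,\chi^2$. One decomposes
\[
W_t=\big(\Omega_{\phi_t}^2-\csc^2(\theta)\chi^2\big)+\big(\Omega^2-\csc^2(\theta)\chi^2\big)+\big(\Omega_{\phi_t}\wedge\Omega-\csc^2(\theta)\chi^2\big):
\]
the first two brackets are $P^2_\theta(\Omega_{\phi_t},\chi)$ and $P^2_\theta(\Omega,\chi)$, hence $\ge 0$ because $\omega_{\phi_t},\omega\in\Gamma_{\chi,\theta}$. For the mixed term one argues pointwise: in a $\chi$-orthonormal coframe, restricting the $(2,2)$-forms to an arbitrary complex $2$-plane reduces the claim to the inequality $\det(B)\operatorname{tr}(AB^{-1})\ge\csc^2(\theta)$ for positive Hermitian $2\times2$ matrices $A,B$ with $\det A,\det B\ge\csc^2(\theta)$, which follows from $\operatorname{tr}(AB^{-1})\ge 2\sqrt{\det A/\det B}$; this in fact gives $\Omega_{\phi_t}\wedge\Omega-\csc^2(\theta)\chi^2\ge\csc^2(\theta)\chi^2$, whence $W_t\ge\csc^2(\theta)\chi^2$. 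This step genuinely requires the cone condition at \emph{both} $\Omega_{\phi_t}$ and the fixed $\Omega$, since no Laplacian bound, nor any control of the largest eigenvalue of $\Omega_{\phi_t}$, is available at this stage.

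Granted this positivity, the iteration is routine. For $p\ge1$ I would multiply $\sqrt{-1}\partial\bar\partial\phi_t\wedge W_t=\Psi_t$ by $(-\phi_t)^p\ge0$, integrate over $M$, and integrate by parts (valid because $W_t$ is closed and $M$ is compact), obtaining
\[
p\int_M(-\phi_t)^{p-1}\,\sqrt{-1}\partial\phi_t\wedge\bar\partial\phi_t\wedge W_t=\int_M(-\phi_t)^p\,\Psi_t\ \le\ C\int_M(-\phi_t)^p\,\chi^3 .
\]
Using $W_t\ge\csc^2(\theta)\chi^2$ and $\sqrt{-1}\partial\phi_t\wedge\bar\partial\phi_t\wedge\chi^2=\tfrac13|\partial\phi_t|_\chi^2\,\chi^3$, the left side is bounded below by $\tfrac{c}{p}\int_M|\partial((-\phi_t)^{(p+1)/2})|_\chi^2\,\chi^3$ with $c=c(\theta)>0$. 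Writing $w=(-\phi_t)^{(p+1)/2}$ this becomes $\|\partial w\|_{L^2}^2\le Cp\,\|w\|_{L^{2p/(p+1)}}^{2p/(p+1)}$, and since $2p/(p+1)<2$ the Sobolev inequality on $(M,\chi)$ combined with the usual geometric iteration of exponents produces $-\inf_M\phi_t=\|\phi_t\|_{L^\infty}\le C\big(1+\|\phi_t\|_{L^1}\big)\le C$, uniformly in $t$, as required.
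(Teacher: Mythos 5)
Your argument is correct in substance, but it takes a genuinely different route from the paper: the paper disposes of this proposition by deferring to Lemma 3.1 and Proposition 3.1 of \cite{pin3}, which rest on the $L^\infty$ techniques of \cite{blocki1}, \cite{blocki2}, whereas you give a self-contained Yau-style Moser iteration. The structural inputs you need are all sound: $W_t$ is closed, the right-hand side $\Psi_t=2\csc^2(\theta)(tf+d_t)\chi^3-P^3_\theta(\Omega,\chi)$ is uniformly bounded (since $d_t$ is), the uniform $L^1$ bound follows from $\Omega$-plurisubharmonicity of $\phi_t$ with respect to the fixed K\"ahler form $\Omega=\omega-\cot(\theta)\chi$ together with $\sup\phi_t=0$, and in dimension $3$ weak positivity of a $(2,2)$-form is indeed detected by nonnegativity of its restrictions to complex $2$-planes, so the integration by parts and the wedge with $\sqrt{-1}\,\partial\phi_t\wedge\bar\partial\phi_t$ go through once a uniform bound $W_t\ge c\,\chi^2$ is in hand. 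What this buys over the paper's citation is an explicit, elementary proof whose only nonstandard ingredient is precisely that cofactor positivity; what it uses beyond the literal hypothesis of the proposition is the cone condition \ref{subsol} on the solution, exactly as the paper's intended application along the continuity path does.

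One quantitative slip should be corrected, though it does not damage the proof. Normalizing $\chi|_V$ to the identity on a $2$-plane $V$, one has $\Omega_{\phi_t}|_V\wedge\Omega|_V=\tfrac12\det(B)\,\mathrm{tr}(AB^{-1})\,(\chi|_V)^2$ (check with $A=B$, which must return $\det(A)(\chi|_V)^2$), so $\mathrm{tr}(AB^{-1})\ge 2\sqrt{\det A/\det B}$ together with $\det A,\det B\ge\csc^2(\theta)$ yields exactly $\Omega_{\phi_t}\wedge\Omega\ge\csc^2(\theta)\chi^2$, i.e.\ the mixed bracket is $\ge 0$; your claimed strengthening $\Omega_{\phi_t}\wedge\Omega-\csc^2(\theta)\chi^2\ge\csc^2(\theta)\chi^2$ is false in general (take $A=B=\csc(\theta)I$), and with the corrected constant your three-term decomposition only gives $W_t\ge 0$. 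The repair is immediate: $P^2_\theta(\Omega,\chi)=\Omega^2-\csc^2(\theta)\chi^2$ is a fixed, strictly positive $(2,2)$-form on the compact manifold $M$, hence $P^2_\theta(\Omega,\chi)\ge c_0\chi^2$ for some $c_0>0$ depending only on the background data, and therefore $W_t\ge P^2_\theta(\Omega_{\phi_t},\chi)+P^2_\theta(\Omega,\chi)\ge c_0\chi^2$ uniformly in $t$, which is all your iteration requires.
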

	\begin{proof}
		The proof of this Proposition is the same as that of Lemma 3.1 and Proposition 3.1 of \cite{pin3}, based on \cite{blocki1}, \cite{blocki2}. 
	\end{proof}
	
	For a positive definite matrix $ A $ and a real parameter $ h $, we define 
	\begin{equation}
	\label{Fdef}
	F(h, A) = \frac{ \text{tr}(A) + 2h}{\text{det}(A)}
	\end{equation}
	If $ A = \chi^{-1} \Omega_{\phi_t} $, Equation \eqref{maincont} can then be written as 
	\begin{equation}
	\label{Feq}
	F(z_t(x), A(x)) = \sin^{2}(\theta) 
	\end{equation}  
	\begin{prop}[Laplacian estimates]
		\label{Lapbound}
		For a solution $ \phi_t $ of \ref{maincont}, we have $$ || \Delta \phi_t ||_{C_0} \leq C(1 + ||\nabla \phi_t||_{C_0}^2) $$ where $ C $ is independent of $ t $.
	\end{prop}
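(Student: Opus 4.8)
The plan is to run the standard Yau-style second-order estimate adapted to the operator $F(h,A) = (\mathrm{tr}(A)+2h)/\det(A)$ appearing in \eqref{Feq}, where $A = \chi^{-1}\Omega_{\phi_t}$. Since $F$ is, up to the additive twisting term $2h$, a concave (indeed, inverse-Monge-Amp\`ere--type) function of the eigenvalues, I would work at a point $p$ where the quantity $e^{-\Lambda \phi_t}\,\mathrm{tr}_\chi \Omega_{\phi_t}$ (or equivalently the largest eigenvalue $\lambda_1$, with $\lambda_1 \ge \lambda_2 \ge \lambda_3$) attains its maximum, for a large constant $\Lambda$ to be chosen. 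First I would set up normal coordinates for $\chi$ at $p$ diagonalizing $\Omega_{\phi_t}$ as in \eqref{eigenform}, so that the PDE reads $\lambda_1\lambda_2\lambda_3 = \csc^2(\theta)(\lambda_1+\lambda_2+\lambda_3 + 2z_t)$. The key structural facts I will exploit, all available from the earlier discussion: (i) $\lambda_2,\lambda_3$ are bounded \emph{below} away from zero uniformly in $t$ (from $\lambda_i\lambda_j > \csc^2(\theta)$ and the ordering), and in fact $\lambda_i+\lambda_j+2z_t > 2\delta > 0$ by \eqref{sumbound}; (ii) consequently $\lambda_1$ is comparable to $\mathrm{tr}_\chi\Omega_{\phi_t}$ and to $\Delta\phi_t$ up to controlled lower-order terms; (iii) the linearized operator $L^{ij} = \partial F/\partial A_{ij}$ is positive definite with the $\lambda_1$-direction coefficient bounded below (this is where concavity in the Monge-Amp\`ere variables enters).

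The core computation is to apply $L = L^{ij}\nabla_i\nabla_j$ to $\log(\mathrm{tr}_\chi\Omega_{\phi_t}) - \Lambda\phi_t$ at $p$ and use $L(\mathrm{tr}_\chi\Omega_{\phi_t}) \ge -C\,\mathrm{tr}_\chi\Omega_{\phi_t}\cdot(1 + \text{curvature of }\chi) + (\text{third-derivative terms})$ together with the concavity of $F$ in the appropriate variables to absorb the third-order terms, exactly as in the classical argument (Yau, Aubin) and as in \cite{pin3} for the untwisted case. Differentiating the equation $F(z_t,A) = \sin^2\theta$ once gives $L^{ij}\nabla_k A_{ij} = -\partial_h F\cdot\nabla_k z_t = -\partial_h F\cdot t\,\nabla_k f$, and differentiating twice controls $L^{ij}\nabla_k\nabla_{\bar k}A_{ij}$ with an extra term $\partial_h F\,\nabla_k\nabla_{\bar k}z_t = t\,\partial_h F\,\nabla_k\nabla_{\bar k} f$ plus $\partial_h^2 F(\nabla_k z_t)^2 = 0$ (since $F$ is linear in $h$). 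These twisting contributions are smooth, bounded, and, crucially, do \emph{not} depend on $\phi_t$ — they contribute only $O(\mathrm{tr}_\chi\Omega_{\phi_t})$ and $O(1)$ terms after multiplying through by $\mathrm{tr}_\chi\Omega_{\phi_t}$, hence get absorbed. The term $-\Lambda L^{ij}\nabla_i\nabla_{\bar j}\phi_t = -\Lambda L^{ij}((\Omega_{\phi_t})_{i\bar j} - \omega_{i\bar j})$ produces, via $\sum_i L^{ii}\lambda_i = F + h\,\partial_h F$-type identities, a strictly negative term $-\Lambda \cdot c \cdot \sum L^{ii} + (\text{bounded})$; choosing $\Lambda$ large makes the total $\le 0$, forcing an upper bound $\lambda_1 \le C(1 + \|\nabla\phi_t\|_{C^0}^2)$ at $p$, and hence everywhere, since the gradient terms arise only through the standard $e^{-\Lambda\phi_t}$ normalization and the lower bound on $\phi_t$ is already fixed by Proposition \ref{C0bound}.

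The main obstacle — and the place where the twisting genuinely changes the analysis compared to \cite{pin3} — is ensuring that the positivity/coercivity of the linearized operator $L^{ij}$ is uniform along the continuity path despite the presence of $z_t$, and that the sign of the key term $\sum_i L^{ii}\lambda_i^2$ (or whichever combination appears after differentiating $\log\mathrm{tr}_\chi\Omega_{\phi_t}$) is controlled. Concretely, one must check that $\partial_h F = 2/\det(A) > 0$ stays bounded above and below: the lower bound follows since $\lambda_2\lambda_3$ is bounded above (because $\lambda_2\lambda_3 = \csc^2\theta(\lambda_1+\lambda_2+\lambda_3+2z_t)/\lambda_1 \le \csc^2\theta(3 + 2z_t/\lambda_1 \cdot\text{stuff})$, needing the lower bound on $\lambda_1$), and the upper bound on $\partial_h F$ needs $\det(A)$ bounded below, i.e. $\lambda_2,\lambda_3$ bounded below, which we have, while $\lambda_1$ could a priori be large — but that only makes $\det(A)$ large, which is harmless. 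I would handle the third-order terms by the now-standard trick (splitting the sum over indices where $\lambda_i$ is close to $\lambda_1$ versus bounded away, using concavity of $\log\det$ only on the "good" block), taking care that the twisting term, being $\phi_t$-independent and entering $F$ only linearly, never spoils concavity. Once these uniform bounds on $L^{ij}$ are in hand, the rest is the routine maximum-principle bookkeeping, and the gradient-squared dependence on the right-hand side is exactly the expected artifact of not yet having a gradient estimate.
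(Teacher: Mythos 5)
Your overall skeleton (maximum principle applied to $\log$ of the top eigenvalue/trace minus a function of $\phi_t$, differentiating $F(z_t,A)=\sin^2\theta$ once and twice, with the gradient-squared on the right coming from the normalization term) matches the paper's strategy, which follows \cite{pin3}. But there is a genuine gap precisely at the point the paper identifies as the new difficulty. When you differentiate the equation twice, besides $\partial_h F\,\nabla_1\nabla_{\bar 1}z_t$ (bounded, as you say) and $\partial_h^2F\,(\nabla z_t)^2=0$, there is the mixed term
\begin{equation*}
\sum_{\mu,\nu}\frac{\partial^2 F}{\partial h\,\partial A_{\mu\nu}}\Bigl(\nabla_1 z_t\,\nabla_{\bar 1}A_{\mu\nu}+\nabla_{\bar 1}z_t\,\nabla_{1}A_{\mu\nu}\Bigr)
= -\sum_\mu \frac{2t}{\lambda_\mu \det(A)}\bigl(f_{,1}(\Omega_\phi)_{\mu\bar\mu,\bar 1}+f_{,\bar 1}(\Omega_\phi)_{\mu\bar\mu,1}\bigr),
\end{equation*}
which is \emph{linear in the uncontrolled third derivatives of $\phi_t$}. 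Your proposal omits this term entirely and asserts that the twisting contributions "do not depend on $\phi_t$" and are absorbable as $O(\mathrm{tr}_\chi\Omega_{\phi_t})+O(1)$; that is false for this term, and it cannot be absorbed by a generic Cauchy--Schwarz either, because the "good" quadratic third-order terms produced by $\partial^2F/\partial A\,\partial A$ are not positive definite on the nose. The paper's actual work is: use the once-differentiated equation to eliminate $v_3,w_3$, observe that what remains is a quadratic $av_1^2+2bv_1v_2+cv_2^2+dv_1+ev_2$ (and its $w$-analogue) whose linear coefficients $d,e$ are proportional to $t\nabla f$, prove via Lemma \ref{discpos} (a constrained Lagrange-multiplier argument on the set $\lambda_1\lambda_2\lambda_3=\csc^2\theta(\sum_i\lambda_i+2z_t)$, $\lambda_i\lambda_j>\csc^2\theta$) that the relevant discriminant quantity $g$ is positive, and then show by degree counting in $\lambda_1$ (numerator $\ge -C\lambda_1^8$, denominator $\gtrsim\lambda_1^8$) that the minimum of this quadratic is bounded below by a uniform constant. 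None of this is replaced by anything in your sketch.

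A related problem is your appeal to concavity: $F(h,A)=(\mathrm{tr}A+2h)/\det A$ is not concave in $A$, and neither \cite{pin3} nor this paper uses concavity to absorb third-order terms; what plays its role is exactly the quadratic-form analysis above (Lemma~3.2 of \cite{pin3} in the untwisted case, and its modification here). Also, your structural fact (i) is inaccurate: only $\lambda_1,\lambda_2$ are uniformly bounded below; $\lambda_3$ can degenerate (indeed $\lambda_2\lambda_3-\csc^2\theta\to 0$ as $\lambda_1\to\infty$), although $\det(A)=\csc^2\theta(\sum_i\lambda_i+2z_t)$ is still bounded below by \eqref{sumbound}, so the conclusion you want about $\partial_hF$ survives for a different reason.
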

	\begin{proof}
		This proposition too follows the same method as \cite{pin3}, but the presence of some extra third order derivatives of $ \phi $ due to $ f $ not necessarily being constant necessitates some more delicate estimates.
		\\
		
		Let $ \lambda_{1} $ be the largest eigenvalue of $  \chi ^{-1} \Omega_{\phi_t} $ and let $$ \psi(x, \phi) = -\gamma(\phi(x)) + \log(\lambda_{1}(x)) $$ where $$ \gamma(u)=2 Au-\frac{A \tau}{2}u^{2} $$with $ 0 \leq u \leq C_0 $
		(We normalize the $ \phi $ by adding a constant so that its range lies in $ [0, C_0] $ for some uniform $ C_0 > 0 $). The constants $ A $ and $ \tau $ will be chosen as in \cite{pin3} \\
		
		If $ \phi $ is a solution to Equation \eqref{maincont} at some $ t $, then $ \Delta \phi > -C $ for some $ C $ independent of $ t $ as $ \Omega_{\phi} > 0 $. Hence, if the proposition is false, there exists a sequence $ t_n $ and solutions $ \phi_n $ to Equation \eqref{maincont} such that $ \max \Delta \phi_n > n(1 + ||\nabla \phi_n||_{C_0}^2) $. Let $ \psi_n(x) = \psi(x, \phi_n) $. Let $ p_n $ be a point where $ \psi_n $ attains its maximum. As $ \gamma $ is bounded independent of $ n $ and $ 3 \max \lambda_{1, n} \geq \max \Delta \phi_n $ is unbounded, we see that $ \phi_n(p_n) $ and hence $\lambda_{1, n}(p_n) $ is unbounded. We suppress the subscript $ n $ from here on.\\
		
		We now note that because $$ \lambda_{3}  = \frac{\csc^{2}(\theta)(\lambda_{1} + \lambda_2 + 2z_t)}{\lambda_{1} \lambda_2 -  \csc^{2}(\theta)}$$ and as $ \lambda_2 $ is bounded below by positive constant, we have  
		\begin{equation}
		\label{prodlim}
		\lambda_3 \lambda_2 -  \csc^{2}(\theta) \rightarrow 0 
		\end{equation}
		when $ \lambda_1 \rightarrow \infty $. \\

		Let $ p \in M $ and assume that holomorphic normal coordinates have been chosen for $ \chi $ at $ p $ such that $ \Omega_{\phi_t}(p) = \sum_{i} \sqrt{-1} \lambda_i dz^i \wedge d\bar{z}^i $ is diagonal at $ p $. Also assume $ \lambda_1 \geq \lambda_2 \geq \lambda_3 $. In what follows, we will write $ \phi $ instead of $ \phi_t $. If $ p $ is the point where $ \psi $ attains its maximum, we wish to differentiate $ \psi $ at $ p $ and apply the maximum principle but $ \lambda_{1} $ need not be smooth at $ p $, so the standard remedy here is to first perturb $ \Omega_{\phi} $ in a neighborhood of $ p $. Define (on a neighborhood of $ p $) $ \displaystyle \tilde{\Omega} := \Omega_{\phi} -  \sqrt{-1} B_2 dz^2 \wedge d \bar{z}^2 -  \sqrt{-1}  B_3 dz^3 \wedge d\bar{z}^3 $ where $ B_2, B_3 $ are small constants so that $ \displaystyle \tilde{\Omega} $ is positive definite in a neighborhood of $ p $ and such that $ \chi ^{-1} \tilde{\Omega} $ has eigenvalues $ \tilde{\lambda}_1 \geq \tilde{\lambda}_2 \geq \tilde{\lambda}_3  $. At $ p $, $ \lambda_1 = \tilde{\lambda}_1 $. The function $ \displaystyle \tilde{\psi} $ obtained by replacing $ \lambda_{1} $ with $ \tilde{\lambda}_1 $ in the definition of $ \psi $ also attains a maximum at $ p $. Differentiating $ \displaystyle \tilde{\psi} $ at $ p $, we get $$ 0 = -\gamma' \phi_{,\mu} + \frac{1}{\lambda_1} (\Omega_{\phi})_{11, \mu} $$ and $$ 0 \geq -\frac{\partial F}{\partial \lambda_\mu} \tilde{\psi}_{,\mu \bar{\mu}} $$
		
		As in \cite{pin3}, this last inequality can be written as $ 0 \geq \alpha + \beta $, where 
		\begin{equation}
		\label{decompa}
		\begin{split}
		\alpha & =  \sum_{\mu}\left(-\frac{\partial F}{\partial \lambda_{\mu}}\right)\left(\frac{1}{\lambda_{1}} \tilde{\lambda}_{1, \mu}\right)_{, \bar{\mu}} \\
		& = \sum_{\mu} \left( -\frac{\partial F}{\partial \lambda_{\mu}} \right) \left(  -\frac{1}{ \lambda_{1}^{2} }|(\Omega_{\phi})_{1\bar{1}, \mu}|^{2} \right.  + \\ &  \left. \frac{1}{\lambda_{1}} \left[ R^{1 \bar{1}}_{\mu \bar{\mu}} + (\Omega_{\phi})_{\mu \bar{\mu}, 1 \bar{1}} + \sum_{q > 1}\frac{|(\Omega_{\phi})_{q\bar{1}, \mu}|^{2} + |(\Omega_{\phi})_{1\bar{q}, \mu}|^{2}}{\lambda_{1} - \tilde{\lambda_{q}}} \right] \right) 
		\end{split}		
		\end{equation}
		and $$ \beta = \sum_{\mu} \frac{\partial F}{\partial \lambda_{\mu}} (\gamma' \phi_{,\mu})_{,\bar{\mu}} $$
		
		The $ \beta $ term is dealt with exactly the same way as the $ B $ term in \cite{pin3}. For $ \alpha $, since instead of Equations 3.23 and 3.24 in \cite{pin3}, we have \eqref{linconst} and \eqref{doublediff} below, we use a modification of Lemma 3.2 in \cite{pin3}. \\

		We will now differentiate the equation $ F(z(x), A(x)) = \sin^2(\theta) $ twice at the point $ x = p $, so we make the following computations in advance. When the matrix $ A $ is diagonal, i.e. $ A_{\mu {\nu}} = \lambda_\mu \delta_{\mu \nu } $, we have: \\
		
		\begin{equation*}
		 \frac{ \partial F}{\partial A_{\mu {\nu}} } = \frac{ \delta_{\mu \nu } - ( \text{Tr}(A) + 2 z_t)(A^{-1})_{\nu {\mu} }}{\text{det}(A)} 
		\end{equation*}
		 		
		 \begin{equation*}
		 \frac{ \partial^2 F}{ \partial A_{ \alpha {\beta} } \partial A_{\mu {\nu} } } = \frac{1}{\text{det}(A)} \bigg(-\frac{ \delta_{\mu \nu } \delta_{\alpha \beta }}{\lambda_\alpha} -\frac{ \delta_{\mu \nu } \delta_{\alpha \beta }}{\lambda_\mu} + ( \text{Tr}(A) + 2z_t )\bigg(\frac{\delta_{\mu \nu } \delta_{\alpha \beta } + \delta_{\alpha \nu } \delta_{\mu \beta } }{ \lambda_\mu \lambda_\alpha } \bigg) \bigg) 	
		 \end{equation*}
		
		 \begin{equation*}
		 \frac{\partial F}{\partial h} = \frac{2}{\text{det}(A)}
		 \end{equation*} 
			
		\begin{equation*}
		\frac{\partial^{2} F}{\partial h \partial A_{\mu {\nu}}} = \frac{2}{\text{det}(A)}(A^{-1})_{\nu {\mu}}
		\end{equation*} 	
		
		When $ A(x)_{\mu \nu} = (\Omega_\phi)_{\mu \bar{\beta}}(x) \omega^{\bar{\beta} \nu}(x) $, we have $ F(z(x), A(x)) = \sin^{2}(\theta) $ for all $ x $. We now differentiate the function $ x \mapsto F(z(x), A(x)) $ at the point $ p \in M $ with respect to the coordinate $ z^1 $ to get:
		
		\begin{equation}
		\label{z1deriv}
			\frac{2t}{\text{det}(A)} f_{, 1} + \sum \frac{\partial F}{\partial A_{\mu \mu}} (\Omega_\phi)_{\mu \bar{\mu}, 1} = 0
		\end{equation}
		where $ {}_{, 1} $ denotes the partial derivative with respect to $ z^1 $. If $ z^1 = x +  \sqrt{-1} y $, then we have $ \partial_1 = \displaystyle \frac{\partial_x - \sqrt{-1} \partial_y}{2} $ and $ \displaystyle \partial_{\bar{1}} = \frac{\partial_x + \sqrt{-1} \partial_y}{2} $. Equating real and imaginary parts in the equation \ref{z1deriv} gives 
		\begin{equation}
		\label{linconst}
		\sum_{\mu} ( \sum_{i \neq \mu} \lambda_{i} + 2z_t)\frac{(\Omega_{\phi})_{\mu \bar{\mu}, j}}{\lambda_{\mu}} = 2tf_j
		\end{equation}
		for $ j = x, y $. Defining $ \displaystyle v_\mu := \frac{(\Omega_{\phi})_{\mu \bar{\mu}, x}}{\lambda_{\mu}} $ and $ \displaystyle w_\mu := \frac{(\Omega_{\phi})_{\mu \bar{\mu}, y}}{\lambda_{\mu}} $, equations \ref{linconst} can be written as:
		
		\begin{equation}
		\label{xeliminate}
			\sum_{ \mu }v_\mu ( \sum_{k \neq \mu } \lambda_k + 2z_t) = 2t f_{, x}
		\end{equation}
		\begin{equation}
		\label{yeliminate}
		\sum_{ \mu }w_\mu ( \sum_{k \neq \mu } \lambda_k + 2z_t) = 2t f_{, y}
		\end{equation}
		
		Now we differentiate $ x \mapsto F(z(x), A(x)) $ twice: first with respect to $ z^1 $ and then with respect to $ \bar{z}^1 $ to get (recall that $ \omega_{, 1}(p) = 0 $ as we have chosen holomorphic normal coordinates at $ p $)
		
		\begin{equation}
		\label{doublediff}
		\begin{split}
		-\sum_{\mu} \frac{\partial F}{ \partial \lambda_{\mu}} (\Omega_{\phi})_{\mu \bar{\mu}, 1 \bar{1}} = &\sum_\mu \frac{\partial F}{ \partial \lambda_{\mu}} \lambda_{\mu} R^{\mu \bar{\mu}}_{1 \bar{1}} + \color{blue} \sum_{\mu, \nu, \alpha, \beta} \frac{\partial^2 F}{\partial A_{\mu \bar{\nu}} \partial A_{\alpha \bar{\beta}}} (\Omega_{\phi})_{\mu \bar{\nu}, 1} (\Omega_{\phi})_{\alpha \bar{\beta}, \bar{1}}  \\& + \color{blue} 
		\sum_{\mu} \left( -\frac{2t}{\lambda_{\mu} \det}(f_1 (\Omega_{\phi})_{\mu \bar{\mu}, \bar{1}} + f_{\bar{1}} (\Omega_{\phi})_{\mu \bar{\mu}, 1} ) \right)  + \color{black} \frac{2t}{\det} f_{1\bar{1}} 
		\end{split}	
		\end{equation}	
			
We now simplify the $ \color{blue} \text{middle two terms}$ in Equation \eqref{doublediff}. As 
\begin{equation*}
	\Omega_{\mu \bar{\nu}, 1} \Omega_{\alpha \bar{\beta}, \bar{1}} =	\frac{1}{4} (\Omega_{\mu \bar{\nu}, x} \Omega_{\alpha \bar{\beta}, x} + \Omega_{\mu \bar{\nu}, y} \Omega_{\alpha \bar{\beta}, y}  +  \sqrt{-1} (\Omega_{\mu \bar{\nu}, x} \Omega_{\alpha \bar{\beta}, y} - \Omega_{\mu \bar{\nu}, y} \Omega_{\alpha \bar{\beta}, x}) ), 
\end{equation*}we see 
\begin{equation*}
	\sum_{\mu, \nu, \alpha, \beta} \frac{\partial^2 F}{\partial A_{\mu {\nu}} \partial A_{\alpha {\beta}}} (\Omega_{\phi})_{\mu \bar{\nu}, 1} (\Omega_{\phi})_{\alpha \bar{\beta}, \bar{1}} = \frac{1}{4} \sum_{\mu, \nu, \alpha, \beta} \frac{\partial^2 F}{\partial A_{\mu {\nu}} \partial A_{\alpha {\beta}}} ( \Omega_{\mu \bar{\nu}, x} \Omega_{\alpha \bar{\beta}, x} + \Omega_{\mu \bar{\nu}, y} \Omega_{\alpha \bar{\beta}, y} ).
\end{equation*}
		
By the expression for $ \displaystyle \frac{\partial^2 F}{\partial A_{\mu {\nu}} \partial A_{\alpha {\beta}}}  $ computed earlier, 
\begin{equation*}
\begin{split}
	\frac{1}{4} \sum_{\mu, \nu, \alpha, \beta} \frac{\partial^2 F}{\partial A_{\mu {\nu}} \partial A_{\alpha {\beta}}}  \Omega_{\mu \bar{\nu}, x} \Omega_{\alpha \bar{\beta}, x} = \\
	\frac{1}{4 \cdot \text{det}(A)} \sum_{ \mu, \alpha } \frac{\Omega_{\mu \bar{\mu}, x} \Omega_{\alpha \bar{\alpha}, x} (\text{tr(A)} - \lambda_\mu - \lambda_\alpha + 2z_t ) }{\lambda_{\mu} \lambda_{\alpha} } +  \\
	\frac{1}{4 \cdot\text{det}(A)} \sum_{ \mu, \alpha } \frac{|\Omega_{\mu \bar{\alpha}, x}|^{2} ( \text{tr(A)} + 2z_t ) }{\lambda_{\mu} \lambda_{\alpha} }	
\end{split}
\end{equation*}
In terms of the $ v_\mu $'s this is equal to 
\begin{equation}
\label{doublex}
\begin{split}
	\frac{1}{4} \sum_{\mu, \nu, \alpha, \beta} \frac{\partial^2 F}{\partial A_{\mu {\nu}} \partial A_{\alpha {\beta}}}  \Omega_{\mu \bar{\nu}, x} \Omega_{\alpha \bar{\beta}, x} = & \\
	\frac{1}{4 \cdot \text{det}(A)} \sum_{ \mu, \alpha } {v_\mu v_\alpha ((\text{tr(A)} - \lambda_\mu - \lambda_\alpha) + 2z_t ) } + &
	\frac{1}{4 \cdot \text{det}(A)} \sum_{ \mu } v_\mu^2( \text{tr}(A) + 2z_t) + \\
	\frac{1}{4 \cdot \text{det}(A)} \sum_{ \mu \neq \alpha } \frac{|\Omega_{\mu \bar{\alpha}, x}|^{2} ( \text{tr(A)} + 2z_t ) }{\lambda_{\mu} \lambda_{\alpha} } &
	\end{split}
\end{equation}
Similarly, we have 
\begin{equation}
\label{doubley}
	\begin{split}
	 \frac{1}{4} \sum_{\mu, \nu, \alpha, \beta} \frac{\partial^2 F}{\partial A_{\mu {\nu}} \partial A_{\alpha {\beta}}}  \Omega_{\mu \bar{\nu}, y} \Omega_{\alpha \bar{\beta}, y}  = &  \\
	 \frac{1}{4 \cdot \text{det}(A)} \sum_{ \mu, \alpha } {w_\mu w_\alpha (\text{tr(A)} - \lambda_\mu - \lambda_\alpha + 2z_t ) } & + 
	 \frac{1}{4 \cdot \text{det}(A)} \sum_{ \mu } w_\mu^2( \text{tr}(A) + 2z_t) +  \\
	  \frac{1}{4 \cdot \text{det}(A)} \sum_{ \mu \neq \alpha } \frac{|\Omega_{\mu \bar{\alpha}, y}|^{2} ( \text{tr(A)} + 2z_t ) }{\lambda_{\mu} \lambda_{\alpha} } &
	\end{split}
\end{equation} 
The other highlighted summation in \ref{doublediff} can be written as
\begin{equation}
	\label{derivf}
	\sum_{\mu} \left( -\frac{2t}{\lambda_{\mu} \det(A)}(f_1 (\Omega_{\phi})_{\mu \bar{\mu}, \bar{1}} + f_{\bar{1}} (\Omega_{\phi})_{\mu \bar{\mu}, 1} ) \right) = - \frac{t}{\det(A)} \sum_{\mu} \left( f_{, x} v_\mu + f_{,y} w_\mu \right)
\end{equation}
Now, using equations \ref{xeliminate} and \ref{yeliminate}, $ v_3 $, $ w_3 $ can be eliminated from equations \ref{doublex}, \ref{doubley}, \ref{derivf} and the resulting equations can be added to give the sum of the highlighted terms in \ref{doublediff} as

		\begin{equation}
		\label{calcresult}
		\begin{split}
		&\frac{ \sum_{i} \lambda_{i} + 2z_t }{\lambda_1 \lambda_2 \lambda_3 (\lambda_1 + \lambda_2 + 2z_t)}\sum_{\mu \neq \alpha} \frac{|(\Omega_{\phi})_{\mu \bar{\alpha}, 1}|^{2}}{\lambda_{\mu}\lambda_{\alpha}}  \\
		&+ \frac{av_1^2 + 2bv_1v_2 + cv_2^2 + dv_1 + ev_2 }{4 \lambda_1 \lambda_2 \lambda_3 (\lambda_1 + \lambda_2 + 2z_t)} \\
		&+ \frac{aw_1^2 + 2bw_1w_2 + cw_2^2 + d'w_1 + e'w_2 }{4 \lambda_1 \lambda_2 \lambda_3 (\lambda_1 + \lambda_2 + 2z_t)}
		\end{split}	
		\end{equation}
		where
		\begin{equation*}
		\label{coeff}
		\begin{split}
		a & = 2(\lambda_2 + \lambda_{3} + 2z_t)( \sum_i \lambda_i + 2z_t) \\
		b & = 2(\lambda_3 + z_t )(\sum_{i} \lambda_i + 2z_t)\\
		c & = 2(\lambda_1 + \lambda_{3} + 2z_t)( \sum_i \lambda_i + 2z_t)\\
		d & = -4tf_x(\lambda_1 + \lambda_2 + \lambda_3 + 2z_t) \\
		e & = -4tf_x(\lambda_1 + \lambda_2 + \lambda_3 + 2z_t)\\
		\end{split}	
		\end{equation*}
		and the corresponding expressions for the quantities $ d', e'$ are obtained by replacing $ x $ by $ y $ in $ d, e $. From now on, we will only deal with the unprimed quadratic as the primed one can be dealt with the same techniques. \\
		
		To estimate the quadratic $av_1^2 + 2bv_1v_2 + cv_2^2 + dv_1 + ev_2$  in Equation \eqref{calcresult}, we first show that these quadratics are convex and bounded below. As $a, c > 0  $, and as the discriminant of the quadratic is $$ \Delta = ( \sum_i \lambda_i + 2z_t)^2( \sum_{i < j} \lambda_{i} \lambda_j + 2z_t \sum_{i} \lambda_i + 3z_t^2), $$it suffices to show that $$g = \sum_{i < j} \lambda_{i} \lambda_j + 2z_t \sum_{i} \lambda_i + 3z_t^2 > 0$$. 
		
		\begin{lem}
			\label{discpos}
			If $ \lambda_i $ satisfy the equation $$ \lambda_{1} \lambda_{2} \lambda_{3} = \csc^{2}(\theta)(\lambda_{1} + \lambda_{2} + \lambda_{3} + 2z_t) $$ and the inequalities $ \lambda_i > 0 $, $ \lambda_{i} \lambda_{j} >  \csc^{2}(\theta)  $, then $ g > 0 $.
		\end{lem}
		\begin{proof}
			
			This is trivial if $ z_t \geq 0 $, so we only consider the $ z < 0 $ case (Hence, we must have $ 0 > z_t  > -\csc(\theta) $). Writing 
			\begin{equation*}
			\begin{split}
			g = ( \lambda_1 + 2z_t)(\lambda_3 + \lambda_2 + 2z_t) + \lambda_2 \lambda_3 - z_t^2 \\ \geq ( \lambda_1 + 2z_t)(\lambda_3 + \lambda_2 + 2z_t) + \csc^2(\theta) - z_t^2 
			\end{split}	
			\end{equation*} we see that $ g > 0 $ on points of $ \bar{C} $ where $  \lambda_{i} + 2z_t \geq 0 $ for any $ i $ because $ \csc^2(\theta) - z_t^2 > 0$. In particular, this holds if any $ \lambda_{i} $ is sufficiently large or small (if, say, $ \lambda_3 $ is small, then $ \lambda_{2} $ is large because $ \lambda_{2} \lambda_{3} >  \csc^{2}(\theta)  $). Also, if, say, $ \lambda_{2} \lambda_{3} -  \csc^{2}(\theta) $ is small, then $ \lambda_{1} $ is large and hence $ g > 0$. So as the $ \lambda_i $s approach the boundary of the constraints, $ g > 0 $.  
			\\
			
			Using Lagrange multipliers, the critical points of $ g $ subject to the constraint $  \sum_i \lambda_i + 2z_t = \sin^2(\theta) \lambda_1 \lambda_2 \lambda_3 $ are seen to occur when $ \lambda_i = \lambda $ for all $ i $ and some constant $ \lambda $. At such points, it is easy to see that $ g > 0 $. Hence, $ g > 0 $ everywhere (on the constrained set).
			
		\end{proof}

		Thus, $av_1^2 + 2bv_1v_2 + cv_2^2 + dv_1 + ev_2$ is bounded below and has a unique critical point. The value of this quadratic at this critical point is seen to be
		\begin{equation}
		\label{minval}
		\begin{split}
		\min = \frac{\text{num}}{\text{den}} = & \frac{ac^2d^2 -ab^2e^2 +a^2ce^2 -b^2cd^2 + 2b^3de - 2abcde}{4\Delta^2} \\ & + \frac{2 \Delta (2bde -cd^2 -ae^2)}{4 \Delta^{2}} 
		\end{split}
		\end{equation}
		
		From the expressions for the coefficients of the quadratic and that of the minimum value, it can be seen that the numerator is a polynomial in the $ \lambda_{i} $s of degree at most $ 8 $ with coefficients which can be bounded uniformly (independent of the point $p$ and $t$). Hence, for large enough $ \lambda_1 $
		\begin{equation}
		\label{numbound}
		\text{num} \geq -C \lambda_{1}^8
		\end{equation}for a uniform constant $ C $. \\
		
		To obtain a positive lower bound for the denominator $ \text{den} = 4 \Delta^2 $, we notice that the coefficient of $ \lambda_{1} $ in $ g $ is $ \displaystyle \lambda_{2} + \lambda_3 + 2z_t $, which is bounded below by a positive constant depending only on $ \epsilon $, $ \delta $ by \ref{sumbound}. \\		
		
		Therefore, for large enough $ \lambda_{1} $, $$ \Delta = 4(\sum_i \lambda_{i} + 2z_t)^2 g \geq C\lambda_1^2 \lambda_1 = C\lambda_{1}^{3} $$ for some uniform constant $ C > 0$. Also, we observe that $ \lambda_1 \lambda_2 \lambda_3 > \lambda_{1}  \csc^{2}(\theta) $ and $ (\lambda_1 + \lambda_2) + 2z_t \geq C\lambda_{1} $ for large enough $ \lambda_{1} $. Combining all these estimates, we see that for large $ \lambda_1 $, $  \lambda_1 \lambda_2 \lambda_3  ((\lambda_1 + \lambda_2) + 2z_t) \text{den} > C\lambda_1^{8}$ and hence,
		
		\begin{equation*}
		\label{resultbound}
		\eqref{calcresult} \geq \frac{ \sum_{i} \lambda_{i} + 2z_t }{\lambda_1 \lambda_2 \lambda_3 ((\lambda_1 + \lambda_2) + 2z_t)}\sum_{\mu \neq \alpha} \frac{|(\Omega_{\phi})_{\mu \bar{\alpha}, 1}|^{2}}{\lambda_{\mu}\lambda_{\alpha}} - C
		\end{equation*}
		for a uniform constant $ C $. \\
		
		From this point onwards, we proceed exactly as in \cite{pin3} and obtain $$ \lambda_{1} < C(1 + |\nabla \phi|^2) $$ for some $ C $ independent of $ t $ and the point $ p $ on $ M $. Thus, we must have $ |\Delta \phi| \leq C(1 + |\nabla \phi|^2) $ for some uniform constant $ C $.
		
	\end{proof}
	The blow-up argument in \cite{cjy} now shows that $ ||\Delta \phi|| < C $ for some uniform constant and we consequently obtain bounds on $ \sqrt{-1} \partial \bar{\partial} \phi $. We also see that the operator $ F $ is uniformly elliptic.
	\\
	
	\begin{prop}[$C^{2, \alpha}$ estimate]
		\label{holder}
		For a solution $ \phi $ of Equation \ref{maincont} at some $ t $, $||\phi||_{C^{2, \alpha}} \leq C $ for some $ C $ independent of $ \phi $ and $ t $.
	\end{prop}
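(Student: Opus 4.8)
The plan is to reduce the estimate to an application of the Evans--Krylov theorem. First I would record what the earlier estimates give: combining Propositions \ref{C0bound} and \ref{Lapbound} with the blow-up argument of \cite{cjy} quoted above, any solution $\phi=\phi_t$ of \eqref{maincont} satisfies $\|\phi\|_{C^0}+\|\sqrt{-1}\partial\bar\partial\phi\|_{C^0}\le C$ with $C$ independent of $t$. Moreover the eigenvalues $\lambda_1\ge\lambda_2\ge\lambda_3$ of $A=\chi^{-1}\Omega_{\phi_t}$ are pinched into a fixed compact subset of $\{\lambda_i>0,\ \lambda_i\lambda_j>\csc^2(\theta)\}$: the upper bound is the Laplacian bound; $\lambda_1,\lambda_2$ are bounded below since $\lambda_1^2\ge\lambda_1\lambda_2>\csc^2(\theta)$; and for each pair $\{i,j\}$ the identity $\lambda_k(\lambda_i\lambda_j-\csc^2(\theta))=\csc^2(\theta)(\lambda_i+\lambda_j+2z_t)$ (from \eqref{eigenform}, $k$ the remaining index) together with \eqref{sumbound} and the upper bound on the $\lambda$'s forces $\lambda_i\lambda_j-\csc^2(\theta)$, and then $\lambda_3$, to be bounded below by a uniform positive constant. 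Hence \eqref{Feq} is a uniformly elliptic fully nonlinear equation along the family, with $t$-independent ellipticity constants, whose only $x$-dependence is through the smooth function $z_t=tf+d_t+\cot(\theta)$, which enters $F$ affinely and obeys $\|z_t\|_{C^2}\le\|f\|_{C^2}+|d_t|+|\cot(\theta)|$, a bound uniform in $t$ because $|d_t|\int_M\chi^n=(1-t)\int_M f\chi^n$.

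Next, to invoke Evans--Krylov I need $F(z_t,\cdot)$ to be concave in the complex Hessian on the compact region just identified. When $z_t\ge0$ this is automatic: $A\mapsto F(h,A)=\text{tr}(A)/\text{det}(A)+2h/\text{det}(A)$ is a convex symmetric function of the eigenvalues of $A$, hence convex in the Hermitian matrix $A$, hence convex in $\sqrt{-1}\partial\bar\partial\phi$ along the affine slice $A=\chi^{-1}(\Omega+\sqrt{-1}\partial\bar\partial\phi)$. When $z_t<0$ --- which does occur when $\theta$ is close to or larger than $\pi/2$ --- convexity of $F$ fails in general, and one must use the cone constraints $\lambda_i\lambda_j>\csc^2(\theta)$ (equivalently, that $A$ lies in the convex set $C_{\chi,\theta}$) to recover concavity of $F$, possibly after a fixed monotone reparametrization. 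This is exactly the structural point established in \cite{pin3} in the untwisted case; adapting it here amounts only to carrying the smooth, $t$-uniformly bounded parameter $z_t$ through the same computation in place of the constant $\cot(\theta)$.

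With concavity, uniform ellipticity, the $C^{1,1}$ bound on $\phi$, and the $C^2$ bound on $z_t$ in hand, the interior Evans--Krylov estimate for fully nonlinear elliptic equations with smooth dependence on $(x,\phi)$ (as used in \cite{pin3} and \cite{cjy}) gives, on each ball of a fixed finite cover of $M$ by coordinate charts, a bound $\|\phi\|_{C^{2,\alpha}}\le C$ for some $\alpha\in(0,1)$, with $C$ depending only on $n$, $\theta$, $\chi$, the ellipticity constants, $\|\sqrt{-1}\partial\bar\partial\phi\|_{C^0}$, $\|f\|_{C^2}$ and $\sup_M|d_t|$, hence independent of $t$; patching the local bounds together gives the global estimate. (A subsequent Schauder bootstrap upgrades this to uniform $C^{k,\alpha}$ bounds for all $k$, which is what is needed to prove $S$ closed.) The one genuine obstacle is the concavity step: $F(z_t,\cdot)$ is not convex in $A$ once $z_t<0$, so one cannot simply quote the positive case and must argue concavity directly from the second-derivative formulas for $F$ recorded above, crucially using the constraints that define membership in $C_{\chi,\theta}$; everything else is routine once this is granted.
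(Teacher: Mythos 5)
Your reduction to uniform ellipticity is fine and matches what the paper has already established at this point (the eigenvalues of $\chi^{-1}\Omega_{\phi_t}$ are pinched into a fixed compact subset of the cone, and $z_t$ is bounded in $C^2$ uniformly in $t$). But your route to $C^{2,\alpha}$ is not the paper's, and it has a genuine gap exactly where you flag one. The paper does \emph{not} invoke Evans--Krylov. It argues by contradiction: if the $C^{2,\alpha}$ norms blow up, a rescaling as in \cite{cjy} produces an entire function $u\in C^{3,\alpha}(\mathbb{C}^3)$ with bounded complex Hessian solving $F(z(x_0),\chi(x_0)^{-1}\partial\bar\partial u)=\sin^2(\theta)$ with the twisting \emph{frozen to the constant} $z(x_0)$, normalized so that $|\partial\bar\partial\partial u(0)|=1$; the Liouville theorem (Lemma~4.1 of \cite{pin3}) then forces $u$ to be a quadratic polynomial, a contradiction. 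The only point requiring care in the twisted setting is that the convexity input to that Liouville theorem survives, and it does precisely because in the blow-up limit $f$ is constant, so the linear terms $d,e$ in the quadratic of \eqref{calcresult} vanish and its minimum is $0$, recovering Lemma~3.2 of \cite{pin3}.

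The gap in your argument is the concavity step, and deferring it to ``the structural point established in \cite{pin3}'' does not close it: \cite{pin3} proves no concavity of $F$ in the Hessian, and indeed avoids Evans--Krylov for exactly this reason. When $z_t<0$ (which occurs here whenever $\theta>\pi/2$ and $f$ is small, and is the case the paper must handle, cf.\ Lemma \ref{discpos}), the term $2z_t/\det(A)$ is concave while $\mathrm{tr}(A)/\det(A)$ is convex, and $F(z_t,\cdot)$ is neither convex nor concave on the constraint set. What the cone conditions buy (via Lemma \ref{discpos}) is positivity of a quadratic form in the \emph{tangential} directions $(v_1,v_2)$ after eliminating $v_3$ through the linearized equation \eqref{linconst} --- a convexity property of the level set along the constraint, not of the operator. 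Upgrading that to a $C^{2,\alpha}$ estimate is precisely the nontrivial analytic work that the blow-up/Liouville argument is designed to replace; as written, your proof asserts the conclusion of that work without supplying it. (Your claim that convexity of $F(h,\cdot)$ is automatic for $h\ge 0$ is correct, so your argument would go through if one knew $z_t\ge 0$ everywhere, but that is not guaranteed by the hypotheses.)
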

	\begin{proof}
		If not, then we use a blow-up procedure as in \cite{cjy} to produce a point $ x_0 \in M $ and function $ u \in C^{3, \alpha}(\mathbb{C}^3) $ such that $ F(z(x_0), A) = \sin^{2}(\theta) $ and $ | \partial \bar{\partial} \partial u (0)| = 1 $, where $ A = \chi(x_0)^{-1} \partial \bar{\partial} u $, with $ \chi(x_0) $ treated as a Hermitian matrix defined using some coordinates at $ x_0 $. However, by Lemma 4.1 of \cite{pin3}, $ u $ must be a quadratic polynomial, contradicting $ |\partial \bar{\partial}\partial u(0)| = 1 $ (the convexity property used in the proof of the aforementioned Lemma still holds because when $ f $ is constant, the number $ \min $ is $ 0 $ and we get Lemma 3.2 of \cite{pin3}).
	\end{proof}
	Elliptic regularity and bootstrapping arguments along with the Arzela-Ascoli theorem now show that set of $ t \in [0, 1] $ where Equation \eqref{maincont} has a smooth solution is closed. The proof of Lemma 2.2 in \cite{pin3} shows that the conditions \ref{subsol} continue to hold at these points and hence $ S $ is closed, completing the proof of Theorem \ref{mainthm}.
	\\
	
	We now show how, if $ \omega $ solves 
	\begin{equation}
		\Re(\omega +  \sqrt{-1} \chi)^{n} - \cot(\theta)\Im(\omega +  \sqrt{-1} \chi)^{n} = f \chi^{n}
	\end{equation}
	with $ \omega \in \Gamma_{\chi, \theta} $, we can ensure that $ \omega \in \Gamma_{\chi, \theta, \Theta} $ by requiring $ f > -\epsilon $ for $ \epsilon > 0$ small enough. \\
	
	If $ n = 1 $, then we can simply take $ \epsilon = \cot(\theta) - \cot(\Theta) $, so we assume $ n > 1 $. Writing $ \lambda_1 \leq \dots \leq \lambda_n $ for the eigenvalues of $ \omega $ with respect to $ \chi $, the equation
	\begin{equation}
	\label{sinedhym}
		\prod_{i = 1}^{n} \sqrt{1 + \lambda_i^2} \cdot \frac{\sin(\theta - \sum_{ i = 1}^{n} \theta_i)}{\sin(\theta)} = f
	\end{equation}
	holds at each point of $ M $. If, at a point $ p $, $ f(p) \geq 0 $, then $ \displaystyle \sin(\theta - \sum_i \theta_i) \geq 0$ as well. As $ \omega \in \Gamma_{\chi, \theta} $, we can reason as Lemma \ref{sinelem} that this implies $ \displaystyle \theta \geq \sum_{i = 1}^n \theta_i $ at $ p $ and hence $ \omega \in \Gamma_{\chi, \theta, \theta} \subset \Gamma_{\chi, \theta, \Theta} $ at $ p $. \\
	
	On the other hand, if $ -\epsilon < f(p) < 0 $ for $ \displaystyle \epsilon := \frac{\sin(\frac{\Theta - \theta}{2})}{\sin(\theta)}$ , then we have
	\begin{equation}
	\label{sineineq}
		\sin(\theta - \sum_{ i  = 1}^{n} \theta_i) > -\epsilon \sin(\theta) = \sin(\frac{\theta - \Theta}{2})
	\end{equation}
	As $ \omega \in \Gamma_{\chi, \theta} $, we get $ \displaystyle \sin(\theta - \sum_{ i \in I } \theta_i) > 0 $ for all subsets $ I $ of $ \{1, \dots, n \} $ with $ |I| < n $. So by Lemma \ref{sinelem}, we get $ \displaystyle \sum_{i \in I} \theta_i < \theta $ for all such $ I $. In particular $ \displaystyle \sum_{i = 1 }^{n - 1} \theta_i < \theta  $. \\
	
	If $ n > 2 $, then $ \displaystyle \theta_n < \frac{\pi}{2} $ (if not, then $ \theta_1 + \theta_n \geq \pi > \theta $ contradicting $ \omega \in \Gamma_{\chi, \theta} $), so we have
	$$ -\frac{\pi}{2} < -\theta_n < \theta - \sum_{i = 1 }^{n} \theta_i < 0 $$at $ p $ (the leftmost inequality is due to the fact that the left-hand side of \ref{sinedhym} is negative at $ p $). We also have $ \displaystyle \frac{ \theta - \Theta}{2} \in (-\frac{\pi}{2}, 0) $ so by the monotonicity of $ \sin $ on $ (-\frac{\pi}{2}, 0) $ and the inequality \ref{sineineq}, $ \displaystyle \theta - \sum_{i = 1}^n \theta_i > \frac{ \theta - \Theta}{2} $ and hence $ \displaystyle \sum_{i = 1}^n \theta_i < \frac{ \theta + \Theta}{2} < \Theta $ .
	\\
	
	If $ n = 2 $ and $ \displaystyle \theta \leq \frac{\pi}{2} $, the above argument works without change as we still have $ \displaystyle -\frac{\pi}{2} < -\theta_2 < \theta - \theta_1 - \theta_2 < 0 $. If $ \displaystyle \theta > \frac{\pi}{2}  $ then it is not hard to see that $ \epsilon < 1 $, so $ \sin(\theta - \theta_1 - \theta_2 ) > -\sin(\theta) $. But we also have $ -\theta < \theta - \theta_1 - \theta_2 < 0 $ as $ \theta_1, \theta_2 < \theta $. If $ \displaystyle \theta - \theta_1 - \theta_2 \in [-\theta, \frac{\pi}{2} - \theta] $, then $ \displaystyle \sin(\theta - \theta_1 - \theta_2 ) \leq -\sin(\theta) $, so we must have $ \displaystyle \theta - \theta_1 - \theta_2 \in (\frac{\pi}{2} - \theta, 0 ) $. In particular, $ \displaystyle \theta - \theta_1 - \theta_2 \in (-\frac{\pi}{2}, 0 ) $ so the argument in the previous paragraph applies in this case too. Thus, $ \omega \in \Gamma_{\chi, \theta, \Theta} $. This completes the proof of Theorem \ref{mainpdethm}. \\

	Having proved the PDE result, we now sketch a proof of the following concentration of mass result analogous to \cite[Theorem~3.1]{datpin20}.  
	
	\begin{thm}
		\label{concofmass}
		Let $ (M, \chi) $ be a compact, connected K\"ahler manifold. Let $ \eta  $ be another K\"ahler form and suppose $ \Omega_0 $ is a smooth, real $ (1, 1) $-form such that for each $ t > 0 $, $ [\Omega_0 + t \eta] \in C_{\chi, \theta} $ (i.e. there exists a form in the class $ \Omega_0 + t \eta $ satisfying the cone condition). Assume that $ \displaystyle \int_M P^n(\Omega_0, \chi) \geq 0 $. Let $ Y $ be a codimension-1 subvariety of $ M $. Then there exists a current $ \Theta \geq \beta [Y]  $ for some $ \beta > 0 $ which satisfies the cone condition in the sense of definition \ref{def:conecurrent} on $ M $.
	\end{thm}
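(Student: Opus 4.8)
The plan is to run a concentration-of-mass argument in the style of \cite{datpin20} (ultimately of Demailly--P\u{a}un): one forces the $P^n_\theta$-mass of a small K\"ahler perturbation of $[\Omega_0]$ onto $Y$ by solving a sequence of twisted dHYM equations whose right-hand sides collapse to a measure supported on $Y$, and then passes to a weak limit. We may assume $Y$ is irreducible (otherwise concentrate mass on all components simultaneously). First fix a small $\delta>0$ and work in $[\Omega_0+\delta\eta]$, writing $a_\delta:=\int_M P^n_\theta(\Omega_0+\delta\eta,\chi)$. Decomposing $[\Omega_0+\delta\eta]=[\Omega_0+\tfrac{\delta}{2}\eta]+\tfrac{\delta}{2}[\eta]$, taking a cone representative $\Omega'$ of the first class (so that $P^k_\theta(\Omega',\chi)>0$ for $1\le k\le n-1$), and expanding $P^n_\theta(\Omega'+\tfrac{\delta}{2}\eta,\chi)$ by \eqref{binomsum}, one obtains $a_\delta>a_{\delta/2}>\cdots\to\int_M P^n_\theta(\Omega_0,\chi)\ge0$, hence $a_\delta>0$. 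Fix a smooth $\Omega_\delta\in[\Omega_0+\delta\eta]$ with $(\Omega_\delta,\chi)$ satisfying $C_{\chi,\theta}$, i.e.\ $\omega_\delta:=\Omega_\delta+\cot(\theta)\chi\in\Gamma_{\chi,\theta}$.

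Next, for each small $\epsilon>0$ I would choose a smooth $g_\epsilon\ge0$ on $M$, supported in the $\epsilon$-neighborhood $Y_\epsilon$ of $Y$, with $\int_M g_\epsilon\,\chi^n=a_\delta$, and arranged (for instance as a mollified multiple of the indicator of $Y_\epsilon$) so that for every compact $K\subset Y_{\mathrm{reg}}$ and every small enough coordinate ball $B$ meeting $K$ one has $\int_B g_\epsilon\,\chi^n\ge c(K,B)>0$ uniformly in $\epsilon$. Since $g_\epsilon\ge0$ and $\int_M g_\epsilon\chi^n=a_\delta\ge0$, Theorem \ref{mainpdethm} (for $n\le3$) or \cite[Proposition~5.2]{gao} (for $n\ge4$), applied with background form $\omega_\delta$, furnishes a smooth $\omega_{\delta,\epsilon}\in[\omega_\delta]\cap\Gamma_{\chi,\theta}$ solving $G^n_\theta(\omega_{\delta,\epsilon},\chi)=g_\epsilon\chi^n$; then $\Omega_{\delta,\epsilon}:=\omega_{\delta,\epsilon}-\cot(\theta)\chi\in[\Omega_0+\delta\eta]$, the pair $(\Omega_{\delta,\epsilon},\chi)$ satisfies $C_{\chi,\theta}$, and $P^n_\theta(\Omega_{\delta,\epsilon},\chi)=g_\epsilon\chi^n$.

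Since $\Omega_{\delta,\epsilon}>0$ lies in a fixed cohomology class, $\int_M\Omega_{\delta,\epsilon}\wedge\chi^{n-1}$ is independent of $\epsilon$, so along a subsequence $\Omega_{\delta,\epsilon}$ converges weakly to a closed positive $(1,1)$-current $\Theta\in[\Omega_0+\delta\eta]$; as the cone condition of Definition \ref{def:conecurrent} is closed under weak limits, $(\Theta,\chi)$ satisfies it on $M$. By Lemma \ref{lem:bound}(a), $C(\Omega_{\delta,\epsilon}+\chi)^n\ge G^n_\theta(\omega_{\delta,\epsilon},\chi)=g_\epsilon\chi^n$, so the positive measures $(\Omega_{\delta,\epsilon}+\chi)^n$ have total mass $\int_M(\Omega_{\delta,\epsilon}+\chi)^n$ independent of $\epsilon$ yet carry mass $\ge c(K,B)/C$ inside $B\cap Y_\epsilon$. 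The remaining --- and hardest --- step is to convert this collapsing concentration of Monge--Amp\`{e}re mass into a uniform lower bound $\nu(\Theta,p)\ge\beta$ on the Lelong number of $\Theta$ at a generic point $p\in Y$: working in coordinates on a ball $B$ with $Y\cap B=\{z_n=0\}$, one compares $\Omega_{\delta,\epsilon}+\chi$ with a model $\gamma[\{z_n=0\}]+\chi$ (whose $n$-th power absorbs mass proportional to $\gamma$ near $\{z_n=0\}$), solving an auxiliary Monge--Amp\`{e}re problem on $B$ with right-hand side $(\Omega_{\delta,\epsilon}+\chi)^n$ and invoking the comparison principle to force a logarithmic pole of size $\ge\beta$ along $\{z_n=0\}$, with $\beta>0$ depending only on $n,\theta,\delta$ and the classes $[\Omega_0],[\eta],[\chi]$. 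Siu's decomposition theorem then gives $\Theta\ge\beta[Y]$, which proves the theorem in $[\Omega_0+\delta\eta]$; and if $\int_M P^n_\theta(\Omega_0,\chi)>0$, letting $\delta\to0$ and extracting one more weak limit yields the current in $[\Omega_0]$, the constants $\beta$ then being bounded below.

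The main obstacle is precisely this last step. One needs $\beta$ uniform over a Zariski-dense subset of $Y$, not just at a single point --- which is exactly why the densities $g_\epsilon$ must be spread evenly along $Y$ and why the comparison estimate must be quantitative and location-independent --- and one must produce the pole robustly, through the comparison principle rather than by passing to the limit in the equation, because the Monge--Amp\`{e}re operator is not continuous under weak limits of currents. The singular locus of $Y$, of complex codimension $\ge2$ in $Y$, does not affect the generic Lelong number, but has to be excised in the construction of the $g_\epsilon$ and in the local coordinate argument.
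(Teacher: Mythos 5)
Your overall skeleton does match the paper's: solve a family of twisted dHYM equations whose right-hand sides concentrate near $Y$ (possible by Theorem \ref{mainpdethm} and \cite[Proposition~5.2]{gao}), dominate those right-hand sides by $(\Omega+\chi)^n$ via Lemma \ref{lem:bound}, take a weak limit, and use closedness of the cone condition of Definition \ref{def:conecurrent}. But the step you yourself flag as the hardest --- converting the collapsing concentration of Monge--Amp\`ere mass into a uniform generic Lelong number along $Y$ --- is not actually proved, and your replacement of the Demailly--P\u{a}un right-hand side by ad hoc bumps $g_\epsilon\ge 0$ discards exactly the structure that makes this step work. The paper sets $\chi_t=\chi+\delta\sqrt{-1}\partial\bar\partial\log(\psi_Y+t^2)$ and solves $P^n_\theta(\Omega_t,\chi)=\chi_t^n-\chi^n+A_t\chi^n$, so the concentrating measures are (up to controlled corrections) the Monge--Amp\`ere measures of an explicit family of quasi-psh potentials degenerating to one with logarithmic poles along $Y$; the Lelong bound then follows by running the argument of \cite[Proposition~2.6]{dempaun} with the comparison made against those explicit potentials, uniformly in $t$ and in the location along $Y$. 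With a generic bump $g_\epsilon$ there is no such model potential family attached to the right-hand side; your sketch (``auxiliary Monge--Amp\`ere problem plus comparison principle'') is a hope rather than an argument, and making it precise would amount to re-proving the Demailly--P\u{a}un concentration lemma in a setting where it is not available off the shelf.

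There is also a quantitative gap caused by insisting $g_\epsilon\ge 0$: the cohomological constraint forces $\int_M g_\epsilon\chi^n=a_\delta=\int_M P^n_\theta(\Omega_0+\delta\eta,\chi)$, so the total mass you can concentrate near $Y$, and hence your $\beta=\beta(\delta)$, is capped by $a_\delta$, which tends to $\int_M P^n_\theta(\Omega_0,\chi)$ as $\delta\to 0$. The hypothesis allows this integral to vanish (and this borderline case is exactly the one relevant to the application in Theorem \ref{maindhymthm}), in which case your scheme yields no uniform $\beta$ and no current in $[\Omega_0]$ --- your own hedge ``if $\int_M P^n_\theta(\Omega_0,\chi)>0$'' concedes this, and even under that extra assumption the claimed lower bound on $\beta$ as $\delta\to0$ is asserted, not derived; for fixed $\delta$ your current lies in $[\Omega_0+\delta\eta]$, whereas the paper produces (and later uses) $\Theta\in[\Omega_0]$. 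The paper's construction avoids this precisely because its twist $f_t=\chi_t^n/\chi^n-1+A_t$ is allowed to be slightly negative: the negative part elsewhere compensates a positive part near $Y$ that is bounded below independently of $t$ by the Demailly--P\u{a}un mass concentration, even when $A_t\int_M\chi^n\to 0$. This is the very reason the paper needs the solvability results with $f>-\epsilon$ rather than $f\ge 0$; a scheme restricted to non-negative right-hand sides cannot reach the borderline case of the theorem.
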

	\begin{proof}
		As in \cite[Lemma~2.1]{dempaun}, cover $ M $ by coordinate balls $ B_j $ such that on $ B_j $, $ Y $ is given by the zero set $ f_j $ of a holomorphic function on $ B_j $ and let $ \theta_j $ be a partition of unity subordinate to $ B_j $ such that $ \sum_j \theta_j^{2} = 1 $ on $ M $. Define the function $ \psi_Y = \sum_j \theta_j^{2} |f_j|^2 $ and for each $ t > 0 $, let $ \psi_t = \log(\psi_Y + t^2) $ and $ \chi_t = \chi + \delta  \sqrt{-1}  \partial \bar{\partial} \psi_t $. Also as in \cite[Lemma~2.1]{dempaun}, given $ \varepsilon > 0 $, by choosing $ \delta $ small enough, we can ensure that $ \frac{\chi_t^n}{\chi^{n}} > 1 - \varepsilon $ for all $ t $ small enough. \\
		
		 Now consider the family of differential equations for $ \Omega_t \in [\Omega_0 + t \eta] $: 
		$$ \label{eqfamily} P^{n}_\theta(\Omega_t, \chi) = \chi_t^{n} - \chi^n + A_t \chi^n $$where $ A_t $ is a real number fixed by integrating both sides over $ M $. As the integral of the left-hand side is non-negative and as $ \chi_t \in [\chi_0] $, $ A_t $ is non-negative as well. It can also be seen that $ A_t $ is increasing in $ t $. Writing the right-hand side as $ f_t \chi^n $ where $ f_t = \frac{\chi_t^n}{\chi^{n}} - 1 + A_t $, we see that $ f_t > -\varepsilon + A_t > -\varepsilon $ and that $ \int_M f_t \chi^n  = A_t \int_M \chi^{n} \geq 0 $. By Proposition \ref{prop:gctwisted} and Theorem \ref{mainpdethm}, this equation can be solved for each $ t > 0 $ with the solutions $ \Omega_t $ satisfying the cone condition by choosing $ \varepsilon_{n, \theta} $ small enough. \\
		
	 As $ \int_M \Omega_t \wedge \chi^{n-1} $ is bounded independently of $ t $ (for $ t < 1  $, say), the family of solutions $ \Omega_t $ is weakly bounded and hence a subsequence $ {\Omega_{t_i}}_i $ converges weakly to a positive current $ \Theta \in [\Omega_0] $. It can be seen that $ \Theta $ satisfies the cone condition in the sense of definition \ref{def:conecurrent} on $ M $. By Lemma \ref{lem:bound}, $ C(\Omega_t + \chi)^n \geq P^n_\theta(\Omega_t, \chi) = \chi_t^n - \chi^{n} + A_t \chi^n $ for some $ C $ independent of $ t $ and hence, $ C' (\Omega_t + 2\chi)^n \geq \chi_t^n $ for some other $ C' $ independent of $ t $. Now we can proceed as in \cite[Proposition~2.6]{dempaun} to show that $ \Theta \geq \beta [Y] $ for some $ \beta > 0 $, concluding the proof of the theorem.
	\end{proof}
	
\section{Proof of Theorem \ref{maindhymthm}}
	
	We will first sketch a proof of the following regularization and gluing lemma:
\begin{lem}
	\label{regandglue}
	Let $ (M, \alpha) $ be a compact K\"ahler manifold. Suppose $ \Omega_0, \chi $ are smooth real $ (1, 1) $-forms on $ M $ such that there exists a K\"ahler  current $ T \in [\Omega_0] $ such that $ T \geq \beta [Y] $ for some $ \beta > 0 $ and a codimension-1 subvariety $ Y $. Suppose also that there exists an $ \epsilon > 0 $ such that $ T - 3 \epsilon \alpha $ satisfies the cone condition $ \overline{C^{m}_{\chi, \theta}} $ in the sense of definition \ref{def:conecurrent} on $ Y^c $. There exits a $ c > 0$ depending only on $ \alpha, \epsilon, \beta $ such that if there is a neighborhood $ U $ of  $ S = E_c(T) \cup Y$ on which there exists a smooth $ \Omega_U \in [\Omega_0] $ satisfying the cone condition, then there exists a smooth $ \Omega \in [\Omega_0] $ satisfying the cone condition on $ M $.
\end{lem}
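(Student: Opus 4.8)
\emph{Setup and strategy.} Write $E_c(T)=\{x\in M:\nu(T,x)\ge c\}$ for the $c$-upper level set of the Lelong numbers of $T$, an analytic subset of $M$, and let $u_\nu\downarrow u_\infty^{(c)}$ be the Demailly regularizing sequence of $T$ truncated at level $c$ (described below), $u_\infty^{(c)}$ being $-\infty$ exactly along $E_c(T)$. Since $T\ge\beta[Y]$ we have $\nu(T,x)\ge\beta$ for all $x\in Y$, so $Y\subseteq E_c(T)$ once $c<\beta$, in which case $S=E_c(T)$ and $M\setminus S\subseteq Y^c$. The plan, following \cite{datpin20}, is: (i) replace $T$ on $M\setminus S$ by a smooth form $\Omega_\nu\in[\Omega_0]$ still satisfying the cone condition there; (ii) keep the given smooth $\Omega_U$ on a neighborhood of $S$; (iii) splice $\Omega_\nu$ and $\Omega_U$ over a collar around $S$ using Demailly's regularized maximum and the convexity of the cones $C^m_{\chi,\theta}$.

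\emph{Regularization of $T$ and the choice of $c$.} By Demailly's regularization theorem (see \cite{dembook}; used in this form in \cite{dempaun,datpin20}), for every $c>0$ there are quasi-psh functions $u_\nu$, decreasing in $\nu$ to $u_\infty^{(c)}$, each $u_\nu$ smooth on $M\setminus E_c(T)$, such that $\Omega_\nu:=\Omega_0+\sqrt{-1}\,\partial\bar{\partial}u_\nu\in[\Omega_0]$ is smooth on $M\setminus E_c(T)$ and, there, $\Omega_\nu\ge T-\delta_\nu\alpha$ with $\limsup_\nu\delta_\nu\le C_\alpha c$ for a constant $C_\alpha$ depending only on $\alpha$. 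Choosing $c:=\min\{\beta/2,\ \epsilon/(2C_\alpha)\}$ — which depends only on $\alpha,\epsilon,\beta$ — we have $\delta_\nu<\epsilon$ for $\nu$ large, and on $M\setminus S$
\[
\Omega_\nu-2\epsilon\alpha=(T-3\epsilon\alpha)+(\Omega_\nu-T+\delta_\nu\alpha)+(\epsilon-\delta_\nu)\alpha ,
\]
where the first summand lies in $\overline{C^m_{\chi,\theta}}$ (the hypothesis, restricted to $Y^c\supseteq M\setminus S$), the second is a positive current, and the third a strictly positive form. Since $C^m_{\chi(x),\theta}$ is convex and, by \ref{binomsum}, invariant under adding positive forms, and since the closure of $C^m_{\chi(x),\theta}$ plus a strictly positive form lies in $C^m_{\chi(x),\theta}$, the (smooth) form $\Omega_\nu-2\epsilon\alpha$, hence $\Omega_\nu$, satisfies the cone condition $C^m_{\chi,\theta}$ on $M\setminus S$ for all $\nu$ large.

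\emph{Gluing.} Fix the given neighborhood $U\supseteq S$ carrying a smooth $\Omega_U=\Omega_0+\sqrt{-1}\,\partial\bar{\partial}u_U\in[\Omega_0]$ satisfying $C^m_{\chi,\theta}$, and pick nested open sets $S\subseteq U'\subset U''\subset U$ with $\overline{U'}\subset U''$, $\overline{U''}\subset U$, and $U'$ small enough that $\sup_{\overline{U'}}u_\infty^{(c)}$ is very negative — possible since $u_\infty^{(c)}=-\infty$ on $S$ and is upper semicontinuous. Put $C_1:=1+\sup_{\overline{U}\setminus U''}\big|u_U-u_\infty^{(c)}\big|<\infty$, which is finite as $\overline{U}\setminus U''$ avoids $S$. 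Since $u_\nu\downarrow u_\infty^{(c)}$, Dini's lemma gives $u_\nu\to u_\infty^{(c)}$ uniformly on the compact set $\overline{U}\setminus U'$ and $\sup_{\overline{U'}}u_\nu\to\sup_{\overline{U'}}u_\infty^{(c)}$; choosing $U'$ (then $\nu$ large) appropriately we arrange $u_\nu<u_U-2C_1-1$ on $\overline{U'}$ and $|u_U-u_\nu|\le C_1$ on $\overline{U}\setminus U''$. Let $v$ be Demailly's regularized maximum of $u_U-2C_1$ and $u_\nu$ on $U$, with parameter $\eta<\tfrac12\min\{1,C_1\}$. Then $v=u_U-2C_1$ on $U'$ and $v=u_\nu$ on $\overline{U}\setminus U''$, so
\[
\Omega:=\begin{cases}\Omega_U&\text{on }U',\\ \Omega_0+\sqrt{-1}\,\partial\bar{\partial}v&\text{on }U,\\ \Omega_\nu&\text{on }M\setminus\overline{U''}\end{cases}
\]
is a well-defined smooth $(1,1)$-form on $M$ with $\Omega\in[\Omega_0]$ (the local potentials patch to a global function). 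It satisfies $C^m_{\chi,\theta}$: on $U'$ it equals $\Omega_U$; on $M\setminus\overline{U''}\subseteq M\setminus S$ it equals $\Omega_\nu$; and on the collar $U''\setminus U'\subseteq M\setminus S$ both $\Omega_U$ and $\Omega_\nu$ satisfy $C^m_{\chi,\theta}$ pointwise while the complex Hessian of $v$ lies pointwise in the convex hull of those of $u_U-2C_1$ and $u_\nu$ modulo a positive semidefinite form, so $\Omega_0+\sqrt{-1}\,\partial\bar{\partial}v$ lies in the convex cone $C^m_{\chi,\theta}$ there as well. Hence $\Omega\in[\Omega_0]$ is smooth and satisfies the cone condition on all of $M$.

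\emph{Main obstacle.} The delicate step is the regularization: one must ensure the approximants retain the \emph{full} polynomial positivity $P^k_\theta(\Omega_\nu,\chi)>0$ for $k\le m$ on $M\setminus S$ — not merely the lower bound $\Omega_\nu\ge-\epsilon\alpha$ — and that the threshold $c$ depends only on $\alpha,\epsilon,\beta$. This is precisely what the $3\epsilon$-room buys: one portion of $\epsilon$ absorbs the $O(c)$ positivity loss of Demailly's approximation (pinning $c\sim\epsilon/C_\alpha$), a second keeps the limiting current strictly inside the cone so that smoothing cannot escape it, and the hypothesis $T\ge\beta[Y]$ enters only to force $Y\subseteq E_c(T)$, i.e.\ to push the $\beta[Y]$-singularity of $T$ — which no regularization removes — into the set $S$ handled by $\Omega_U$.
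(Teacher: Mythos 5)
Your gluing step (regularized maximum of $u_U-2C_1$ and the regularization of $T$, Dini-type uniformity on the collar, convexity of $C^m_{\chi,\theta}$) is essentially the same device the paper uses, and that part is fine. The genuine gap is in the smoothing step, which is exactly the delicate point you flag at the end but then resolve by citation rather than by proof. You invoke Demailly's regularization theorem in the form ``$\Omega_\nu \ge T-\delta_\nu\alpha$ on $M\setminus E_c(T)$ with $\limsup_\nu\delta_\nu\le C_\alpha c$''. Demailly's theorem does not give a lower bound of the approximants by the current $T$ itself; it only preserves lower bounds by a fixed continuous $(1,1)$-form $\gamma$ with $T\ge\gamma$, up to a loss governed by the Lelong numbers. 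A bound of the form $T_\nu\ge T-\delta_\nu\alpha$ would mean $u_\nu-u$ is quasi-psh, which already fails near any $-\infty$ pole of $u$ of Lelong number zero (such points may well lie in $M\setminus E_c(T)$), and more generally the averaged Hessian of a mollified potential does not dominate the original Hessian up to $O(c)$. Since your whole transfer of the cone condition from $T-3\epsilon\alpha$ to the smooth approximants rests on the decomposition $\Omega_\nu-2\epsilon\alpha=(T-3\epsilon\alpha)+(\Omega_\nu-T+\delta_\nu\alpha)+(\epsilon-\delta_\nu)\alpha$ with the middle term a positive current, the argument collapses at this point: what Demailly regularization preserves is linear positivity, not the nonlinear conditions $P^k_\theta(\cdot,\chi)>0$ for $1\le k\le m$.

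This is precisely why the paper does not use the global Demailly approximants at all. It works with the local convolutions $\phi_{,\delta}$ built into Definition \ref{def:conecurrent}: convolution is an average of translates, so by convexity of the cone it preserves the cone condition against constant-coefficient metrics $\chi_0\le\chi$; the discrepancy between $\chi_0$ and the variable $\chi$, and between $\alpha$ and the flat potential $|z|^2$, is then absorbed into the $\epsilon\alpha$ room via Lemma \ref{lem:perturb1} and the normalization $|\phi^i_\alpha-|z|^2|<\epsilon r^2/1000$. The price of working with per-chart mollifications is that the potentials differ from ball to ball, and the gluing of these with $\phi_U+3c\log\delta$ is controlled by the Lelong-number level estimates $0\le\hat\phi^i_\delta-\phi^i_{,\delta}\le a_n\,\nu^i(x,\delta)$; this is where the explicit threshold $c=\min\bigl(\epsilon r^2/(1000a_n),\,\beta/1000\bigr)$, depending only on $\alpha,\epsilon,\beta$, actually comes from. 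If you want to keep your single-global-potential route, you would have to prove a cone-preserving regularization statement for $T$ (not quote Demailly), which amounts to redoing the paper's local-mollification argument; as written, the key inequality you rely on is unavailable and in general false.
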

\begin{proof}
	As $ T \in [\Omega_0] $, there exists a quasi-psh function $ \phi_T$ such that $ T = \Omega_0 +  \sqrt{-1} \partial \bar{\partial} \phi_T $. Similarly, there exists a smooth function $ \phi_U $ on $ U $ such that $ \Omega_U = \Omega_0 +  \sqrt{-1}  \partial \bar{\partial} \phi_U $ on $ U $. Cover $ M $ by a finite number of coordinate balls $ \{B^i_{6r}(x_i)\}_i $ centered at points $ x_i \in M$ such that $ \{ B^i_r(x_i) \}_i $ also cover $ M $ and such that on each $ B^i_{6r}(x_i) $, the coordinate distance is within $ 1 + \frac{1}{100} $ of the distance defined using the K\"ahler metric $ \alpha $ and such that on $ B^i_{6r}(x_i) $ 
	
	\begin{equation*}	
	\begin{split}
	& \alpha =  \sqrt{-1} \partial \bar{\partial} \phi_{\alpha}^i  \\
	& |\phi_{\alpha}^{i} - |z|^{2}| < \frac{\epsilon}{1000} r^{2}.
	\end{split}	
	\end{equation*}Such an $ r $ exists as $ \alpha $ is K\"ahler and $ M $ is compact. We will also require that for each $ y \in Y $, the $ \alpha $-ball of radius $ 2r $, $ B_{2r}(y) $, is contained in some $ B^i_{6r}(x_i) $, but this is automatically satisfied as each $ y \in Y $ is in $ B^i_r(x_i) $ for some $ i $ and hence, $B_{2r}(y) \subset B_{6r}^i(x_i) $ (as the Euclidean and $ \alpha $-distances on $ B^i_{6r}(x_i) $ are nearly equal) . \\

	As the $ B^i $s are convex, we have $ \displaystyle \Omega_0 =  \sqrt{-1}  \partial \bar{\partial} \phi^i_0 $ and $ \displaystyle \chi =  \sqrt{-1}  \partial \bar{\partial} \phi_{\chi}^i  $ on $ B^i_{6r}(x_i) $ for some smooth functions $ \phi^i_0,  \phi_{\chi}^i $. \\

	There exists some $ C > 0 $ such that $ 10 C \chi \leq \alpha $ on $ M $. Define $ \epsilon_1 = \epsilon C $. Let $ \delta_0 < r/4 $ be so small that each $ B^i_{11r/2}(x_i) $ can be covered by coordinate balls of radius $ \delta_0  $ and on each ball $ B^i_{2 \delta_0}(x) $ with $ x \in B^i_{11r/2}(x_i) $, there exists a constant coefficient form $ \chi_0 $ which satisfies
	\begin{equation*}
		0 < \chi_0 \leq \chi \leq \chi_0(1 + \epsilon_2)
	\end{equation*}where $ \epsilon_2 $ is chosen small enough so that $ \displaystyle  \frac{\epsilon_2 |\cot(\theta)|}{\epsilon_2 + 1} < \epsilon_1 $ and if $ \omega \in \Gamma_{\chi_0, \theta} $, then $ \omega + \epsilon_1 \chi \in \Gamma_{\chi, \theta} $ (by Lemma \ref{lem:perturb1}, $ \epsilon_2 $ can be chosen to depend only on $ \epsilon_1 $, $ \theta $, $ \text{dim}(M) $). \\
	
	Let $ 0 < \delta < \delta_0   $. If $ x \in B_{11r/2}^i(x_i) $ for some $ i $ with and $ B^i_\delta(x) \cap Y = \phi $, then by the definition of the cone condition of currents, 
	\begin{equation*}
	 \sqrt{-1} \partial \bar{\partial}(\phi^i_0 - 3 \epsilon \phi_{\alpha}^{i} + \phi_T)_{,\delta} \in C_{\chi_0(x), \theta}
	\end{equation*}where $ \phi_{,\delta} $ denotes the mollification at level $ \delta $. \\

	So we have $ \displaystyle \sqrt{-1} \partial \bar{\partial}(\phi^i_0 - 3 \epsilon \phi_{\alpha}^{i} + \phi_T)_{,\delta} + \chi_0(x)\cot(\theta) \in \Gamma_{\chi_0(x), \theta}$. By the choice of $ \epsilon_2 $ and Lemma \ref{lem:perturb1}, we get $ \displaystyle \sqrt{-1} \partial \bar{\partial}(\phi^i_0 - 3 \epsilon \phi_{\alpha}^{i} + \phi_T)_{,\delta} + \chi_0(x)\cot(\theta) + \epsilon_1 \chi(x) \in \Gamma_{\chi, \Theta} $. \\
	
	Hence, 
	\begin{equation}
		\sqrt{-1} \partial \bar{\partial}(\phi^i_0 - 3 \epsilon \phi_{\alpha}^{i} + \phi_T)_{,\delta} + \chi_0(x)\cot(\theta) + \epsilon_2 \chi(x) - \chi(x)\cot(\theta) \in C_{\chi(x), \theta}
	\end{equation}
	
	Now we have 
	\begin{equation*}
	\begin{split}
	\chi(x) - 2  \sqrt{-1}  \epsilon \partial \bar{\partial} \phi_{{\chi, \delta}}^{i} + \cot(\theta)(\chi_0(x) - \chi(x)) < & \chi(x)(\epsilon_1 + |\cot(\theta)|\frac{\epsilon_2}{1 + \epsilon_2}) - 2 \epsilon \alpha_{,\delta}(x) \\
	 < & 2(\chi(x) \epsilon_1 - \chi_{, \delta}(x) 10 C \epsilon) \\
	 = & 2 C \epsilon ( \chi(x)   -  10 \chi_{, \delta}(x) )
	\end{split}
	\end{equation*}
	By choosing $ \delta $ small enough, we see that this last expression can be made negative. Hence, we have
	\begin{equation*}
		\sqrt{-1} \partial \bar{\partial}(\phi^i_0 - \epsilon \phi_{\alpha}^{i} + \phi_T)_{, \delta}(x) \in C_{\chi(x), \theta}.
	\end{equation*}
	
	Define the psh function $ \phi_i := \phi^i_0 - \epsilon \phi_{\alpha}^{i} + \phi_T $ on $ B^i_{6r}(x_i) $. For $ x \in B^i_{11r/2}(x_i) $ and $ \delta < r/4 $, if we define the Lelong number at level $ \delta $ of $ \phi_i $ as
	\begin{equation*}
		\nu^i(x, \delta) = \frac{\hat{\phi}^i_{r/4}(x) - \hat{\phi}^i_{\delta}(x)}{\log(r/4) - \log(\delta)},
	\end{equation*} then $ \displaystyle \nu(T - \epsilon \chi, x) = \nu^i(x) = \lim_{\delta \rightarrow 0} \nu^i(x, \delta) $, $ \nu^i(x, \delta) $ is non-decreasing in $ x $ and as in \cite{gao}, it is seen to satisfy the following inequalities 
	\begin{equation*}
		\begin{split}
		0 \leq \hat{\phi}^i_{\delta}(x) - \hat{\phi}^i_{\delta/a}(x) \leq \nu^i(x, \delta)  \log(a) & \hspace{2em} \forall a \geq 1,  \\
		0 \leq \hat{\phi}^i_{\delta}(x) - {\phi}^i_{,\delta}(x) \leq \nu^i(x, \delta) a_n, &  
		\end{split}
	\end{equation*}where $ a_n $ is a constant depending only on the dimension $ n $ of $ M $ and the mollifier $ \rho $. \\
	
	We now define $ \displaystyle c = \min\bigg(\frac{ \epsilon r^2}{1000 a_n}, \frac{\beta}{1000}\bigg) $. Note that $ c $ only depends on $ r, \beta, \epsilon $ and that $ r $ in turn depends only on $ \alpha $. \\

	By taking $ \delta $ sufficiently small, it can be shown as in \cite[Section~4]{gao} that the regularized maximum of $ (\phi^i_0 - \epsilon \phi_{\alpha}^{i} + \phi_T)_{, \delta}(x) - \phi^i_0(x) $ on $ B^i_{3r}(x_i) $ and $ \phi_U + 3c \log(\delta) $ on an open neighborhood $ O \subset U $ of $ E_c(T) \cup Y $ is a smooth function $ \phi $ on $ M $ such that $ \Omega = \Omega_0 +  \sqrt{-1} \partial \bar{\partial} \phi $ satisfies the cone condition $ C_{\chi, \theta} $ away from $ Y  $. Further, by \cite[Proposition~4.1]{datpin20}, it can be ensured (by taking $ \delta $ smaller if necessary) that $ \Omega $ satisfies the cone condition $ C_{\chi, \theta} $ on all of $ M $.
	
\end{proof}
	
	We now prove Theorem \ref{maindhymthm} by induction on $ n = \dim(M) $. For $ n = 1 $, the theorem is trivial. Assume that $ n > 1 $ and the theorem has been proved in all dimensions $ < n $.	Let $ \eta $ be the curvature form of an ample line bundle $ L $ on $ M $ and let $ V $ be an $ m $-dimensional subvariety. Then for any $ (m-k, m -k) $-form $ F $, $ \displaystyle \int_V F \wedge \eta^{k}  = \int_{V \cap H_1 \cap \dots \cap H_k} F$ where $ H_i $ are some members of the linear system $ |L| $. So if we define $ \Omega_t := \Omega_0 + t \eta $, then by the hypothesis of Theorem \ref{maindhymthm},
	\begin{equation}
		\int_V P_\theta^m(\Omega_t, \chi) = \sum_r \int_V \binom{m}{r} P_\theta^{m-r}(\Omega, \chi) \wedge t^r \eta^r \geq \int_V t^m \eta^m  
	\end{equation}Hence, for each $ t > 0 $, the pair $ (\Omega_t, \chi) $ satisfies the uniform numerical condition in \cite[Proposition~5.2]{gao}. By the same proposition, there exist metrics $ {}_1\Omega_t \in [\Omega_t] $ satisfying the cone condition $ C_{\chi, \theta} $ on $ M $. Let $ s $ be a holomorphic section of $ L $ with non-trivial zero section $ Y $. Let $ \chi_Y $ be a K\"ahler metric cohomologous to the current $ [Y] $. By Theorem \ref{concofmass}, we obtain a positive current $ \Theta \in [\Omega_0] $ satisfying the cone condition $ C_{\chi, \theta} $ on $ M $ in the sense of definition \ref{def:conecurrent} and also satisfying $ \Theta \geq 2 \beta [Y] $ for some $ \beta > 0 $. Let $ T = \Theta -\beta[Y] + \beta \chi_Y $. Then $ T \in [\Omega_0] $ and $ T \geq \beta [Y] + \beta \chi_Y $ and $ T = \Theta + \beta \chi_Y $ outside $ Y $. Thus, by taking $ \epsilon $ sufficiently small, we see that $ T - \epsilon \chi $ is a K\"ahler current satisfying the cone condition outside $ Y $. We will now be done by Theorem \ref{regandglue} provided we prove the following result:
	
	\begin{prop}
		\label{mainnbdcone}
		Let $ Z $ be a proper subvariety of $ M $, then there exists an open neighborhood $ U $ of $ Z $ and a smooth function $ \phi_U $ on $ U $ such that $ \Omega_U := \Omega_0 +  \sqrt{-1} \partial \bar{\partial} \phi_U $ satisfies the cone condition $ C_{\chi, \theta} $ on $ U $
	\end{prop}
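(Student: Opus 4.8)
The plan is to argue by induction on $d=\dim Z$, following the scheme of \cite{datpin20}.

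For the base case $d=0$, $Z$ is a finite set of points; near each point $p$ I would pick a coordinate ball $B$ on which $\Omega_0=\sqrt{-1}\partial\bar{\partial} g$ for a smooth $g$, and set $\phi_U=-g+A|z|^{2}$ on a smaller ball, so that $\Omega_U=A\sqrt{-1}\partial\bar{\partial}|z|^{2}$; for $A$ large every eigenvalue of $\Omega_U+\cot(\theta)\chi$ with respect to $\chi$ is large, so all the relevant $\text{arccot}$'s are close to $0$ and $\Omega_U\in C_{\chi,\theta}$, and taking $U$ to be the disjoint union of these balls settles $d=0$. Now let $d\ge1$ and assume the proposition for all proper subvarieties of dimension $<d$. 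The singular locus $Z_{\mathrm{sing}}$ has dimension $<d$, so there is an open $U_{1}\supset Z_{\mathrm{sing}}$ carrying a smooth $\Omega_{U_{1}}=\Omega_0+\sqrt{-1}\partial\bar{\partial}\phi_{U_1}\in[\Omega_0]$ with $\Omega_{U_1}\in C_{\chi,\theta}$. It then remains to produce such a form on a neighbourhood of the compact set $K:=Z\setminus U_{1}\subset Z_{\mathrm{sm}}$; the two will be glued along their overlap by a regularised maximum exactly as in Lemma~\ref{regandglue}, with the constants arranged so that near $Z_{\mathrm{sing}}$ the maximum picks out $\Omega_{U_1}$.

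To treat a neighbourhood of $K$, I would fix a resolution $\pi\colon\hat Z\to M$ with $\hat Z$ smooth projective of dimension $d$, $\pi$ birational onto $Z$ and an isomorphism over $Z_{\mathrm{sm}}$, with exceptional locus $E$ satisfying $\pi(E)\subseteq Z_{\mathrm{sing}}$. Let $\hat\omega$ be a K\"ahler form on $\hat Z$, so $\chi_{\delta}:=\pi^{*}\chi+\delta\hat\omega$ is K\"ahler for $\delta>0$, and work on $\hat Z$ with the class $[\pi^{*}\Omega_0+s\hat\omega]$ ($s>0$, $\delta$ small). Using the expansion \ref{binomsum}, the projection formula (by which $\int_{V}\pi^{*}(\cdot)$ equals $\int_{\pi(V)}(\cdot)$ when $\pi|_V$ is generically finite and vanishes when $\pi$ contracts $V$), the strict positivity of $\int_{V'}P^{k}_{\theta}(\Omega_0,\chi)$ over proper subvarieties $V'\subset M$ of dimension $k$ (the numerical hypothesis of Theorem~\ref{maindhymthm}), and the fact that $\hat\omega$ restricts to a K\"ahler form on every $V$ that $\pi$ contracts, one checks --- choosing $s$ and $\delta$ suitably --- that $\big(\pi^{*}\Omega_0+s\hat\omega,\chi_{\delta}\big)$ satisfies the numerical hypotheses on $\hat Z$: $\int_{\hat Z}P^{d}_{\theta}(\pi^{*}\Omega_0+s\hat\omega,\chi_{\delta})>0$ (at $\delta=0$ this equals $\int_{Z}P^{d}_{\theta}(\Omega_0,\chi)>0$ plus a positive contribution from $s\hat\omega$) and $\int_{V}P^{k}_{\theta}(\pi^{*}\Omega_0+s\hat\omega,\chi_{\delta})>0$ for every $k$-dimensional $V\subset\hat Z$ with $k<d$. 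Since $\hat Z$ is projective of dimension $d<n$, the corollary following Theorem~\ref{maindhymthm} is available on $\hat Z$ by the inductive hypothesis (it follows from Theorem~\ref{maindhymthm} in dimension $d$, Theorem~\ref{mainpdethm}, and \cite[Proposition~5.2]{gao}); applying it with a strictly positive twist $f>0$ normalised by $\int_{\hat Z}f\chi_{\delta}^{d}=\int_{\hat Z}P^{d}_{\theta}(\pi^{*}\Omega_0+s\hat\omega,\chi_{\delta})$ gives a smooth $\Omega_{\hat Z}\in[\pi^{*}\Omega_0+s\hat\omega]$ with $P^{k}_{\theta}(\Omega_{\hat Z},\chi_{\delta})>0$ for $k\le d-1$ and $P^{d}_{\theta}(\Omega_{\hat Z},\chi_{\delta})=f\chi_{\delta}^{d}>0$, i.e. $\Omega_{\hat Z}\in C^{d}_{\chi_{\delta},\theta}$, the top cone condition on $\hat Z$.

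Restricting $\Omega_{\hat Z}$ over $\pi^{-1}(Z_{\mathrm{sm}})\cong Z_{\mathrm{sm}}$ and using $\pi^{*}\chi\le\chi_{\delta}$ together with the monotonicity of the cone condition in the background metric recorded in Section~2, I get a smooth $\Omega_{Z_{\mathrm{sm}}}\in C^{d}_{\chi|_{Z_{\mathrm{sm}}},\theta}$ in the class $[\Omega_0|_{Z_{\mathrm{sm}}}]+s[\hat\omega|_{Z_{\mathrm{sm}}}]$. Letting $s\to0$ along a sequence, these forms converge weakly to a positive current in $[\Omega_0|_{Z_{\mathrm{sm}}}]$ on $Z_{\mathrm{sm}}$ satisfying $\overline{C^{d}_{\chi|_{Z_{\mathrm{sm}}},\theta}}$; a degenerate concentration-of-mass argument along $Z$ (in the spirit of Theorem~\ref{concofmass}, now with the degenerate background $\pi^{*}\chi$) followed by the successive regularisation arguments of \cite[Section~4]{gao} and \cite[Proposition~4.1]{datpin20} then produces, on a neighbourhood $U_{2}\subset M$ of $K$, a smooth function $\psi$ with $(\Omega_0+\sqrt{-1}\partial\bar{\partial}\psi)|_{Z_{\mathrm{sm}}}\in C^{d}_{\chi|_{Z_{\mathrm{sm}}},\theta}$ on $Z_{\mathrm{sm}}\cap U_{2}$. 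Finally, take a smooth $\rho\ge0$ defined near $Z$ cutting out $Z$ --- e.g. $\rho=\sum_{j}\theta_{j}^{2}|f_{j}|^{2}$ as in the proof of Theorem~\ref{concofmass} --- so that $\sqrt{-1}\partial\bar{\partial}\rho$ vanishes along $TZ_{\mathrm{sm}}$ and is positive in the normal directions; for $A\gg0$ the form $\Omega_{U}:=\Omega_0+\sqrt{-1}\partial\bar{\partial}(\psi+A\rho)$ restricts on $Z_{\mathrm{sm}}\cap U_2$ to $(\Omega_0+\sqrt{-1}\partial\bar{\partial}\psi)|_{Z_{\mathrm{sm}}}$, and on a (shrinking) neighbourhood of $K$ in $M$ its eigenvalues with respect to $\chi$ split into $d$ eigenvalues near those of $\Omega_{Z_{\mathrm{sm}}}$ and $n-d$ eigenvalues of size $\sim A$ whose $\text{arccot}$'s are negligible; since $\Omega_{Z_{\mathrm{sm}}}$ lies in the top cone $C^{d}_{\chi,\theta}$ this forces $\Omega_{U}\in C^{n-1}_{\chi,\theta}=C_{\chi,\theta}$ there. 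Gluing $\Omega_{U}$ with $\Omega_{U_{1}}$ as above completes the induction.

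I expect the main obstacle to be the numerical conditions on the resolution $\hat Z$, and the ensuing passage $s\to0$. Subvarieties contracted by $\pi$ are invisible to pullbacks from $M$ (for instance $\int_{V}P^{1}_{\theta}(\pi^{*}\Omega_0,\chi_{\delta})=\int_{V}\pi^{*}\Omega_0=0$ on a $\pi$-contracted curve $V$), so the needed positivity there must come entirely from the $s\hat\omega$ term; one has to choose $s$ and $\delta$ --- and carefully order the terms in the $(s,\delta)$-expansion of $\int_{V}P^{k}_{\theta}(\pi^{*}\Omega_0+s\hat\omega,\chi_{\delta})$ --- so that the positive $\hat\omega$-contribution dominates the a priori sign-indefinite mixed terms. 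Carrying $s\hat\omega$ along forces the solution over $Z_{\mathrm{sm}}$ into a class differing from $[\Omega_0|_{Z_{\mathrm{sm}}}]$ by $s[\hat\omega|_{Z_{\mathrm{sm}}}]$, and recovering a \emph{smooth} form in the exact class with the strict top-level cone condition after $s\to0$ --- which is precisely where the degenerate concentration of mass and the successive regularisations are needed --- is the technical heart of the argument.
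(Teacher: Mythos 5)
Your outline matches the paper's at the top level (induction on $\dim Z$, handle $Z_{\mathrm{sing}}$ by the inductive hypothesis, resolve the singularities, produce a metric on the smooth model, descend and glue), but the step you yourself flag as the main obstacle is a genuine gap, and the paper resolves it by a different mechanism than the one you propose. You want to verify the numerical hypotheses of Theorem \ref{maindhymthm} on the resolution $\hat Z$ for the pair $(\pi^*\Omega_0+s\hat\omega,\chi_\delta)$ and then invoke the theorem there as a black box. For a $\pi$-contracted subvariety $V\subset\hat Z$ with $\dim\pi(V)=j<k=\dim V$, the expansion of $\int_V P^k_\theta(\pi^*\Omega_0+s\hat\omega,\chi_\delta)$ contains mixed terms such as $s^{k-j}\int_V(\pi^*\Omega_0)^{j}\wedge\hat\omega^{k-j}$ and analogous $\delta$-terms of indefinite sign (since $\Omega_0$ is not positive), and these dominate the manifestly positive term $s^k\int_V\hat\omega^k$ as $s,\delta\to0$; moreover the threshold for $s$ depends on $V$, and there are infinitely many $V$ to control. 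The paper never verifies numerical conditions on the resolution. Instead it obtains forms ${}_1\Omega_t\in[\Omega_0+t\eta]$ satisfying the \emph{pointwise} cone condition on all of $M$ from \cite[Proposition~5.2]{gao}, pulls them back, and uses Proposition \ref{degentonon} (the $N>n$ scaling $\chi_s=\pi^*\chi+s^N\alpha$ versus $\Omega+As\alpha$) to restore a quantitative pointwise cone condition on $\tilde Z$; these perturbed pullbacks are then fed into the twisted PDE (Proposition \ref{prop:gctwisted} / Theorem \ref{mainpdethm}) and a concentration-of-mass argument rerun on $\tilde Z$, followed by Lemma \ref{regandglue}. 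That is the device that replaces the numerical verification you would need.

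A second, related gap is the descent from the resolution back to $M$. You take an abstract resolution $\hat Z\to Z$ with an arbitrary K\"ahler form $\hat\omega$, so the class $[\pi^*\Omega_0+s\hat\omega]$ bears no relation to classes on $M$, and your passage ``$s\to0$, take a weak limit in $[\Omega_0|_{Z_{\mathrm{sm}}}]$, then regularize'' has no mechanism for recovering a smooth potential defined near $Z$ in $M$. The paper instead blows up $M$ itself and chooses $\alpha=\pi^*\chi+c\sum F_i$, so that outside the exceptional locus $s\alpha=s\pi^*\chi+sc\,\sqrt{-1}\partial\bar\partial\log|s_E|^2$ is, up to the absorbable term $s\pi^*\chi$, exact with an explicit potential; this is exactly what allows the final potential $\phi_f=\phi_O+(s-\epsilon_6 s^N)c\log|s_E|^2$ to be pushed forward to $M$. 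Your closing step (adding $A\rho$ with $\rho$ vanishing on $Z$ to extend from $Z_{\mathrm{sm}}$ to a neighborhood in $M$) is consistent with the paper's tubular-neighborhood extension in the smooth case, but it presupposes that you already hold a smooth form in the correct class on (a neighborhood of $K$ in) $Z$ itself, which is precisely what the unresolved steps above were supposed to deliver.
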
 
	
	We prove this by induction on $ m = \text{dim}(Z) $. If $ m = 0 $, this is trivial. Assume $ 0 < m < n $ and that the proposition has been proven for all subvarieties of dimension $ < m $. If $ Z $ is smooth, then by Theorem \ref{maindhymthm} applied to the manifold $ Z $, we get a smooth function $ \phi_Z $ on $ Z $ such that $ \Omega = \Omega_0|_Z +  \sqrt{-1} \partial \bar{\partial} \phi_Z $ satisfies the cone condition $ C_{\chi|_Z, \theta} $ on $ Z $. We define, on a tubular neighborhood $ U $ of $ Z $, the form $ \Omega_U = \Omega_0 +  \sqrt{-1} \partial \bar{\partial} (\phi_Z(p(x)) + C d^2_Z(x)) $, where $ p $ is the projection from $ U  $ to $ Z $, $ d_Z $ is the distance from $ Z $ as measured by a K\"ahler form ($ d_Z^2 $ is smooth for $ U $ sufficiently small) and $ C $ is a positive constant. By taking $ C $ sufficiently large, it can be seen that $ \Omega_U $ satisfies the cone condition $ C_{\chi, \theta}$ on $ U $. Hence we are done if $ Z $ is smooth. \\
	
	If $ Z $ is singular, we obtain a manifold $ \tilde{M} $ by a finite sequence of blow-ups along the components of $ Z_{sing} $ such that the proper transform $ \tilde{Z} $ of $ Z $ is smooth. Let $ \pi: \tilde{M} \rightarrow M $ be the projection. $ \pi $ is the identity outside $ E = \pi^{-1}(Z_{sing}) $ and $ E $ is the union of the (pullbacks of the) exceptional divisors $ E_1, \dots, E_k $. As in \cite[Chapter VII, Proposition 12.4]{dembook}, for small enough $ c > 0 $,  $ \displaystyle \alpha := \pi^{*}\chi + c(F_1 + \dots + F_k) > 0 $ on $ \tilde{M} $, where $ F_i $ is the curvature of the line bundle $ [-E_i] $ with respect to a metric $ h_i $ (pulled back to $ \tilde{M} $). Also, we note that if $ s_i $ is the defining section of $ E_i $, then outside $ E $, we have 
	\begin{equation}
		\alpha = \pi^* \chi + c \sqrt{-1} \partial \bar{\partial} \log |s_E|^2
	\end{equation}where $ |s_E|^{2} := \prod_i |s_i|^{2}_{h_i} $.
	\\  
	
	 Now with $ \eta $ as before, for each $ t > 0 $, we find $ {}_{1}\Omega_t \in [\Omega_t] $ such that $ P_\theta^k({}_{1}\Omega_t, \chi) > 0 $ on $ M $ for $ k = 0, \dots, n $. By continuity, there exist constants $ \epsilon_t > 0 $ such that $ P_\theta^k({}_{1}\Omega_t, \chi) > \epsilon_t \cdot {}_{1}\Omega_t^k $ on $ M $ for $ k = 0, \dots, n $. \\
	
	We now want to pull back these forms to obtain forms on $ \tilde{M} $ satisfying the cone condition. However, as $ \pi $ is only an isomorphism outside $ E $, the pullback of a K\"ahler form will only be non-negative. To overcome this issue, we will add small amounts of the K\"ahler metric $ \alpha $ to the pulled back forms. To do this while still maintaining the cone conditions, we need the next proposition:
	
	\begin{prop}
		\label{degentonon}
		Let $ 0 < \epsilon < 1 $, $ 0 < a < 1 $ and $ A > 0 $. Suppose $ \Omega $, $ \chi $ are positive $ (1,1) $-forms on an $ n $-dimensional complex manifold $ X $ satisfying $ P_\theta^k(\Omega, \chi) \geq \epsilon \Omega^k  $ for $ 1 \leq k \leq m \leq n $ at a point $ p \in X $. Let $ \alpha $ be a positive-definite $ (1, 1) $-form at $ p $ and define $ \Omega_{s, A} = \Omega + As \alpha $ and $ \chi_s = \chi + s^{N} \alpha $, where $ N > \text{dim}(X) = n $. As $ \alpha $ is positive definite at $ p $, there exists $ C_\chi > 0 $ such that $ \chi \leq C_\chi \alpha $ at $ p $. We claim that there is an $ s_0 = s_0(N, C_\chi, m, A, a, \theta) > 0 $ such that, at $ p $, $ P_\theta^k(\Omega_{s, A}, \chi_s) \geq a\epsilon {\Omega_{s, A}}^k $ for every $ s < s_0 $ and $ k \leq m $. Note that $ s_0 $ depends on $ p $ only through $ C_\chi $.
	\end{prop}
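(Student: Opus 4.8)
\emph{Strategy.} The heuristic is that $\Omega_{s,A}=\Omega+As\alpha$ is perturbed at order $s$ while the background $\chi_s=\chi+s^{N}\alpha$ is perturbed only at order $s^{N}$ with $N>n$, so the enlargement of $\Omega$ should more than compensate for the tiny change of background and push the pair further into the cone. I would make this precise pointwise at $p$ by a double binomial expansion using \eqref{binomsum}. Expand each $P^{r}_{\theta}(\,\cdot\,,\beta)$ in its second slot as $\sum_{j}p^{r}_{j}\,\Omega^{r-j}\beta^{j}$ (so $p^{r}_{0}=1$, and from the displayed formulas $p^{r}_{1}=0$); substituting $\chi_{s}=\chi+s^{N}\alpha$ gives $P^{r}_{\theta}(\Omega,\chi_{s})=P^{r}_{\theta}(\Omega,\chi)+E_{r}$ with
\[
E_{r}=\sum_{j}p^{r}_{j}\,\Omega^{r-j}\sum_{i=1}^{j}\binom{j}{i}s^{Ni}\,\alpha^{i}\chi^{j-i},
\]
a sum of monomials of total degree $r$ in $\Omega,\chi,\alpha$, each carrying $\alpha$-degree $\ge 1$ and a factor $s^{Ni}$ with $i\ge 1$. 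Applying \eqref{binomsum} in the first slot and then reassembling, I obtain
\[
P^{k}_{\theta}(\Omega_{s,A},\chi_{s})=\sum_{r=0}^{k}\binom{k}{r}P^{r}_{\theta}(\Omega,\chi_{s})(As\alpha)^{k-r}=\underbrace{P^{k}_{\theta}(\Omega+As\alpha,\chi)}_{=:Q_{k}}+\underbrace{\sum_{r=0}^{k}\binom{k}{r}E_{r}\,(As\alpha)^{k-r}}_{=:R_{k}}.
\]

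\emph{The principal term.} Here no smallness of $s$ is needed. For every $r\le k\le m$ the hypothesis gives that $P^{r}_{\theta}(\Omega,\chi)-\epsilon\,\Omega^{r}$ is a positive $(r,r)$-form at $p$ (for $r=0$ this reads $1-\epsilon\ge 0$ since $0<\epsilon<1$), and wedging a strongly positive form with a power of the positive $(1,1)$-form $As\alpha$ preserves strong positivity; the notions of positivity coincide here by the parenthetical remark after \eqref{eq:formdhym}. Summing over $r$ and using the binomial theorem for commuting $(1,1)$-forms,
\[
Q_{k}=\sum_{r}\binom{k}{r}P^{r}_{\theta}(\Omega,\chi)(As\alpha)^{k-r}\ \ge\ \epsilon\sum_{r}\binom{k}{r}\Omega^{r}(As\alpha)^{k-r}=\epsilon\,(\Omega+As\alpha)^{k}=\epsilon\,\Omega_{s,A}^{k}.
\]

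\emph{The error term, and the main obstacle.} Everything then hinges on bounding $R_{k}$ below by a \emph{small} multiple of $\Omega_{s,A}^{k}$, and this is the step I expect to be most delicate. A typical monomial of $R_{k}$ is $c\,A^{k-r}s^{\,k-r+Ni}\,\Omega^{r-j}\chi^{j-i}\alpha^{\,i+k-r}$, with $0\le j\le r\le k\le m$, $1\le i\le j$, and $|c|$ bounded in terms of $\theta$ and $m$; note its $\alpha$-degree $i+k-r$ is $\ge 1$. Writing $a=r-j$ and using $\chi\le C_{\chi}\alpha$ at $p$, one has $\Omega^{a}\chi^{j-i}\alpha^{i+k-r}\le C_{\chi}^{\,j-i}\,\Omega^{a}\alpha^{k-a}$, and since $\Omega_{s,A}^{k}=\sum_{\ell}\binom{k}{\ell}(As)^{k-\ell}\Omega^{\ell}\alpha^{k-\ell}$ dominates its $\ell=a$ term,
\[
\Omega^{a}\chi^{j-i}\alpha^{i+k-r}\ \le\ \frac{C_{\chi}^{\,j-i}}{\binom{k}{a}(As)^{k-a}}\,\Omega_{s,A}^{k}.
\]
Hence each monomial of $R_{k}$ is bounded below by $-C_{1}\,s^{(k-r+Ni)-(k-a)}\Omega_{s,A}^{k}=-C_{1}\,s^{\,Ni-j}\Omega_{s,A}^{k}$, and because $i\ge 1$ and $j\le m\le n<N$ the exponent $Ni-j$ is $\ge N-n>0$; for $s<1$ every term is therefore $\ge -C_{1}\,s^{\,N-n}\Omega_{s,A}^{k}$, with $C_{1}$ depending only on $\theta,m$, on $\max(A,A^{-1})^{n}$, and on $C_{\chi}$. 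Summing the finitely many monomials gives $R_{k}\ge -C\,s^{\,N-n}\Omega_{s,A}^{k}$ with $C=C(N,C_{\chi},m,A,\theta)$, uniform in $\Omega,\chi$ and entering the dependence on $p$ only through $C_{\chi}$. Combining this with the bound on $Q_{k}$,
\[
P^{k}_{\theta}(\Omega_{s,A},\chi_{s})\ \ge\ \bigl(\epsilon-C\,s^{\,N-n}\bigr)\Omega_{s,A}^{k},
\]
which is $\ge a\epsilon\,\Omega_{s,A}^{k}$ as soon as $C\,s^{\,N-n}\le(1-a)\epsilon$; taking $s_{0}$ to be this threshold handles all $k\le m$ simultaneously. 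The point I would be most careful about is exactly this bookkeeping: that after paying the factor $s^{k-a}$ one loses when comparing an error monomial against $\Omega_{s,A}^{k}$, the remaining power of $s$ is still strictly positive — which is precisely where the assumption $N>n$ enters — and that the accumulated constant is uniform, depending on the point only via $C_{\chi}$.
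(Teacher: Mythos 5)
Your proof is correct, but it takes a more direct route than the paper's. The paper argues by induction on $m$: it splits $\Omega_{s,A}=\Omega_{s,A/2}+\tfrac{As\alpha}{2}$, invokes the inductive hypothesis (with the amplified constant $\tfrac{a+1}{2}$) for the lower-order terms of the first-slot binomial expansion, and only then expands the remaining top term in the second slot, absorbing the $s^{N}$-errors into the gap between $\tfrac{a+1}{2}\epsilon$ and $a\epsilon$; its final step, $P^{\nu}_{\theta}(\Omega_{s,A/2},\chi)\ge\epsilon(\Omega_{s,A/2})^{\nu}$, is exactly your bound on $Q_k$. You avoid the induction entirely by doing the double binomial expansion in one stroke, writing $P^{k}_{\theta}(\Omega_{s,A},\chi_s)=Q_k+R_k$ and comparing each monomial of $R_k$ with the single term $\binom{k}{r-j}(As)^{k-(r-j)}\Omega^{r-j}\alpha^{k-(r-j)}$ of $\Omega_{s,A}^{k}$; your exponent count $Ni-j\ge N-n>0$ is precisely where $N>n$ enters, playing the role of the paper's comparison of $s^{N}d_r(s)$ against $\bigl(\tfrac{As}{2}\bigr)^{r}$ with $r\le\nu\le n<N$. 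The positivity manipulations (wedging inequalities between $(1,1)$-forms with strongly positive forms, and using $\chi\le C_\chi\alpha$) are at the same level of rigor as the paper's own, and your argument handles all $k\le m$ simultaneously, so nothing is lost by dropping the induction; if anything the uniformity in $p$ through $C_\chi$ alone is more transparent in your bookkeeping. Two small remarks: (i) your threshold $Cs^{N-n}\le(1-a)\epsilon$ makes $s_0$ depend on $\epsilon$, which is absent from the parameter list in the statement; note, though, that the paper's own absorption condition (an $O(s^{N})$ quantity measured against $\epsilon\tfrac{1-a}{2}\binom{\nu}{r}\bigl(\tfrac{As}{2}\bigr)^{r}$) has the same feature, and in every application of the proposition $\epsilon$ is a fixed constant, so this discrepancy is harmless; (ii) reusing the letter $a$ for the exponent $r-j$ clashes with the constant $a$ of the statement and should be renamed.
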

	\begin{proof}
		We use induction on $ m $. For $ m = 1 $, $ P^m_{\theta}(\Omega_{s, A}, \chi_{ s}) = \Omega_{s, A} \geq a \epsilon \Omega_{s, A} $ for any $ s > 0 $. Assume that the proposition has been proven for all $ m < \nu \leq n $. By the induction hypothesis, it suffices to show that $ \exists s_0 = s_0(N, C_\chi, \nu, A, a, \theta) $ such that $P^{\nu}_{\theta}(\Omega_{s, A}, \chi_s) \geq a\epsilon {\Omega_{s, A}}^{\nu} $. As $ P^{\nu}_{\theta}(\Omega_{s, A}, \chi_s) = P^{\nu}_{\theta}(\Omega_{s, \frac{A}{2}} + \frac{As\alpha}{2}, \chi_s) $, 
		\begin{alignat*}{2}
		 P^{\nu}_{\theta}(\Omega_{s, A}, \chi_s) = \sum_{r } \binom{\nu}{r} P^{\nu - r }_{\theta}(\Omega_{s, \frac{A}{2}}, \chi_s)\bigg(\frac{As\alpha}{2}\bigg)^{r} \\
		  = P^{\nu}_{\theta}(\Omega_{s, \frac{A}{2}}, \chi_s) +  \sum_{r > 0} \binom{\nu}{r} P^{\nu - r }_{\theta}(\Omega_{s, \frac{A}{2}}, \chi_s)\bigg(\frac{As\alpha}{2}\bigg)^{r}
		\end{alignat*}
		By the induction hypothesis, there is an $ s_1 = s_1(N, C_\chi, \nu - 1, \frac{A}{2}, \frac{a + 1}{2}, \theta) > 0 $ such that if $ s < s_1  $ and $ r \leq \nu - 1 $, then 
		$$ P^{r }_{\theta}(\Omega_{s, \frac{A}{2}}, \chi_s) \geq \epsilon \bigg(\frac{a + 1}{2} \bigg)(\Omega_{s, \frac{A}{2}})^k $$. So 
		$$ P^{\nu}_{\theta}(\Omega_{s, A}, \chi_s) \geq P^{\nu}_{\theta}(\Omega_{s, \frac{A}{2}}, \chi_s) + \sum_{r > 0} \binom{\nu}{r} \epsilon \frac{a + 1}{2}(\Omega_{s, \frac{A}{2}})^{\nu - r} \bigg(\frac{As\alpha}{2}\bigg)^{r}  $$
		\\
		
		To bound the first term on the right, we write $$ P^{\nu}_{\theta}(\Omega_{s, \frac{A}{2}}, \chi_s) = P^{\nu}_{\theta}(\Omega_{s, \frac{A}{2}}, \chi) + \sum_{p > 1, q} c_{p, q} (\Omega_{s, \frac{A}{2}})^{\nu - p - q} (s^N \alpha)^{p} \chi^q  $$where $ c_{p, q} $ depend only on $ \nu $ and $ \theta $. If $ c_{p, q} \geq 0$ for some pair $ (p, q) $, then $ \displaystyle c_{p, q} (\Omega_{s, \frac{A}{2}})^{\nu - p - q} (s^N \alpha)^{p} \chi^q \geq 0 $. If $ c_{p, q} < 0 $, then $ \displaystyle c_{p, q} (\Omega_{s, \frac{A}{2}})^{\nu - p - q} (s^N \alpha)^{p} \chi^q \geq c_{p, q} C_\chi^{q} (\Omega_{s, \frac{A}{2}})^{\nu - p - q} (s^N)^p \alpha^{p + q} \chi $ so that 
		$$ \sum_{p > 1, q} c_{p, q} (\Omega_{s, \frac{A}{2}})^{\nu - p - q} (s^N \alpha)^{p} \chi^q \geq \sum_{r > 0} s^N d_r(s) (\Omega_{s, \frac{A}{2}})^{\nu - r} \alpha^{r} $$ where $ d_r(s) $ is a polynomial in $ s $ with coefficients depending on the $ c_{p, q} $'s and $ C_\chi $ only. Thus, we can find a constant $ s_2 = s_2(N, C_\chi, m, A, a, \theta ) $ such that for all $ s < s_2 $, $ \displaystyle d_r(s) + \epsilon \binom{\nu}{r} \bigg( \frac{a + 1}{2}\bigg) \bigg(\frac{As}{2} \bigg)^r \geq \epsilon a \binom{\nu}{r} \bigg(\frac{As}{2} \bigg)^r  $. So we get
		$$ P^{\nu}_{\theta}(\Omega_{s, A}, \chi_s) \geq P^{\nu}_{\theta}(\Omega_{s, \frac{A}{2}}, \chi) + \sum_{r > 0} \epsilon a \binom{\nu}{r} (\Omega_{s, \frac{A}{2}})^{\nu - r} \bigg(\frac{As\alpha}{2}\bigg)^{r} $$for all $ s < s_0 := \min(s_1, s_2) $. 
		\\
		
		Lastly, for any $ s > 0 $,
		\begin{alignat*}{2}
		P^{\nu}_{\theta}(\Omega_{s, \frac{A}{2}}, \chi) = \sum_{r } \binom{\nu}{r} P^{\nu - r }_{\theta}(\Omega, \chi) \bigg(\frac{As\alpha}{2}\bigg)^{r} \\
		\geq \sum_{r } \epsilon \binom{\nu}{r} \Omega^{\nu -r} \bigg(\frac{As\alpha}{2}\bigg)^{r} \\
		= \epsilon (\Omega_{s, \frac{A}{2}})^{\nu}
		\end{alignat*}So for any $ s < s_0 $, we have 
		$$ P^{\nu}_{\theta}(\Omega_{s, A}, \chi_s) \geq a \epsilon (\Omega_{s, A})^{\nu}  $$thus completing the proof
		
	\end{proof}

	Hence, for each $ s > 0 $, if we define $ \Omega_{t, s} := \pi^* {}_{1}\Omega_t + s \alpha $ and $ \chi_{ s} = \pi^* \chi + s^{N} \alpha $, we see by the above proposition, there exist constants $ \bar{s}_t \in (0, 1) $ such that if $ s_t = \bar{s}_t(1 - e^{-t} )  $, then $ P_\theta^k(\Omega_{t, s_t}, \chi_{s_t}) \geq \frac{\epsilon_t}{2} \cdot \Omega_{t, s_t}^k $ for $ k = 1, \dots, n - 1 $.
	\\
	
	From here on, the restriction of a differential form on $ \tilde{M} $ to $ \tilde{Z} $ will be denoted by the same symbol. We will produce a metric satisfying the cone condition on $ \tilde{Z} $ by applying Lemma \ref{regandglue} on $ \tilde{Z} $. Let $ L $ be a very ample line bundle on $ \tilde{Z} $ and let $ \tilde{Y} $ be the zero set of a non-trivial holomorphic section of $ L $. We now use a concentration of mass similar to Theorem \ref{concofmass} to find a current $ \Theta \in [\pi^* \Omega_0] $ on $ \tilde{Z} $ such that $ \Theta \geq \beta [\tilde{Y}] $ and $ \Theta $ is the weak limit of metrics satisfying the cone condition. \\
	
	 Let $ u > 0 $ be a positive constant and for each $ t > 0 $, as in the proof of theorem \ref{concofmass} , define $ \phi_{\tilde{Y}} $ and $ \chi_{u, t} := \chi_{u} +  \sqrt{-1} \delta \partial \bar{\partial} \log(\phi^2_{\tilde{Y}}(x) + t^2) $ where $ \delta $ is chosen small enough that $ \frac{\chi_{u, t}^m}{\chi_{u}^m} > 1 - \frac{\varepsilon_{m, \theta}}{10} $ for all sufficiently small $ t $. Consider the following PDE for $ {}_{1} \Omega_{t, s_t} \in [ \Omega_{t, s_t} ] $
	\begin{equation}
	\label{eqfamilyZ}
		P^m_{\theta}({}_{1} \Omega_{t, s_t}, \chi_{s_t}) = \chi_{u, t}^m - \chi_{s_t}^{m} + B_t \chi_{s_t}^{m}  
	\end{equation}where $ B_t $ is a constant determined by integrating both sides of the equation over $ \tilde{Z} $. As $ \int_{\tilde{Z}} P^m_{\theta}(\pi^* \Omega_0, \pi^* \chi) > 0 $ and as $ \int_{\tilde{Z}} \chi_{u, t}^m = \int_{\tilde{Z}} \chi_{u}^m $, we see that by taking $ u $ small enough, we can have $ B_t > 0 $ for $ t << u $ and hence, 
	\begin{equation*}
		f_t := \frac{\chi_{u, t}^m}{\chi_{s_t}^{m}} - 1 + B_t > -\varepsilon_{m, \theta}
	\end{equation*} so that the equations \ref{eqfamilyZ} can be solved for each $ t > 0 $ small enough. As in Theorem \ref{concofmass}, we can take a sequence of $ t's $ approaching $ 0 $ such that the corresponding sequence of $ {}_{1} \Omega_{t, s_t} $'s converges weakly to a current $ \Theta \in [\pi^* \Omega_0] $ satisfying $ \Theta \geq 2\beta [\tilde{Y}] $ for some $ \beta > 0 $. 
	\\
	
	Let $ \chi_Y $ be a K\"ahler metric in the same cohomology class as $ [\tilde{Y}] $ and let $ 0 <  \epsilon_1 < 1 $ be such that $ \displaystyle \beta \chi_Y -  \epsilon_1 \chi_{ s} \geq \frac{\beta}{2}\chi_Y $ for all small $ s $, say $ s < 1 $. For each $ t > 0 $ and $ s > 0 $, set $$ {}_{2} \Omega_{t, s_t} = {}_{1} \Omega_{t, s_t} - \beta [\tilde{Y}] + \beta \chi_Y - 3 \epsilon_1 \chi_s $$  $$ {}_{3, s} \Omega_{t, s_t} = {}_{2} \Omega_{t, s_t} + s \alpha $$ and $$ T'_s := \lim_{t \rightarrow 0 } {}_{3} \Omega_{t, s_t} = \Theta - \beta [\tilde{Y}] + \beta \chi_Y + s \alpha -  \epsilon_1 \chi_s =: T_s -  \epsilon_1 \chi_{ s} $$($ \lim $ denotes the weak limit). By the choice of $ \epsilon_1 $ and as $ \Theta \geq \beta [\tilde{Y}] $, $ T'_s $ is a K\"ahler current for all $ s $. \\
	
	\begin{prop}
		\label{currentconeZ}
		There exists $ s_1 > 0 $ such that for all $ s < s_1 $, the pairs $ T'_s, \chi_{ s} $ satisfy the cone condition on $ \tilde{Z} \cap \tilde{Y}^{c} $ in the sense of definition \ref{def:conecurrent}. 
	\end{prop}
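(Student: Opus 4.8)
The plan is to reduce, via closedness of the cone condition for currents under weak limits, to a pointwise statement about the smooth approximants, and then to move the background form from the degenerating family $\chi_{s_t}$ to the fixed form $\chi_s$ using Proposition \ref{degentonon}. Throughout write $m=\dim\tilde Z$.

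Fix a small $s$. On $\tilde Z\cap\tilde Y^c$ the current $[\tilde Y]$ vanishes, so there $T'_s$ is the weak limit (along the chosen sequence $t\to 0$) of the \emph{smooth} forms
\begin{equation*}
{}_3\Omega_{t,s_t}\big|_{\tilde Z\cap\tilde Y^c}=\big({}_1\Omega_{t,s_t}+s\alpha\big)+\big(\beta\chi_Y-3\epsilon_1\chi_s\big).
\end{equation*}
Since $\overline{C_{\chi_s,\theta}}$ is closed under weak limits, and since for a smooth form the cone condition for currents amounts to pointwise membership in the closure of $C_{\chi_s(x),\theta}$, it suffices to show that each of these forms satisfies $C_{\chi_s,\theta}$ on $\tilde Z\cap\tilde Y^c$, for every $t$ in the sequence.

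For this, first note that the solution ${}_1\Omega_{t,s_t}$ of \ref{eqfamilyZ} satisfies the cone condition $C_{\chi_{s_t},\theta}$ on the compact manifold $\tilde Z$; by compactness ${}_1\Omega_{t,s_t}+\cot(\theta)\chi_{s_t}\in\Gamma_{\chi_{s_t},\theta-\delta_t}$ for some $\delta_t>0$, so Lemma \ref{lem:bound}(b) yields $\epsilon_t'\in(0,1)$ with $P_\theta^k({}_1\Omega_{t,s_t},\chi_{s_t})\geq\epsilon_t'\cdot{}_1\Omega_{t,s_t}^k$ for $k=1,\dots,m-1$. Now apply Proposition \ref{degentonon} on $\tilde Z$ with $\Omega={}_1\Omega_{t,s_t}$, background $\chi_{s_t}$, the K\"ahler form $\alpha|_{\tilde Z}$, $A=1$ and $a=\tfrac12$; since $\pi^*\chi\leq C'\alpha$ on $\tilde M$ and $s_t<1$, we have $\chi_{s_t}\leq (C'+1)\alpha$ with $C'$ independent of $t$, so the threshold $s_0$ produced by that proposition is independent of $t$ and of the point. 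Hence for $s<s_0$,
\begin{equation*}
P_\theta^k\big({}_1\Omega_{t,s_t}+s\alpha,\ \chi_{s_t}+s^N\alpha\big)\geq\tfrac{\epsilon_t'}{2}\big({}_1\Omega_{t,s_t}+s\alpha\big)^k>0,\qquad k=1,\dots,m-1,
\end{equation*}
i.e. $({}_1\Omega_{t,s_t}+s\alpha,\ \chi_{s_t}+s^N\alpha)\in C^{m-1}_\theta$. Because $\chi_s=\pi^*\chi+s^N\alpha\leq\pi^*\chi+s_t^N\alpha+s^N\alpha=\chi_{s_t}+s^N\alpha$, shrinking the background (which preserves the cone condition) gives $({}_1\Omega_{t,s_t}+s\alpha,\ \chi_s)\in C^{m-1}_\theta$. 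Finally, taking $\epsilon_1$ small enough (a harmless strengthening of its earlier choice) that $3\epsilon_1\chi_s\leq\beta\chi_Y$ for all $s<1$, the form $\beta\chi_Y-3\epsilon_1\chi_s$ is nonnegative, so the binomial identity \ref{binomsum} together with nonnegativity of the lower $P_\theta^r$ gives $P_\theta^k\big(({}_1\Omega_{t,s_t}+s\alpha)+(\beta\chi_Y-3\epsilon_1\chi_s),\ \chi_s\big)\geq 0$ for $k=1,\dots,m-1$. The first argument is exactly ${}_3\Omega_{t,s_t}$ on $\tilde Z\cap\tilde Y^c$, so the reduction above is complete with $s_1:=\min(s_0,1)$.

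The main obstacle is precisely the transfer of the cone condition from the backgrounds $\chi_{s_t}$ — which collapse toward the degenerate form $\pi^*\chi$ along the exceptional locus $E\cap\tilde Z$ as $t\to 0$ — to the fixed, nondegenerate background $\chi_s$. Enlarging a background form in general destroys the cone condition, so one cannot argue directly; it is Proposition \ref{degentonon}, whose threshold $s_0$ does \emph{not} depend on the (possibly vanishing) constants $\epsilon_t'$, that supplies exactly the extra positivity needed to pass from $\chi_{s_t}$ to $\chi_{s_t}+s^N\alpha$, after which the inequality $\chi_{s_t}+s^N\alpha\geq\chi_s$ allows one to return to the fixed background and take the weak limit.
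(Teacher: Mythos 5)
Your reduction to the smooth approximants and the final passage to the weak limit are fine, and the overall shape of the argument (use Proposition \ref{degentonon} to pass from the degenerating backgrounds $\chi_{s_t}$ to $\chi_{s_t}+s^N\alpha$, then shrink the background to $\chi_s$) matches the paper's. The gap is in the assertion that the threshold $s_0$ supplied by Proposition \ref{degentonon} ``does not depend on the (possibly vanishing) constants $\epsilon_t'$.'' Although the statement of Proposition \ref{degentonon} lists $s_0=s_0(N,C_\chi,m,A,a,\theta)$, its proof requires $s<s_2$ where $s_2$ must be small enough that the error terms coming from replacing $\chi$ by $\chi+s^N\alpha$ (which are of order $s^N$) are absorbed by $\epsilon\binom{\nu}{r}\frac{1-a}{2}\big(\frac{As}{2}\big)^r$; since $N>n\geq r$, this forces $s_2\leq C\,\epsilon^{1/(N-r)}$, so the threshold genuinely degenerates with $\epsilon$. (The paper uses this implicitly elsewhere: the thresholds $\bar s_t$ chosen right after the proof of Proposition \ref{degentonon} are allowed to depend on $t$ through $\epsilon_t$.) In your argument the margins $\epsilon_t'$ for $({}_{1}\Omega_{t,s_t},\chi_{s_t})$ come from compactness at each fixed $t$, and there is no uniform lower bound --- indeed one expects $\epsilon_t'\to 0$, since the weak limit $\Theta$ is singular along $\tilde Y$. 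Consequently your $s_0$ may shrink to $0$ along the sequence, and no single $s$ works for all $t$.

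The paper's proof avoids this by extracting a $t$-uniform margin \emph{before} invoking Proposition \ref{degentonon}: outside $\tilde Y$ one has ${}_{2}\Omega_{t,s_t}\geq {}_{1}\Omega_{t,s_t}+\frac{\beta}{2}\chi_Y$, and since $\chi_{s_t}$ is bounded above by a fixed multiple of $\chi_Y$ independently of $t$, this cushion yields $P^k_\theta({}_{2}\Omega_{t,s_t},\chi_{s_t})\geq 2\varepsilon\,{}_{2}\Omega_{t,s_t}^k$ on $\tilde Y^c$ with $\varepsilon$ independent of $t$; only then is Proposition \ref{degentonon} applied, with this uniform $\varepsilon$, to obtain a $t$-independent $s_0$. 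To repair your argument you should therefore not postpone the term $\beta\chi_Y-3\epsilon_1\chi_s$ to the end as a mere nonnegative perturbation: it is precisely the source of the $t$-uniform positivity that makes the threshold in Proposition \ref{degentonon} independent of $t$.
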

	\begin{proof}
		Outside $ \tilde{Y} $, we have $ \displaystyle {}_{2} \Omega_{t, s_t} \geq {}_{1} \Omega_{t, s_t} + \frac{\beta}{2}\chi_Y  $, so by proposition \ref{degentonon} and the fact that $ \chi_{s_t} $ can be bounded independently of $ t $ in terms of $ \chi_Y $, $ \exists \varepsilon > 0 $ such that $ P^k_{\theta}({}_{2} \Omega_{t, s_t}, \chi_{s_t}) \geq 2 \varepsilon \cdot {}_{2} \Omega_{t, s_t}^k  $ for $ k = 1, \dots, m $ on $ \tilde{Y}^c $. Hence, by proposition \ref{degentonon}, there exists $ s_0 > 0 $ independent of $ t $ such that $ \forall s < s_0 $, $  P^k_{\theta}({}_{3, s} \Omega_{t, s_t}, \chi_{s_t} + s^N \alpha) \geq \varepsilon \cdot {}_{3, s} \Omega_{t, s_t}^k $ on $ \tilde{Y}^{c} $ for $ k = 1, \dots, m $. In particular, $ P^k_{\theta}({}_{3, s} \Omega_{t, s_t}, \chi_{s_t} + s^N \alpha) > 0 $. As $ \chi_{ s} < \chi_{s_t} + s^N \alpha $, we also have $ P^k_{\theta}({}_{3, s} \Omega_{t, s_t}, \chi_{s} ) > 0 $ and hence, $ T'_s $, the weak limit of $ {}_{3, s} \Omega_{t, s_t}  $ also satisfies the cone condition outside $ \tilde{Y} $ with respect to the K\"ahler metric $ \chi_{ s} $.
	\end{proof}

As $ \displaystyle T_s - \epsilon_1  \chi_s  = T'_s \geq \Theta - \beta [\tilde{Y}] + s \alpha + \frac{\beta}{2} \chi_Y $ for all $ s < 1 $, the same reasoning as above shows that there exists $ \epsilon_2 > 0 $ such that $ \displaystyle T'_s - 3\epsilon_2 \alpha  \geq \Theta - \beta [\tilde{Y}] + s \alpha + \frac{\beta}{4} \chi_Y $ for all $ s < 1 $. By the proof of Proposition $ \ref{currentconeZ} $, there exists $ s_2 > 0 $ such that for all $ s < s_2 $, the currents $ T'_s - 3 \epsilon_2 \alpha $ satisfies the cone condition on $ \tilde{Z}
\cap \tilde{Y}^{c} $ with respect to $ \chi_s $.  \\

Since $ T'_s \geq \beta [\tilde{Y}] $ and the pairs $ (T'_s - 3 \epsilon_2 \alpha, \chi_s ) $ satisfy the cone condition on $ \tilde{Z} \cap \tilde{Y}^{c} $ (for $ s < s_2 $), we are in the position to apply the regularization Lemma \ref{regandglue} on the K\"ahler manifold $ (\tilde{Z}, \alpha) $. Define $ c > 0 $ to be the constant required in Lemma \ref{regandglue}. Note that $ c $ only depends on $ \alpha, \beta, \epsilon_2, dim(\tilde{Z}) $ and is hence independent of $ s $.  Let $ \tilde{S} $ be the set $ E_c(\Theta) \cup \tilde{Y} $. From the definition of $ T_s $, it follows that $ E_c(T_s) \subseteq \tilde{S} $ for all $ s $. Let $ S = \pi(\tilde{S}) $. By the induction hypothesis, there exists a neighborhood $ U $ of $ S $ and a $ \Omega_S \in [\Omega_0] $ on $ U $ satisfying the cone condition with respect to $ \chi $ on $ U $. By continuity, there exist constants $ \epsilon_3, \epsilon_4 > 0 $ such that $ P^k_{\theta}(\Omega_S - \epsilon_3 \chi, \chi ) \geq 2\epsilon_4 \Omega_S^{k} $ for $ k = 1, \dots, n - 1 $ on $ U $ (by shrinking $ U $ if necessary). Without loss of generality, we can assume that $ \epsilon_3 < \epsilon_1 $. By Proposition \ref{degentonon}, we see that for all sufficiently small $ s > 0 $, $ P^k_{\theta}(\Omega_S + s \alpha - \epsilon_3 \pi^* \chi, \chi_s ) \geq \epsilon_4 (\Omega_S + s \alpha - \epsilon_3 \pi^* \chi ) ^{k} $ on $ \pi^{-1}(U) $ and hence on a neighborhood of $ \tilde{S} $ in $ \tilde{Z} $ for $ k = 1, \dots, m $. Define $ \Omega_{S, s} = \pi^*\Omega_S + s \alpha $.

\begin{lem}
	\label{pullbackS}
	There exists an $ s_3 > 0 $ depending only on $ \theta $, $ m $, $ \epsilon_4 $ such that for all $ s < s_3 $ the pairs $ \Omega_{S, s} - \epsilon_3 \chi_{ s}, \chi_{ s} $ satisfy the cone condition on $ \pi^{-1}(U) $
\end{lem}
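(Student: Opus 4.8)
The plan is to push the quantitative cone estimate for $\Omega_S-\epsilon_3\chi$ on $U$ up to $\tilde Z$ through $\pi$, and then absorb the two error terms $s\alpha$ and $-\epsilon_3 s^N\alpha$ by a single application of Proposition~\ref{degentonon}.

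First I would upgrade the hypothesis. Since $\epsilon_3\chi>0$ on $U$ we have $0<\Omega_S-\epsilon_3\chi\le\Omega_S$, and wedge powers of positive $(1,1)$-forms are monotone ($0\le A\le B$ implies $A^k\le B^k$, since all cross terms are products of positive forms), so
\[
P^k_\theta(\Omega_S-\epsilon_3\chi,\chi)\ \ge\ 2\epsilon_4\,\Omega_S^k\ \ge\ 2\epsilon_4(\Omega_S-\epsilon_3\chi)^k\qquad\text{on }U
\]
for the relevant range of $k$. Pulling back by the holomorphic map $\pi$, which commutes with linear combinations and wedge products and hence with the polynomials $P^k_\theta$, and which carries positive forms to semi-positive ones, gives $P^k_\theta\big(\pi^*(\Omega_S-\epsilon_3\chi),\pi^*\chi\big)\ge 2\epsilon_4\big(\pi^*(\Omega_S-\epsilon_3\chi)\big)^k$ on $\pi^{-1}(U)$.

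Next I would apply Proposition~\ref{degentonon} with $\Omega=\pi^*(\Omega_S-\epsilon_3\chi)$, background form $\pi^*\chi$, K\"ahler form $\alpha$, $\epsilon=2\epsilon_4$, and the absolute constants $A=a=\tfrac12$. The point to check is that, although the proposition is stated for strictly positive $\Omega,\chi$, its proof uses only semi-positivity together with a bound $\chi\le C_\chi\alpha$ and positivity of $\alpha$, so it applies to the pulled-back forms, which degenerate only along the exceptional divisor $E$; equivalently, one runs the proposition on $\pi^{-1}(U)\setminus E$, where $\pi$ is biholomorphic, and extends the resulting inequalities to $\pi^{-1}(U)$ by continuity. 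Here $C_\chi$ may be taken uniform over $\pi^{-1}(U)$ because $\pi^*\chi$ and $\alpha$ are smooth forms on the compact manifold $\tilde Z$, and the proposition records that its threshold depends on the point only through $C_\chi$. This produces a constant $s_0$, depending only on $m,\theta$ and the fixed ambient data, such that for $s<s_0$ and all relevant $k$,
\[
P^k_\theta\Big(\pi^*(\Omega_S-\epsilon_3\chi)+\tfrac s2\alpha,\ \pi^*\chi+s^N\alpha\Big)\ \ge\ \epsilon_4\Big(\pi^*(\Omega_S-\epsilon_3\chi)+\tfrac s2\alpha\Big)^k\ \ge\ \epsilon_4\big(\tfrac s2\big)^k\alpha^k\ >\ 0
\]
on $\pi^{-1}(U)$, the background form here being exactly $\chi_s=\pi^*\chi+s^N\alpha$.

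Finally I would reconcile this with the form in the statement. Writing $\Omega_{S,s}-\epsilon_3\chi_s=\pi^*(\Omega_S-\epsilon_3\chi)+(s-\epsilon_3 s^N)\alpha$ and choosing $s$ small (depending only on $\epsilon_3$ and $N$, both already fixed) so that $s-\epsilon_3 s^N\ge\tfrac s2$, one has $\Omega_{S,s}-\epsilon_3\chi_s=\big(\pi^*(\Omega_S-\epsilon_3\chi)+\tfrac s2\alpha\big)+\delta_s\alpha$ with $\delta_s=\tfrac s2-\epsilon_3 s^N\ge0$. Expanding by \eqref{binomsum}, $P^k_\theta(\Omega_{S,s}-\epsilon_3\chi_s,\chi_s)$ is a sum of products of nonnegative powers of $\delta_s\alpha$ with the strictly positive quantities of the previous step, hence strictly positive; so $\Omega_{S,s}-\epsilon_3\chi_s$ satisfies the cone condition on $\pi^{-1}(U)$ for all $s$ below $s_3:=\min\{s_0,1,(2\epsilon_3)^{-1/(N-1)}\}$, a threshold depending only on $\theta$, $m$ and the constants $\epsilon_3,\epsilon_4$ (and the fixed ambient data $N,C_\chi$). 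The only genuine obstacle is justifying the use of Proposition~\ref{degentonon} once $\pi^*\chi$ and $\pi^*(\Omega_S-\epsilon_3\chi)$ degenerate along $E$; the rest is bookkeeping with the binomial identities \eqref{binomsum}.
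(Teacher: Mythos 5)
Your proof is correct and follows essentially the same route as the paper: both arguments hinge on applying Proposition~\ref{degentonon} to $\pi^*(\Omega_S-\epsilon_3\chi)$ with background $\pi^*\chi$ and then using the binomial identity \eqref{binomsum} to account for the discrepancy $-\epsilon_3 s^N\alpha$ between $\epsilon_3\pi^*\chi$ and $\epsilon_3\chi_s$. The only (cosmetic) difference is in the bookkeeping of that last term --- the paper keeps the full $s\alpha$ in the application of the proposition and then dominates the resulting $s^N$-divisible signed terms by the main term $\epsilon_4(s\alpha)^k$ using $N>n\ge k$, whereas you reserve half of $s\alpha$ (taking $A=\tfrac12$) so that the leftover coefficient $\tfrac{s}{2}-\epsilon_3 s^N$ is nonnegative and the expansion has no negative terms at all; your version is arguably cleaner and also makes explicit the continuity/uniform-$C_\chi$ justification for applying Proposition~\ref{degentonon} to forms that degenerate along $E$, a point the paper passes over in silence.
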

\begin{proof}
	\begin{alignat*}{2}
		P^k_{\theta}(\Omega_{S, s} - \epsilon_3 \chi_s, \chi_s ) = \sum_{r} \binom{k}{r} P^{k-r}_{\theta}(\Omega_{S,s} - \epsilon_3 \pi^* \chi, \chi_s ) (-\epsilon_3 s^N)^r \alpha^r \\
		= P^k(\Omega_{S,s} - \epsilon_3 \pi^* \chi, \chi_s) + \sum_{r > 0} \binom{k}{r} P^{k-r}_{\theta}(\Omega_{S,s} - \epsilon_3 \pi^* \chi, \chi_s ) (-\epsilon_3 s^N)^r \alpha^r \\
		\geq \epsilon_4 (s \alpha)^k + \sum_{r > 0} \binom{k}{r} P^{k-r}_{\theta}(\Omega_{S,s} - \epsilon_3 \pi^* \chi, \chi_s ) (-\epsilon_3 s^N)^r \alpha^r
	\end{alignat*} 
	This last sum is a polynomial in $ s $ divisible by $ s^N $ with coefficients which can be bounded by exterior powers $ \alpha $ independent of $ s $ and $ \epsilon_3 $ (we can assume both these numbers are $ < $ 1). Hence, by taking $ s $ small enough, we see that $P^k_{\theta}(\Omega_{S, s} - \epsilon_3 \chi_s, \chi_s ) > 0$
\end{proof}	
	
	Similarly, we also obtain a constant $ \epsilon_5 > 0 $ and family of smooth metrics $ \Omega_{E, s} $ on neighborhood $ U_E $ of $ E $ in $ \tilde{M} $ such that $ (\Omega_{E, s} - \epsilon_5 \chi_{ s}, \chi_s) $ satisfy the cone condition on $ U_E $ for all $ s < s_4 $. 
	\\
	
	For the remainder of the paper, fix $ \displaystyle s = \frac{\epsilon_6}{2}$, where $ \displaystyle \epsilon_6 := \frac{\min( \epsilon_1, \epsilon_2 \epsilon_3, \epsilon_5, s_1, s_2, s_3, s_4)}{1000} $. As $ \epsilon_1 > \epsilon_3 $, $ T: = T_s - \epsilon_3 \chi_s > T_s - \epsilon_1 \chi_s $ and hence, $ T - 3 \epsilon_2 \alpha  $ satisfies the cone condition on $ \tilde{Z} \cap \tilde{Y}^{c} $. We can therefore use Lemma \ref{regandglue} to glue the regularizations of $ T  $ and $ \Omega_{S,s} - \epsilon_3 \chi_s $ on the K\"ahler manifold $ (\tilde{Z}, \alpha) $ to obtain a metric $ \Omega_{\tilde{Z}, s} \in [\pi^{*}\Omega_0 + s\alpha - \epsilon_3 \chi_s] $ satisfying the cone condition $ C_{\chi_s, \theta}^{m} $ on $ \tilde{Z} $. We extend $ \Omega_{\tilde{Z}, s} $ to a metric $ \displaystyle \Omega_{U,s} = \pi^{*}\Omega_0 + s\alpha - \epsilon_3 \chi_s +  \sqrt{-1} \partial \bar{\partial} \phi_U $ on a neighborhood of $ \tilde{Z} $ such that the $ \Omega_{U,s} $ satisfies $ C_{\chi_s, \theta} $ on $ U $. \\
	
	Suppose $ \displaystyle \Omega_{E, s} = \pi^* \Omega_0 + s \alpha +  \sqrt{-1} \partial \bar{\partial} \phi_E $ on $ U_E $. Let $ C_1 > 0 $ be large enough as to imply $ \displaystyle (\epsilon_3 - \epsilon_6) \chi_s >  \sqrt{-1} \frac{\partial \bar{\partial} \log |s_E|}{C_1} $ on $ E^c $ (on $ E^c $, $ \displaystyle  \sqrt{-1} \partial \bar{\partial} \log|s_E|^2 = F$ for some smooth and bounded form $ F $, so such a $ C_1 $ exists), then $ \displaystyle \pi^* \Omega_0 + s \alpha - \epsilon_6 \chi_s +  \sqrt{-1} \partial \bar{\partial} (\phi_U + \frac{\log |s_E|}{C_1} ) $ satisfies the cone condition on $ U \cap E^c $. If we define $ \displaystyle \phi_O := \tilde{\max}(\phi_E, \phi_U + \frac{ \log|s_E|}{C_1} + C_2) $ for some large constant $ C_2 $, then $ \phi_O $ will be smooth on a neighborhood $ O $ of $ \tilde{Z} \cup E $ ($ \phi_O $ coincides with $ \phi_E $ near $ E $) and by the choice of $C_1$, $ \displaystyle \Omega_O = \pi^* \Omega_0 + s \alpha - \epsilon_6 \chi_s +  \sqrt{-1} \partial \bar{\partial} \phi_O $ satisfies the cone condition on $ O $. \\
	
	Next, we let $$ \phi_f = \phi_O + (s  - \epsilon_6 s^N)c \log |s_E|^{2}. $$ As $ \alpha = \pi^{*} \chi + c  \sqrt{-1} \partial \bar{\partial} \log |s_E|^{2} $ outside $ E $ and as $ s < \epsilon_6  $, we see that $ \pi^*\Omega_0 +  \sqrt{-1} \partial \bar{\partial} \phi_f  $ satisfies the cone condition $ C_{\chi_s, \theta} $ and hence $ C_{\pi^* \chi, \theta} $ outside $ E $. The push-forward $ \phi $ of $ \phi_f $ is then such that $ \displaystyle \Omega_1 := \Omega_0 +  \sqrt{-1} \partial \bar{\partial} \phi $ satisfies the cone condition $ C_{\chi, \theta} $ on a neighborhood of $ Z \cap Z_{sing}^c $. As $ dim(Z_{sing}) < dim(Z) $, we can obtain a metric satisfying the cone condition in a neighborhood of $ Z_{sing} $ by the induction hypothesis. This metric glued with $ \Omega_1 $ then gives an $ \Omega \in [\Omega_0] $ satisfying the cone condition $ C_{\chi, \theta} $ in a neighborhood of $ Z $ (as $ \phi $ has a logarithmic singularity near $ Z_{sing} $, the regularized maximum of $ \phi $ and the other potential will be equal to that potential in a neighborhood of $ Z_{sing} $), thus completing the proof of Proposition \ref{mainnbdcone} and hence Theorem \ref{maindhymthm}.

\printbibliography

\end{document}